\numberwithin{equation}{section}
\newcommand\KK{{{K}}}
\newcommand{\RR} {\mathbb R}
\newcommand{\CC} {\mathbb C}
\newcommand{\ZZ} {\mathbb Z}
\newcommand{\NN} {\mathbb N}
\newcommand{\kk} {\mathbb K}
\newcommand{\Cal} {\mathcal}
\newcommand{\la} {\langle}
\newcommand{\ra} {\rangle}
\newcommand{\beq} {\begin{equation}}
\newcommand{\eeq} {\end{equation}}
\newtheorem{theorem}{Theorem}[section]
\newtheorem{remark}[theorem]{ Remark}
\newtheorem{corollary}[theorem]{Corollary}
\newtheorem{proposition}[theorem]{Proposition}
\newtheorem{lemma}[theorem]{Lemma}
\newtheorem{definition}[theorem]{Definition}
\begin{document}
%\title[Structure spaces of rings of convergent power series ]{ On structure spaces of rings of convergent power series}
\title[Structure spaces of rings of analytic functions ]{ Discrete $z$-filters and rings of analytic functions}
\author{Bedanta Bose and Mayukh Mukherjee}

\address{St. Xavier's College\\ 30, Mother Teresa Sarani\\ Kolkata - 700099\\ India}
\email{ana\_bedanta@yahoo.com}

\address{Max Planck Institute for Mathematics\\ Vivatsgasse 7\\ 53111 Bonn,
\\ Germany}

\email{mukherjee@mpim-bonn.mpg.de}
\subjclass[2010]{54D35, 30H50}
\begin{abstract} Consider rings of single variable real analytic or complex entire functions, denoted by $\kk\la z\ra$. %$\kk\la z\ra$ of convergent power series of a single real or complex variable and with real/complex coefficients (for example, rings of real analytic or complex entire functions). 
	We study ``discrete $z$-filters'' on $\kk$ and their connections with the space of maximal ideals of $\kk\la z\ra$, which we characterize as a compact $T_1$ space $\theta \kk$ of discrete $z$-ultrafilters on $\kk$. We show that $\theta\kk$ is a bijective continuous image of %a special subspace of  the Stone-\v{C}ech compactification 
$\beta \kk \setminus Q(\kk)$, where $Q(\kk)$ is the set of far points of $\beta\kk$. $\theta \kk$ turns out to be the Wallman compactification of the canonically embedded image of $\kk$ inside $\theta\kk$. Using our characterization of $\theta\kk$, we derive a Gelfand-Kolmogorov characterization of maximal ideals of $\kk\la z\ra$ and show that the Krull dimension of $\kk\la z\ra$ is at least $c$. We also establish the existence of a chain of prime $z$-filters on $\kk$ consisting of at least $2^c$ many elements. %A corollary is that $\CC\langle z\rangle$ (ring of complex entire functions) and $\RR\langle x\rangle$ (ring of real analytic functions) are not isomorphic. %subspace of $\beta \kk$, the Stone-\v{C}ech compactification of $\kk$. 
%We also prove that the extensions of real and complex analytic functions (of several variables) on $\RR^n$ and $\CC^n$ respectively are point-separating on $\beta \RR^n$ and $\beta \CC^n$ respectively.
\end{abstract}
\maketitle
\section{Introduction}
For an arbitrary topological space $X$, let $C^*(X)$ be the ring of bounded continuous functions on $X$.  One has a canonical map $\mathcal{M}: X \rightarrow \mathfrak{M}(C^*(X))$, the space of all maximal ideals of $C^*(X)$ by taking $x$ to the maximal ideal $\mathfrak{m}_x$ of all functions vanishing at $x$.  This map is continuous when the codomain is given the so-called Stone topology, which we describe briefly:
for any $f \in C^*(X)$, define
\beq
M(f) = \{M \in \mathfrak{M}(C^*(X)) : f \in M\}.
\eeq
These sets $M(f)$ define a base for the closed sets on $\mathfrak{M}$. The following facts are well-known (see ~\cite{GJ}):
\begin{enumerate}
\item $\mathfrak{M}(C^*(X))$ is compact in the Stone topology.
\item For a Tychonoff $X, \mathfrak{M}(C^*(X))$ is (homeomorphic to) the well-known Stone-\v{C}ech compactification of $X$.
\end{enumerate}

As long we are on $\RR^n$, where the notion of smoothness %and real analyticity 
makes sense, one can prove that the structure space of bounded real-valued smooth functions on $\RR^n$ is also $\beta\RR^n$. One must, however, be careful with further extensions: for instance, by a result of Grauert ( ~\cite{Gr}), we know that real analytic functions are dense in continuous functions for the Whitney $C^0$-topology for any paracompact real analytic manifold. However, the zero sets of real analytic functions have zero interior, meaning that the ring of real analytic functions has no divisor of zero. Consequently, the structure space of the ring of real analytic functions on $\RR^n$ will not generate the Stone-\v{C}ech compactification of $\RR^n$; in fact, it will not even be Hausdorff.

In this paper, one of our principal aims is to give a unified treatment of the structure spaces of the following rings: 
\begin{enumerate}
\item Ring of single variable complex entire functions %Ring of convergent power series in one complex variable with complex coefficients, 
denoted by $\CC\la z\ra$. %(this is the ring of single variable complex entire functions).
\item Ring of convergent power series in one complex variable with strictly real coefficients, denoted by $\CC\la x\ra$\footnote{The proper notation for this ring should have been $\RR\la z\ra$ instead. The reason behind this notational idiosyncrasy will become clear later on. Just for the sake of clarity, by $\CC\langle x\rangle$ we are referring to power series of the form $\Sigma_{n = 0}^\infty a_nz^n$, where $a_n \in \RR, z \in \CC$.}.
\item Ring of single variable real analytic functions, which are given locally by convergent power series in one real variable with real coefficients, denoted by $\RR\la x\ra$. 
\end{enumerate}

In general, unless we need to specialize, we will refer to all the three collectively as $\kk\la z\ra$, where $\kk$ stands for the field $\RR$ or $\CC$, as the case may be. Also, observe that $\RR\la x\ra$ is different from the ring of real entire functions, which are functions of the form $\sum_{0}^{\infty}a_nx^n$, $a_i \in \RR$, where $\lim_{n \to \infty}|a_n|^{1/n} = 0$.

To our knowledge, not too much is known regarding the structure space of the ring of entire functions of a single complex variable. Some of the available literature also concentrates on bounded holomorphic functions defined on the open unit disc. Investigations into the variants of such spaces started quite early, some of the relatively older literature are ~\cite{A}, ~\cite{H}, ~\cite{He}, ~\cite{Kr}, ~\cite{R}, ~\cite{W}, to name a few. However, there is also evidence of contemporary interest into such questions, as can be gathered from ~\cite{G}, ~\cite{GH}, ~\cite{P}, and references therein. We observe right off the start that the structure space in our case is not going to be Hausdorff, as $\kk\la z\ra$ does not contain any divisors of zero, and hence is an integral domain. %Our main result in this direction is Theorem \ref{continuous image}. 
The novelty of this paper is then to look at rings of unbounded functions producing non-Hausdorff compactifications, in contrast with much of the earlier literature.

En route our investigation of structure spaces of $\kk\la z\ra$, we are led to examine zero sets of analytic functions, and the so-called discrete $z$-filters formed they form; roughly speaking, these are filters consisting of closed discrete sets of $\kk$. Dual to such filters is the notion of so-called discrete $z$-ideals of $\kk\la z\ra$ (see Definition \ref{dzi}). We are led to establishing several properties of discrete prime $z$-filters  and discrete $z$-ultrafilters. Not surprisingly, they turn out to be dual to prime and maximal $z$-ideals respectively. It also turns out to be a special property of $\kk\la z\ra$ that all ideals are discrete $z$-ideals. 

%Due to the analytic functions having rather restricted zero sets, %very different from continuous or smooth functions. we need to heavily modify 
%Heuristically, existing methods of analysing Hausdorff compactifications which arise from rings of continuous functions largely exploit the correspondence between closed sets in $X$ and zero sets of functions in $C(X)$; since the zero sets of analytic functions are rather restricted, such methods of analysis need to be heavily modified in our setting. Accordingly, we define the concept of discrete $z$-filters, which are, roughly speaking, filters consisting of closed discrete subsets of $\kk$ (which are the zero sets of analytic functions), and link them to certain ideals apriori called ``discrete $z$-ideals'' in $\kk\la z\ra$ (see Definition \ref{dzi}). It turns out to be a special property of $\kk\la z\ra$ that all ideals are discrete $z$-ideals. 

Our main results in Section \ref{sec2} are the following. In Proposition \ref{peum}, we establish that every prime ideal extends uniquely to a maximal ideal in $\kk\la z\ra$, which finally falls in line with the established paradigm, namely, establishing a correspondence between each maximal ideal of $\kk\la z\ra$ and a ``discrete $z$-ultrafilter'' on $\kk$. In Proposition \ref{peum1}, we demonstrate that every fixed prime ideal in $\kk\la z\ra$ is a maximal ideal. More importantly, in Theorem \ref{2^{c} prime}, we are able to establish the existence of a chain of prime $z$-filters on $\kk$ consisting of at least $2^c$ elements. This extends results in ~\cite{M} and ~\cite{P}, see the end of Section \ref{sec2} for a discussion. The proof makes use of Henriksen's well-known result that the Krull dimension of $\kk\la z\ra$ is at least $2^c$. 

In Section \ref{sec3}, we define $\theta \kk$ (see Definition \ref{thetake}), the space of discrete $z$-ultrafilters on $\kk$, which is our candidate for the structure space of $\kk\la z\ra$; we demonstrate a homeomorphism between $\theta\kk$ and $\mathfrak{M}(\kk\la z\ra)$ in Theorem \ref{hwtk}%, which makes $\theta\kk$ a $T_1$ compactification of $\kk$
. Since $\theta\kk$ consists of discrete $z$-ultrafilters, heuristically speaking, it seems reasonable that it will not ``reach'' any far point of $\beta \kk$. We make this heuristic precise in Theorem \ref{bci}, where we explicitly construct a continuous bijective map $\Psi$ from $\beta\kk \setminus Q(\kk)$ to $\theta\kk$. 

%Now, let $\eta_\kk(\kk) : \kk \to \theta\kk$ denote the canonical embedding. 
%The rings $\CC\la z\ra$ and $\RR\la x\ra$ are trivially non-isomorphic, but that does 
Now, let $\eta_\kk : \kk \to \theta\kk$ denote the canonical embedding (see Subsection \ref{3.2}). %Using Theorem \ref{bci}, we give a new proof of Henriksen's well-known result that the Krull dimension of $\kk\la z\ra$ is at least $2^c$; this is Theorem \ref{Krull}. 
In Theorem \ref{max}, we derive a characterization of maximal ideals of $\kk\la z\ra$ in the fashion of the Gelfand-Kolmogorov theorem. % and as an easy application, we show that the maximal ideals in $\RR\la x\ra$ are formally real. 
A non-trivial application of this characterization is Theorem \ref{krull}, which gives a method of somewhat explicitly constructing a chain of prime ideals of $\kk\la z\ra$ from a chain of neighbourhoods of a point $p \in \theta\kk \setminus \eta_\kk(\kk)$. As a corollary, this gives that the Krull dimension of $\kk\la z\ra$ is at least $c$ (Corollary \ref{krull1}). %Of course, it is a well-known result of Henriksen that the Krull dimension of $\CC\la z\ra$ is at least $2^c$. 
It seems tempting to speculate that the proof of Corollary \ref{krull1} might be extended to give a different proof of Henriksen's well-known result about the Krull dimension of $\kk\la z\ra$ (this will be independent of Theorem \ref{2^{c} prime}). 

In Theorem \ref{ska}, we prove that $\theta\kk$ is the Wallman compactification of $\eta_\kk(\kk)$. %It will be an interesting question to study whether the spaces $\theta\RR$ and $\theta\CC$ are homeomorphic. 
It is clear that $\CC\la z\ra$ and $\RR\la x\ra$ are trivially non-isomorphic ($-1$ belongs to both rings, has a square root in $\CC\la z\ra$, and not in $\RR\la x\ra$). But, curiously enough, we are able to show via rather elementary arguments that $\eta_\RR(\RR)$ and $\eta_\CC(\CC)$ are homeomorphic, which implies that $\theta \RR$ and $\theta \CC$ are homeomorphic. In the context of $\beta X$, one can produce examples of non-isomorphic continuous function rings with homeomorphic structure spaces; for example, for a Tychonoff space $X$, any ring between $C^*(X)$ and $C(X)$ has structure space $\beta X$. %, but there are many instances where the structure spaces of such rings differ. 
Here we encounter a similar example for Wallman compactifications.

%As a side note, it is also known that bounded continuous functions on a Tychonoff space $X$ separate points in $\beta X$. It is quite natural to ask (and has been, see ~\cite{K} for example) whether the real (complex) analytic functions defined on $\RR^n$ ($\CC^n$, respectively), when extended to $\beta \RR^n$ ($\beta \CC^n$ respectively) are point separating. Of course, such functions being unbounded, they (and their extensions to $\beta \RR^n$ or $\beta\CC^n$) are now taken to be $\CC\mathbb{P}^1$-valued. %Of course, an answer to this question is not entirely obvious as the functions (and their extensions) are now $\CC\mathbb{P}^1$-valued. 
%We aim to address these questions as well as some results on Stone-\v{C}ech compactifications arising from smooth functions in $\mathbb{R}^n$ in a forthcoming paper.

{\bf Note:} In this paper, some of the results (notably, the existence of far points of $\beta X$ and the proof of Theorem \ref{2^{c} prime}) depend on the Continuum Hypothesis.

\subsection{Organization of the paper} In Subsection \ref{ssec21}, we fix our notations, and collect together some background material and also the main technical lemmas about discrete $z$-ideals and filters. In Subsection \ref{ssec22}, we prove our main results for Section \ref{sec2}, which are Propositions \ref{peum} and \ref{peum1} and Theorem \ref{2^{c} prime}. In Subsection \ref{ssec31}, we make a brief discussion of far points of $\beta \kk$ and their connections to discrete $z$-filters. Our main results for Section 3 are contained in Subsections \ref{3.1} and \ref{3.2}. There are respectively Theorems \ref{hwtk}, \ref{bci}, \ref{max}, \ref{krull}, Corollary \ref{krull1}, and Theorem \ref{ska}. We finish the paper with an Appendix containing some assorted results about $\theta\kk$ and the map $\Psi$. Though not indispensable for our main results, we believe them to be of independent interest.  %we fix our notations and define our most important technical tools: discrete $z$-filters (Definition \ref{dzf}) and discrete prime $z$-filters (Definition \ref{dpzf}), and prove some technical lemmas about their connections with the so-called discrete $z$-ideals (Definition \ref{dzi}), ending with a proof of the fact that every prime ideal of $\kk\la z\ra$ is contained in a unique maximal ideal of $\kk\la z\ra$ (Proposition \ref{peum}). In Section 3, we take up the investigation of the structure space $\mathfrak{M}(\kk\la z\ra)$. %We define $\theta \kk$, which is an indexed set of discrete $z$-ultrafilters of $\kk$. 
\section{(Discrete) $z$-ideals and (discrete) $z$-filters}\label{sec2}
\subsection{Notations, definitions and technical lemmas}\label{ssec21}
Before we begin, let us standardize our notations for the rest of the paper (we make a complete list, at the risk of a little repetition):
\begin{enumerate}
\item  $C^*(X)$ : bounded continuous functions on $X$.
\item $\mathfrak{M}$ : space of maximal ideals.
\item $\kk\la z\ra$ : convergent power series in the variable $z$ with coefficients belonging to $\kk$.
\item $\Cal{D}$ : discrete set.
\item $Z_f$ : zero set of the function $f$.
\item $\beta X$ : Stone-\v{C}ech compactification of $X$.
\item $c$ : cardinality of $\RR$.
\item $\mathfrak{F}_p$ : the $p$ in the {\em subscript} denotes a filter $\mathfrak{F}$ fixed or supported at $p$.
\item $\text{cl}_X(Y)$ : the closure of $Y$ in $X$.
\item $I$ : a proper ideal of $\kk\la z\ra$.
\item $\la \Cal{F}\ra$: $z$-filter generated by $\Cal{F}$.
\item $\la \Cal{F}\ra^\uparrow$ : $z$-ultrafilter containing $\la \Cal{F}\ra$.
\end{enumerate}
At the outset, we note the following well-known facts which will be crucially used throughout the paper. Firstly, we have the following:
\begin{lemma} \label{impa}
Given any discrete set (with multiplicities) $\Cal{D}$ of $\kk$ ($\kk = \RR$ or $\CC$), by the Weierstrass factorization theorem, we can find an $f \in \CC\langle z\rangle$ and $\RR\langle x\rangle$ respectively such that $\Cal{D} = Z_f$. If $D$ is symmetric about the $x$-axis, then we can find $f \in \CC\langle x\rangle$ such that $\Cal{D} = Z_f$.
\end{lemma}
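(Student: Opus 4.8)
The plan is to invoke the Weierstrass/Hadamard factorization theorem in each of the three settings, handling the archimedean subtlety (real coefficients versus complex coefficients) by a symmetrization argument. First I would recall the statement in the simplest case: given a sequence of points $(a_n)$ in $\CC$ with $|a_n|\to\infty$ (listed with the desired multiplicities), the infinite product $f(z)=z^{m}\prod_{n}E_{p_n}(z/a_n)$, formed with Weierstrass elementary factors $E_p$ and suitable exponents $p_n$ chosen so the product converges locally uniformly, defines an entire function with $Z_f=\Cal{D}$ exactly (here $m$ accounts for a prescribed multiplicity at the origin). This gives the claim for $\CC\la z\ra$. A closed discrete subset of $\CC$ is at most countable and has no accumulation point in $\CC$, so $|a_n|\to\infty$ automatically, and the hypotheses of the factorization theorem are met; when $\Cal{D}$ is finite one simply takes a polynomial.

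Next I would treat $\RR\la x\ra$, the ring of germs at (say) $0$, or more precisely single-variable real analytic functions: here a discrete set $\Cal{D}\subset\RR$ is again countable, closed and discrete in $\RR$, hence its points tend to $\pm\infty$, and the real Weierstrass product $\prod_n(1-x/a_n)$ with appropriate convergence factors has real coefficients and vanishes precisely on $\Cal{D}$. So this case reduces to the real-variable version of the same theorem. For the third assertion, about $\CC\la x\ra$ (power series $\sum a_n z^n$ with $a_n\in\RR$ but $z$ ranging over $\CC$): if $\Cal{D}$ is symmetric about the real axis, group its points into conjugate pairs $\{a_n,\bar a_n\}$ together with the genuinely real points, and form the Weierstrass product using the conjugate-invariant factors $E_{p_n}(z/a_n)E_{p_n}(z/\bar a_n)$ for the pairs and $E_{p_n}(x/a_n)$ for the real points. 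The resulting entire function $f$ satisfies $\overline{f(\bar z)}=f(z)$, which forces all its Taylor coefficients at $0$ to be real; hence $f\in\CC\la x\ra$ in the paper's notation, and by construction $Z_f=\Cal{D}$.

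The only genuine point requiring care — and the step I would flag as the main (mild) obstacle — is the bookkeeping of the convergence exponents $p_n$ in the $\CC\la x\ra$ case, to be sure that choosing them equal on a conjugate pair (and compatible with the real points) still yields a convergent product; this is immediate since convergence only depends on the moduli $|a_n|$, which are equal for $a_n$ and $\bar a_n$, so any admissible choice for one works for both. One should also note the phrase ``with multiplicities'': if a point of $\Cal{D}$ is to be a zero of order $k$, one repeats the corresponding factor $k$ times (and in the conjugation-symmetric case the multiplicities at $a_n$ and $\bar a_n$ must agree, which is part of what ``symmetric about the $x$-axis'' should be taken to mean). With these remarks the lemma follows directly from the classical factorization theorems, so I would keep the written proof to a few lines, citing the standard references rather than reproducing the construction of the elementary factors.
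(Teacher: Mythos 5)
Your proposal is correct and follows exactly the route the paper intends: the lemma is stated with no written proof beyond the phrase ``by the Weierstrass factorization theorem,'' and your argument simply fleshes out that citation, including the conjugate-symmetrization needed for the $\CC\la x\ra$ case and the observation that a closed discrete subset of $\RR$ remains closed and discrete in $\CC$. The bookkeeping of multiplicities and of the convergence exponents on conjugate pairs is handled correctly, so nothing further is needed.
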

Secondly, we prove that $\kk\la z\ra$ is a gcd domain. The cases $\CC\la z\ra$ and $\CC\la x\ra$ are already contained in the proof of Theorem 9 in ~\cite{H}. Here we modify Helmer's method to prove the corresponding statement for $\RR\la x\ra$:
\begin{lemma}\label{impb}
Consider a non-trivial proper ideal $I$ of $\RR\la x\ra$. If $f, g \in I$, then the g.c.d. of $f$ and $g$ is also in $I$.
\end{lemma}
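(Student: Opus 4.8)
The plan is to adapt Helmer's argument from the proof of Theorem 9 in~\cite{H} to the real-coefficient setting, the essential point being to produce, out of a Bézout-type identity valid modulo the zero sets, a genuine element of the ideal realizing the gcd. First I would record the structure of elements of $\RR\la x\ra$: each nonzero $f$ is, up to a unit of the local ring, determined by its zero divisor $Z_f$ (a discrete set with multiplicities, symmetric about the real axis since the coefficients are real), via Lemma~\ref{impa}. So given $f,g\in I$, let $d$ be a function whose zero set is the ``intersection with multiplicities'' of $Z_f$ and $Z_g$; by Lemma~\ref{impa} (applied to the symmetric discrete set thus obtained) we may take $d\in\RR\la x\ra$, and $d\mid f$, $d\mid g$ in $\RR\la x\ra$. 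It remains to show $d\in I$.

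The key step is the following: write $f=d f_1$, $g=d g_1$ with $f_1,g_1\in\RR\la x\ra$ having \emph{no common zero}. I claim that two such functions generate the unit ideal in $\RR\la x\ra$, i.e.\ there exist $a,b\in\RR\la x\ra$ with $a f_1 + b g_1 = 1$; granting this, $d = d(af_1+bg_1) = a(df_1) + b(dg_1) = af + bg \in I$, and we are done. To prove the claim I would follow Helmer: since $f_1,g_1$ have disjoint zero sets, the sum $f_1\bar f_1 + g_1\bar g_1$ (or just $f_1^2+g_1^2$ if one wants to stay manifestly real — but note this can vanish at complex conjugate pairs, so one must be careful) has a controlled zero set; more robustly, one uses that $\RR\la x\ra$, being the ring of global real-analytic functions on $\RR$ (resp.\ the real-coefficient convergent power series ring), admits solutions to the Bézout equation whenever the ideal $(f_1,g_1)$ is contained in no maximal ideal — and $(f_1,g_1)$ lies in no \emph{fixed} maximal ideal precisely because $f_1,g_1$ have no common zero. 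Handling the free maximal ideals is exactly the subtlety that makes this a lemma rather than a triviality: here one invokes, as Helmer does, an interpolation/Mittag-Leffler construction to build $a,b$ explicitly as convergent series, using that the zero set of $f_1$ is discrete so one can prescribe the values of $b$ at the zeros of $f_1$ (namely $b = 1/g_1$ there, which makes sense as $g_1\neq 0$ on $Z_{f_1}$) with prescribed jet data matching the multiplicities, and similarly for $a$.

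The main obstacle I anticipate is precisely this last interpolation step in the \emph{real}-analytic / real-coefficient category: over $\CC$ the Weierstrass and Mittag-Leffler theorems hand you the needed $a,b$ directly, but over $\RR$ one must ensure the interpolating function can be chosen with real coefficients (resp.\ real-analytic and real-valued on $\RR$), which forces the interpolation data to be symmetric under complex conjugation. Since $f_1$ and $g_1$ already have conjugation-symmetric zero sets and real Taylor coefficients, the prescribed values $1/g_1$ on $Z_{f_1}$ are automatically conjugation-equivariant, so a symmetric Mittag-Leffler construction produces a real solution; making this precise — and in the $\RR\la x\ra$ case checking the resulting series actually has positive radius of convergence, equivalently defines a germ/global real-analytic function — is the technical heart. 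A secondary, more routine point is bookkeeping the multiplicities: one needs $d$ to be the \emph{greatest} common divisor, so $f_1,g_1$ must be genuinely coprime at every point including in jets, and the Bézout identity must be set up with enough vanishing of $a,b$ to respect this; this is handled by the standard device of passing to $f_1/u$, $g_1/u$ only after extracting the full common factor $u=d$, so that disjointness of zero sets is exact.
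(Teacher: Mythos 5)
Your proposal is correct and follows essentially the same route as the paper: both reduce the statement to producing a B\'ezout identity $1=\alpha f_1+\beta g_1$ for coprime elements (so that $d=\alpha f+\beta g\in I$), and both obtain it by Helmer's device of a Mittag--Leffler-type interpolation at the discrete zero set, made real-analytic by conjugation symmetry (the paper takes the real part of a complex meromorphic solution on a neighbourhood of $\RR$). The only cosmetic difference is that you prescribe the jets of $\beta$ at the zeros of $f_1$ to match $1/g_1$, whereas the paper prescribes principal parts of a meromorphic $M$ at the zeros of $g$ so that $g\mid 1-(Mg)f$; these are the same construction.
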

\begin{proof}
We see that it suffices to prove that the g.c.d. $d$ of $f$ and $g$ can be expressed as $d = \alpha f + \beta g$, where $\alpha, \beta \in \RR\la x\ra$. By considering $\frac{f}{d}$ and $\frac{g}{d}$, it suffices to prove that if $f, g \in \RR\la x\ra$ are relatively prime (that is, g.c.d.$(f, g) = 1$), then there are $\alpha, \beta \in \RR\la x\ra$ such that, 
\[
1 = \alpha f + \beta g.\]

Since $f$ and $g$ cannot vanish at $0$ simultaneously (otherwise $h$ given by $h(x) = x$ would be a common factor of $f$ and $g$), assume without loss of generality that $g (0) \neq 0$. Also, let $g$ have the zeros $p_1, p_2,...$ with multiplicities $m_1, m_2,...$. Letting $p$ stand for any one of the $p_i$'s occurring with multiplicity $m$, expand $fg$ in a power series locally around $p$:
\[
f(x)g(x) = w_m(x - p)^m + w_{m + 1}(x - p)^{m + 1} + ....
\]
with $w_m \neq 0$. %Now, using Mittag-Leffler theorem (see Chapter VIII, Theorem 3.2 of ~\cite{Co}), consider a meromorphic function $M(z)$ whose only poles are of order $m_i$ at $p_i$. Calling any of those points $p_i$ as $p$, let the singular part of $M(z)$ at $p$ be equal to 

We wish to find  a real-valued ``meromorphic'' function $M(x)$ of one real variable which has a single real variable Laurent series expansion such that its only ``poles'' are of order $m_i$ at $p_i$ with the singular part at the generic pole $p$ being equal to \[
c_m(x - p)^{-m} + ...  + c_1(x - p)^{-1},\]
where $c_i \in \RR$. Since $g$ is real analytic, around any point $x_0 \in \RR$, it has a power series expansion $g(x) = \sum^\infty_{n = 0} \frac{g^{(n)}(x_0)}{n!}(x - x_0)^n$. Observe that the function $g(z) := \sum^\infty_{n = 0} \frac{g^{(n)}(x_0)}{n!}(z - x_0)^n$ is a local holomorphic extension of $g$, defined in an open domain $\Omega$ containing $\RR$. %Complexifying $g$, that is, by considering the complex convergent power series $g(z)$, 
Now, choose in $\Omega$ (by the Mittag-Leffler theorem), a meromorphic function $M(z)$. This means, from the expansion $f(z)g(z) = w_m(z - p)^m + w_{m + 1}(z - p)^{m + 1} + ....$ near $p$ (note that the coefficients $w_i$ are real-valued), we determine real numbers $c_1, c_2,..., c_m$ in the following way:
\begin{align*}
w_mc_m & = 1,\\
w_mc_{m - 1} + w_{m + 1}c_m & = 0,\\
...\text{   }.....\text{   }.....& \text{  }.....\\
w_mc_1 + w_{m + 1}c_2 + .... + w_{2m - 1}c_m & = 0.
\end{align*} 
Now, define $M'(x) = \text{Re } M(x)$, where $M(x)$ represents the restriction of the function $M(z)$ to $\RR$. This  will serve as the meromorphic function we were looking for, and $g$ will be a divisor of $1 - \alpha f$, where $\alpha (x) = M'(x)g(x)$. This concludes the proof.  
\end{proof}
\subsubsection{Preliminary lemmas}
Now we define the concept of discrete $z$-filters, arguably the main technical tool for our investigation. Denote by $\Cal{D}(\kk)$ the collection of all closed discrete sets inside $\kk$ and also containing the special member $\kk$.
\begin{definition}\label{dzf} A collection $\mathcal{F}$ of subsets of $\Cal{D}(\kk)$ is called a discrete $z$-filter on $\kk$ if 
\begin{enumerate}
\item $\emptyset\notin \mathcal{F}$.
\item if $D_{1}, D_{2} \in \mathcal{F}$ then $D_{1}\cap D_{2} \in \mathcal{F}$.
\item if $D_{1}\in \mathcal{F}$ and $D\in \Cal{D}(\kk)$ such that $D_{1}\subseteq D$ then $D\in \mathcal{F}$.
\end{enumerate}
\end{definition}
%\textbf{Convention:} An ideal $I$ of $\kk\la z\ra$ will always stand for a proper ideal of $\kk\la z\ra$. 

Now we quickly write down the following 
\begin{lemma}
(a) If $I$ is an ideal of $\kk\la z\ra$ then $Z[I]=\{Z_f : f\in I\}$ is a discrete $z$-filter on $\mathbb{K}$.\newline
(b) If $\mathcal{F}$ is a discrete $z$-filter on $\mathbb{K}$, then $Z^{-1}[\mathcal{F}]=\{f\in \kk\langle z\rangle : Z_f\in \mathcal{F}\}$ is an ideal of $\kk\la z\ra$.
\end{lemma}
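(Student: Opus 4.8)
The plan is to verify the defining axioms directly, treating (a) and (b) as the two halves of the standard $z$-ideal/$z$-filter dictionary from the theory of rings of continuous functions (cf. \cite{GJ}), adapted to $\kk\la z\ra$. The only inputs that are not purely formal will be Lemma \ref{impa} (Weierstrass factorization, used to realize prescribed closed discrete sets as zero sets) and the Bézout property of $\kk\la z\ra$ recorded in Lemma \ref{impb} (and, for the complex cases, in \cite{H}).

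For part (a) I would go axiom by axiom. Since $I$ is proper it contains $0$, so $\kk=Z_0\in Z[I]$, which also shows $Z[I]$ is a nonempty subcollection of $\Cal{D}(\kk)$ (for $f\neq 0$ the zeros of $f$ cannot accumulate, hence $Z_f$ is closed and discrete). For axiom (1): if some $f\in I$ had $Z_f=\emptyset$, then $f$ would be nowhere vanishing, so $1/f\in\kk\la z\ra$ and $f$ would be a unit, contradicting that $I$ is proper. For axiom (2): given $f,g\in I$, set $d=\gcd(f,g)$; by Lemma \ref{impb} (and \cite{H}) we have $d\in I$, and since $d$ vanishes precisely at the common zeros of $f$ and $g$, one gets $Z_f\cap Z_g=Z_d\in Z[I]$. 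For axiom (3): given $f\in I$ with $Z_f\subseteq D$ for some $D\in\Cal{D}(\kk)$, use Lemma \ref{impa} to pick $h\in\kk\la z\ra$ with $Z_h=D$; then $fh\in I$ and $Z_{fh}=Z_f\cup Z_h=Z_f\cup D=D$, so $D\in Z[I]$.

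For part (b) I would check the three ideal axioms. Since a filter is nonempty, picking any $D\in\mathcal{F}$ and applying axiom (3) with $D\subseteq\kk\in\Cal{D}(\kk)$ gives $\kk=Z_0\in\mathcal{F}$, so $0\in Z^{-1}[\mathcal{F}]$ and $Z^{-1}[\mathcal{F}]\neq\emptyset$. If $f,g\in Z^{-1}[\mathcal{F}]$, then $Z_f,Z_g\in\mathcal{F}$, hence $Z_f\cap Z_g\in\mathcal{F}$ by axiom (2); since $f+g$ vanishes wherever both $f$ and $g$ do, $Z_{f+g}\supseteq Z_f\cap Z_g$, and $Z_{f+g}\in\Cal{D}(\kk)$ (it is closed discrete, or all of $\kk$), so axiom (3) gives $Z_{f+g}\in\mathcal{F}$, i.e. $f+g\in Z^{-1}[\mathcal{F}]$. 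Finally, for $f\in Z^{-1}[\mathcal{F}]$ and any $h\in\kk\la z\ra$ we have $Z_{hf}=Z_h\cup Z_f\supseteq Z_f\in\mathcal{F}$, so axiom (3) again yields $hf\in Z^{-1}[\mathcal{F}]$.

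The only step that is more than bookkeeping is axiom (2) of part (a): the fact that $\gcd(f,g)\in I$ whenever $f,g\in I$ is exactly the point where the special arithmetic of $\kk\la z\ra$ — its being a gcd, indeed Bézout, domain — enters, and this is precisely the content of Lemma \ref{impb} (with \cite{H}). A secondary point to be careful about is the uniform treatment of the three rings: the ambient space underlying $\Cal{D}(\kk)$ is $\CC$ for $\CC\la z\ra$ and $\RR$ for $\RR\la x\ra$, while for $\CC\la x\ra$ one works with conjugation-symmetric data as in the Introduction's footnote; in each case Lemma \ref{impa} supplies exactly the auxiliary functions $h$ required in axiom (3).
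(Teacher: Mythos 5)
Your proof is correct and follows essentially the same route as the paper's: both reduce the intersection axiom to Lemma \ref{impb} via $Z_f\cap Z_g=Z_{\gcd(f,g)}$, handle the superset axiom by Weierstrass factorization and $Z_{fg}=Z_f\cup Z_g$, and verify the ideal axioms for $Z^{-1}[\mathcal{F}]$ through $Z_{f+g}\supseteq Z_f\cap Z_g$ and $Z_{hf}\supseteq Z_f$. The only difference is that you spell out the easy checks (that $\emptyset\notin Z[I]$ because a nowhere-vanishing element of $\kk\la z\ra$ is a unit, and that $Z_{f+g}$ may be all of $\kk$) which the paper dismisses as clear.
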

\begin{proof} (a) It is clear that $\emptyset \notin Z[I]$. Now for $f,g \in I$, $Z_f$ and $ Z_g$ are closed discrete subsets of $\kk$ and therefore $Z_f\cap Z_g$ is also closed and discrete subset of $\kk$. Now by Lemmas \ref{impa} and \ref{impb}, $Z_f\cap Z_g = Z_d$, where $d$ is the greatest common divisor of $f$ and $g$. Hence $d\in I$ (by Lemma \ref{impb}), and therefore $Z_f\cap Z_g\in Z[I]$. Again, let $Z \in \Cal{D}(\kk)$ and $f\in I$ such that $Z_f \subset Z$. Then, there exists $g\in \kk\la z\ra$ such that $Z=Z_g$ and then $Z=Z_f\cup Z_g = Z_{fg} \in Z[I]$. \newline
(b)  Let $f, g \in Z^{-1}[\mathcal{F}]$ then $Z_f,Z_g\in \mathcal{F} \Rightarrow Z_f\cap Z_g\subseteq Z_{f+g}\in \mathcal{F}$ %[by upset property of $\mathcal{F}$]
which finally implies that $f+g \in Z^{-1}[\mathcal{F}]$.\newline
Let $f\in Z^{-1}[\mathcal{F}]$ and $ g \in \kk\la z\ra$ then $Z_f\subseteq Z_f\cup Z_g=Z_{fg} \in \mathcal{F} \Rightarrow fg\in Z^{-1}[\mathcal{F}]$. %Other properties are trivial to prove.
\end{proof}
%By Hausdorff's maximal chain condition the set of all discrete closed $z$-filter with usual set inclusion contains a maximal chain. We call it discrete $z$-ultrafilter on $\mathbb{C}$.
Now, we see that a discrete $z$-filter $\Cal{F}$ on $\kk$ can be extended to a maximal discrete $z$-filter by Zorn's lemma, which we call a discrete $z$-ultrafilter on $\kk$. The following lemma is automatic:
\begin{lemma}\label{lmmbij}\label{bijct} If $M$ is a maximal ideal of $\kk\la z\ra$ then $Z[M]$ is a discrete $z$-ultrafilter on $\kk$. Conversely, if $\mathcal{F}$ is a discrete $z$-ultrafilter on $\kk$ then $Z^{-1}[\mathcal{F}]$ is a maximal ideal of $\kk\la z\ra$.
\end{lemma}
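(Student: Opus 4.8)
The plan is to deduce the lemma from the fact that $I \mapsto Z[I]$ and $\mathcal{F} \mapsto Z^{-1}[\mathcal{F}]$ are mutually inverse, inclusion-preserving bijections between the poset of proper ideals of $\kk\la z\ra$ and the poset of discrete $z$-filters on $\kk$. An isomorphism of partially ordered sets sends maximal elements to maximal elements; since a maximal ideal is precisely a maximal element of the poset of proper ideals, and a discrete $z$-ultrafilter is, by definition, a maximal element of the poset of discrete $z$-filters, both halves of the statement follow immediately.

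First I would check that the two operations land in the asserted posets. The preceding lemma already gives that $Z[I]$ satisfies the discrete $z$-filter axioms and that $Z^{-1}[\mathcal{F}]$ is an ideal, so it only remains to match ``proper'' with ``$\emptyset \notin \cdot$''. The units of $\kk\la z\ra$ are exactly the nowhere-vanishing functions, that is, the $f$ with $Z_f = \emptyset$; hence $I$ is proper if and only if $\emptyset \notin Z[I]$, and $Z^{-1}[\mathcal{F}]$ contains no unit --- so is a proper ideal --- precisely because $\emptyset \notin \mathcal{F}$. Both maps are visibly monotone for inclusion.

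The substance is then the pair of identities $Z^{-1}[Z[I]] = I$ and $Z[Z^{-1}[\mathcal{F}]] = \mathcal{F}$, in each of which one inclusion is a formality. For $Z^{-1}[Z[I]] \supseteq I$: if $f \in I$ then $Z_f \in Z[I]$, so $f \in Z^{-1}[Z[I]]$. For the reverse inclusion, suppose $Z_f = Z_g$ with $g \in I$ (zero sets taken with multiplicity); if $g = 0$ then $Z_f = \kk$ forces $f = 0 \in I$, and otherwise $f/g$ is a nowhere-vanishing analytic function, hence a unit of $\kk\la z\ra$, so $f = (f/g)\,g \in I$. This last point is exactly the assertion that every ideal of $\kk\la z\ra$ is a discrete $z$-ideal. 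Dually, $Z[Z^{-1}[\mathcal{F}]] \subseteq \mathcal{F}$ is immediate from the definitions, whereas for $Z[Z^{-1}[\mathcal{F}]] \supseteq \mathcal{F}$ one takes $D \in \mathcal{F}$ and uses Lemma \ref{impa} to produce $f \in \kk\la z\ra$ with $Z_f = D$, so that $f \in Z^{-1}[\mathcal{F}]$ and $D = Z_f \in Z[Z^{-1}[\mathcal{F}]]$. Together with monotonicity of the two maps this establishes the poset isomorphism, and the lemma follows.

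As for the main obstacle: there essentially is none --- this is why the lemma can be called automatic --- and the argument is pure bookkeeping once two ingredients are in place, namely Lemma \ref{impa} (every closed discrete subset of $\kk$, with prescribed multiplicities, is the zero set of some element of $\kk\la z\ra$) and the elementary identification of the units of $\kk\la z\ra$ with the nowhere-vanishing functions. The one place that rewards a little care is keeping ``proper ideal'' and ``filter omitting $\emptyset$'' aligned across the correspondence, which is exactly what the unit characterization provides.
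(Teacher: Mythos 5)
Your overall strategy (transfer maximality across an order-preserving correspondence between proper ideals and discrete $z$-filters) is in spirit the same as the paper's, which extends $Z[M]$ to a discrete $z$-ultrafilter $\Cal{F}$, uses monotonicity and the trivial inclusion $M\subseteq Z^{-1}[Z[M]]$ to get $M\subseteq Z^{-1}[\Cal{F}]$, and then invokes maximality. The matching of ``proper ideal'' with ``$\emptyset\notin\Cal{F}$'' via the unit characterization, and the identity $Z[Z^{-1}[\Cal{F}]]=\Cal{F}$ via Lemma \ref{impa}, are both correct and are exactly the ingredients the paper uses implicitly.

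The one step I would flag is your claim that $Z^{-1}[Z[I]]=I$ for \emph{every} proper ideal $I$, justified by saying that $Z_f=Z_g$ with $g\in I$ forces $f/g$ to be a unit. That argument is valid only if $Z_f$ is read as a divisor, i.e.\ with multiplicities; but the filters in Definition \ref{dzf} live on $\Cal{D}(\kk)$, a collection of \emph{sets}, and at the set level the claim fails: for $I=(z^2)$ in $\CC\la z\ra$ one has $Z_z=Z_{z^2}=\{0\}$, so $z\in Z^{-1}[Z[I]]$ while $z/z^2$ is not analytic and $z\notin I$. So the two maps are not mutually inverse on all proper ideals under the set-level reading, and your ``poset isomorphism'' is really only a monotone Galois connection. (The paper's own assertion that every ideal is a discrete $z$-ideal, Lemma \ref{dhopas}, is proved separately and \emph{after} this lemma, by a g.c.d.\ argument, precisely because the unit trick is not available.) Fortunately the lemma does not need the contested identity: maximality transfers already from the trivial inclusion $I\subseteq Z^{-1}[Z[I]]$, the identity $Z[Z^{-1}[\Cal{F}]]=\Cal{F}$, monotonicity, and properness --- e.g.\ if $Z[M]\subseteq\Cal{F}$ then $M\subseteq Z^{-1}[\Cal{F}]$, a proper ideal, so $M=Z^{-1}[\Cal{F}]$ and hence $\Cal{F}=Z[Z^{-1}[\Cal{F}]]=Z[M]$; and symmetrically for the converse. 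I would either rewrite the last step in that form or state explicitly that you are taking zero sets with multiplicity throughout, in which case your version goes through but diverges from the paper's set-level conventions.
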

\begin{proof} Since $Z[M]$ is a discrete $z$-filter on $X$, by Zorn's lemma, $Z[M]$ is contained in a discrete $z$-ultrafilter on $\Cal{F}$. Therefore $Z^{-1}[Z[M]] \subset Z^{-1}[\Cal{F}]$ which imply that $M \subset Z^{-1}[\Cal{F}]$. But $M$ is a maximal ideal so $M = Z^{-1}[\Cal{F}]$ and hence $Z[M] = \Cal{F}$. The converse can be checked similarly. %Let $I, J$ be ideals in $\kk\langle z\rangle$. It is clear that $I \subset J \implies Z[I] \subset Z[J]$. Similarly, if $\Cal{F}_1 \subset \Cal{F}_2$ are two discrete $z$-filters, it is clear that $Z^{-1}[\Cal{F}_1] \subset Z^{-1}[\Cal{F}_2]$.%Since $Z$ and $Z^{-1}$ preserve the inclusion, it follows immediately.
\end{proof}
%As a consequence of the above lemma we can conclude the following
%\begin{lemma}\label{bijct} The correspondence $Z \longleftrightarrow Z^{-1}$ gives a bijection between the set of all maximal ideals of $\kk\la z\ra$ and discrete $z$-ultrafilters of $\kk$.
%\end{lemma}
%\begin{lemma} If a discrete closed set $Z$ of $\mathbb{K}$ intersects every element of a discrete $z$-ultrafilter on $\kk$, %$\mathcal{F}$ of $\mathbb{K}\langle z\rangle$ 
%then $Z\in \mathcal{F}$.
%\end{lemma}
%\begin{proof}
% Since $Z$ intersects every element of $\mathcal{F}$, therefore $\mathcal{F}\cup \{Z\}$ generates a discrete $z$-filter which contains $\mathcal{F}$. But maximality of $\mathcal{F}$ implies that $\langle \mathcal{F}\cup \{Z\}\rangle =\mathcal{F}$. Hence $Z \in \mathcal{F}$.
%\end{proof}

\begin{definition} \label{dzi} An ideal of $\kk\la z\ra$ is called a discrete $z$-ideal if $Z^{-1}[Z[I]]=I$.
\end{definition}
The following is a rather special property of $\kk\la z\ra$:
\begin{lemma}\label{dhopas}
	Every ideal of $\kk\la z\ra$ is a discrete $z$-ideal.
\end{lemma}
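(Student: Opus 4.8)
The plan is to show both inclusions of $Z^{-1}[Z[I]] = I$. One inclusion is formal: for any ideal $I$, if $f \in I$ then $Z_f \in Z[I]$, hence $f \in Z^{-1}[Z[I]]$, so $I \subseteq Z^{-1}[Z[I]]$ always holds. The content is therefore entirely in the reverse inclusion $Z^{-1}[Z[I]] \subseteq I$: if $g \in \kk\la z\ra$ satisfies $Z_g \in Z[I]$, i.e.\ $Z_g = Z_f$ for some $f \in I$, then we must produce a proof that $g \in I$.

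The key observation is that in $\kk\la z\ra$ the zero set (with multiplicities) of an element determines the element up to a unit. Indeed, if $Z_f = Z_g$ as discrete sets with multiplicities, then $f/g$ and $g/f$ are both entire (resp.\ real analytic), since all the zeros of the numerator are cancelled by the denominator; hence $f/g$ is a nowhere-vanishing analytic function, i.e.\ a unit of $\kk\la z\ra$. (For $\RR\la x\ra$ and $\CC\la x\ra$ one checks the quotient stays in the appropriate subring: the quotient of two real-coefficient power series with matching zero divisors is again real analytic with real coefficients, and similarly for $\CC\la x\ra$ one uses that matching zero sets are symmetric about the real axis, as in Lemma~\ref{impa}.) Consequently $g = u f$ with $u$ a unit, and since $f \in I$ and $I$ is an ideal, $g \in I$. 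This closes the reverse inclusion and hence proves the lemma.

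One subtlety to address carefully is the role of multiplicities in the definition of $Z_f$: throughout the paper zero sets are taken ``with multiplicities'' (cf.\ the statement of Lemma~\ref{impa}), so that $Z_f = Z_g$ genuinely forces $f/g$ to be unit-valued rather than merely forcing $f$ and $g$ to have the same underlying zero locus. If instead one reads $Z_f$ as a bare set, the statement would be false (e.g.\ $z$ and $z^2$), so I would make explicit at this point that $Z[I] = \{Z_f : f \in I\}$ is understood as a collection of divisors / multisets, consistent with the earlier lemmas. The main (and only real) obstacle is thus not the argument but this bookkeeping: ensuring the multiplicity convention is in force and that the unit-valued quotient lands in the correct one of the three rings $\CC\la z\ra$, $\CC\la x\ra$, $\RR\la x\ra$. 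Given Lemma~\ref{impa} and the gcd-domain structure from Lemmas~\ref{impb}, this is routine, and the proof is short.
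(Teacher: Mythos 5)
Your proof is correct, and it takes a genuinely more direct route than the paper's. The paper establishes the nontrivial inclusion $Z^{-1}[Z[I]]\subseteq I$ by a gcd argument: given $Z\in Z[I]$ and any $h$ with $Z_h=Z$, it uses the upward closure of the discrete $z$-filter $Z[I]$ to produce $f,g\in I$ with $Z_f=Z\cup\{p\}$ and $Z_g=Z\cup\{q\}$ for $p\neq q$, so that $h$ is a g.c.d.\ of $f$ and $g$ and hence lies in $I$ by Lemma \ref{impb}. You instead observe that $Z_g=Z_f$ (as divisors) forces $g/f$ to be a nowhere-vanishing analytic function in the appropriate ring, i.e.\ a unit, so $g=uf\in I$; this bypasses Lemma \ref{impb} entirely and isolates the one analytic fact actually doing the work --- note that the paper's final step (``$h$ is the gcd of $f,g$'') tacitly relies on the same associates-are-determined-by-divisors fact, since a gcd is only defined up to units. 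Your insistence on the multiplicity convention is also well taken: the paper's definitions of $\Cal{D}(\kk)$ and of discrete $z$-filters are phrased set-theoretically, and as your $z$ versus $z^2$ example shows, the lemma is simply false without multiplicities, so the convention of Lemma \ref{impa} must be read as in force throughout. What the paper's route buys is that it stays within the $z$-filter/gcd formalism used in the rest of Section \ref{sec2}; your argument is shorter and equally rigorous, provided (as you do) one verifies that the unit $g/f$ lands in the correct one of the three rings.
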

\begin{proof}
	Let $I$ be an ideal of $\kk\la z\ra$. The $Z[I]$ is a discrete $z$-filter. Let $Z\in Z[I]$. Then, consider $Z_{1}=Z\cup \{p\}$ and $Z_{2}=Z\cup\{q\}, \quad(p\neq q)$, both discrete in $\kk$, and $Z_{1}\cap Z_{2}= Z=Z_h$ for some $h\in \kk\la z\ra$. Since $Z_{1}, Z_{2}\in Z[I]$, there exist $f,g\in I$ such that $Z_{1}=Z_f$ and $Z_{2}=Z_g$. Then from Lemma \ref{impb}, $h$ is the gcd of $f,g$ and hence it belongs to $I$. In other words we have proved that $\{h\in \kk\la z\ra : Z_h = Z\}\subseteq I$.
\end{proof}
 
 As rather trivial applications of Lemma \ref{dhopas}, we prove that all non trivial ideals in $\RR\la x\ra$ are formally real and all non trivial ideals of $\CC\la z\ra$ are formally complex (for maximal ideal case these results were proved using different methods in ~\cite{GH}). That is,
 \begin{corollary}
 	If $I$ is a non trivial ideal of $\RR\la x\ra$ and $f^2 + g^2 \in I$, then $f, g \in I$.
 \end{corollary}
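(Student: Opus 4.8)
The plan is to reduce the statement to the fact that $-1$ is a sum of two squares of units in $\RR\langle x\rangle$, combined with Lemma \ref{dhopas}. First I would observe that if $f^2 + g^2 \in I$, then since we are working in a gcd domain, I may write $f = d u$, $g = d v$ with $d = \gcd(f,g)$ and $\gcd(u,v) = 1$; then $f^2 + g^2 = d^2(u^2 + v^2)$, and the zero set $Z_{f^2+g^2}$ equals $Z_d \cup Z_{u^2+v^2}$. Since $u$ and $v$ are relatively prime, they have no common real zero, but $u^2 + v^2$ vanishes exactly where both $u$ and $v$ vanish \emph{as real-analytic functions} — and here is the first point to nail down: over $\RR$, $u(x)^2 + v(x)^2 = 0$ forces $u(x) = v(x) = 0$, so $Z_{u^2+v^2} = Z_u \cap Z_v = \emptyset$. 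Hence $u^2 + v^2$ is a nonvanishing real-analytic function, i.e.\ a unit in $\RR\langle x\rangle$, and therefore $Z_{f^2+g^2} = Z_d = Z_f \cap Z_g$.

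Next I would use Lemma \ref{dhopas}: since every ideal of $\kk\langle z\rangle$ is a discrete $z$-ideal, $I = Z^{-1}[Z[I]]$. We have $Z_{f^2+g^2} \in Z[I]$, and we have just shown $Z_{f^2+g^2} = Z_f \cap Z_f \supseteq$, more precisely $= Z_d$ where $d \mid f$ and $d \mid g$. Actually the cleanest route: $Z_{f^2+g^2} = Z_d$, and since $Z_f = Z_d \cup Z_u \supseteq Z_d$ is an overset in $\Cal{D}(\kk)$ of a member of the $z$-filter $Z[I]$, filter-upward-closure gives $Z_f \in Z[I]$ — wait, that is backwards; $Z_f$ is larger, so upward closure does give $Z_f \in Z[I]$ directly. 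Then since $I$ is a discrete $z$-ideal, $Z_f \in Z[I]$ and $f \in Z^{-1}[Z[I]] = I$ forces $f \in I$, and symmetrically $g \in I$. So the argument is: $Z_{f^2+g^2} \in Z[I]$, this set equals $Z_d$ which is contained in both $Z_f$ and $Z_g$, so by property (iii) of discrete $z$-filters $Z_f, Z_g \in Z[I]$, whence $f, g \in I$ by Lemma \ref{dhopas}.

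I would also double-check the degenerate cases: if $d$ is a unit (i.e.\ $f, g$ share no zeros) then $f^2 + g^2$ is a unit, contradicting $f^2 + g^2 \in I$ and $I$ proper, so in fact this case cannot occur unless $f = g = 0$; and if $f = 0$ then trivially $f \in I$ and $g^2 \in I$, and one needs $g \in I$, which again follows since $Z_{g^2} = Z_g \in Z[I]$ and $I$ is a discrete $z$-ideal. So the statement holds in all cases.

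The main obstacle — really the only substantive point — is the passage $u^2 + v^2 \in I \Rightarrow u^2 + v^2$ is a unit, i.e.\ verifying that a sum of two squares of relatively prime real-analytic functions has empty real zero set. This is immediate from positivity over $\RR$ ($u(x)^2 + v(x)^2 = 0 \iff u(x) = v(x) = 0$) together with $\gcd(u,v) = 1$ (which, via Lemma \ref{impa}/\ref{impb}, means $u$ and $v$ have no common zero, as a common zero of both would make $h(x) = x - p$ a common factor). Everything else is a formal manipulation with the dictionary between ideals and discrete $z$-filters established in the preceding lemmas, and I expect the whole proof to be only a few lines.
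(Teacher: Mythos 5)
Your proof is correct and, once the scaffolding is stripped away, is exactly the paper's argument: $Z_{f^2+g^2}=Z_f\cap Z_g$ (immediate from positivity of squares over $\RR$), upward closure of the discrete $z$-filter $Z[I]$ gives $Z_f, Z_g\in Z[I]$, and Lemma \ref{dhopas} then yields $f,g\in I$. The detour through $d=\gcd(f,g)$ and the unit $u^2+v^2$ is superfluous, since the key identity $Z_{f^2+g^2}=Z_f\cap Z_g$ needs no factorization at all.
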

 \begin{proof}
 	$f^2 + g^2 \in I$ means that $Z_{f^2 + g^2}\in Z[I]$. But $Z_{f^2 + g^2} = Z_f\cap Z_g$, giving that $Z_f, Z_g\in Z[I]$. Hence Lemma \ref{dhopas} implies $f, g \in I$.
 \end{proof}
 \begin{corollary}
 	If $I$ is a non trivial ideal of $\CC\la z\ra$, then $r^2+1\in I$ for some $r\in \CC\la z\ra$.
 \end{corollary}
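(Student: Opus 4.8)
The plan is to exploit the fact that, in sharp contrast with the genuinely substantive real case treated in the previous corollary, the ring $\CC\la z\ra$ already contains the constant function $i$, so the required element can be produced by an essentially algebraic manipulation. First I would use that $I$ is non-trivial to fix some $f\in I$ with $f\neq 0$ and $f$ not a unit, so that $Z_f$ is a non-empty member of the discrete $z$-filter $Z[I]$. Then I would set $r:=f+i\in\CC\la z\ra$ and simply compute
\[
r^{2}+1=(r-i)(r+i)=f(f+2i)=f^{2}+2if .
\]
Thus $r^{2}+1$ is an explicit multiple of $f$, and hence lies in $I$ because $I$ is an ideal.

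To keep the write-up parallel with the formally-real statement for $\RR\la x\ra$ and to display the role of $Z[I]$, I would alternatively phrase the conclusion through Lemma \ref{dhopas}: from $r^{2}+1=f(f+2i)$ one gets $Z_{r^{2}+1}=Z_f\cup Z_{f+2i}$, and since $f+2i\not\equiv 0$ (otherwise the unit $-2i$ would lie in the proper ideal $I$) this set is closed and discrete; being a superset of $Z_f\in Z[I]$, it lies in $Z[I]$ by the upward-closure axiom of a discrete $z$-filter, whence $r^{2}+1\in Z^{-1}[Z[I]]=I$.

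I do not expect any real obstacle here; the only thing to get right is the choice of witness. Choosing $r=f+i$ — rather than, say, attempting to engineer an $r$ whose zero set $Z_{r^{2}+1}$ is \emph{exactly} a prescribed member of $Z[I]$, which would require a delicate Weierstrass/Mittag–Leffler interpolation controlling all zeros of $e^{\psi}-c$ — reduces the whole argument to a one-line computation. It is perhaps worth noting in the final text that even the degenerate choice $f=0$, i.e. $r=i$, already works since $0\in I$; the point of taking $f$ non-zero and non-unit is only to make the analogy with the $\RR\la x\ra$ corollary and the appearance of Lemma \ref{dhopas} transparent.
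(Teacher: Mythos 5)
Your proposal is correct, and it uses exactly the paper's witness: the paper also takes a non-unit $f\in I$ and sets $g=(f+i)^2+1$. The difference is in the justification. The paper argues via the zero-set machinery: $Z_f\subseteq Z_g$, so $Z_g\in Z[I]$ by upward closure, and then $g\in I$ by Lemma \ref{dhopas} (every ideal is a discrete $z$-ideal). Your primary argument instead observes the explicit factorization $r^2+1=(f+i)^2+1=f(f+2i)$, so that $r^2+1$ is a multiple of $f$ and lies in $I$ simply because $I$ is an ideal --- no appeal to Lemma \ref{dhopas} or to $z$-filters is needed. This is a genuine simplification: it shows the complex statement is purely algebraic (any ideal of any commutative ring containing $i$ satisfies it), whereas the formally-real corollary for $\RR\la x\ra$ really does need the discrete $z$-ideal property. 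Your secondary phrasing through $Z[I]$ coincides with the paper's proof, and your remark that even $f=0$, $r=i$ works is accurate. One microscopic caveat: in the filter-based variant you should (as you do) note $f+2i\not\equiv 0$ so that $Z_{r^2+1}$ is genuinely a closed discrete set; the direct factorization route does not even need this.
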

 \begin{proof}
 	Since $I$ is non trivial so there exists $f\in I$ such that $Z_f\in Z[I]$ and $Z_f$ is non-empty. Let $g=(f+i)^2+1 \in \CC\la z\ra$. Then $Z_f\subseteq Z_g$ implies that $Z_g\in Z[I]$. Hence $g\in I$ follows from Lemma \ref{dhopas}. 
 \end{proof}
 
\begin{definition}\label{dpzf} A discrete $z$-filter $\mathcal{F}$ is called a discrete prime $z$-filter on $\kk$ if for any two $Z_{1}, Z_{2} \in \Cal{D}(\kk)$, $Z_{1}\cup Z_{2} \in \mathcal{F} \text{  implies  } ~Z_{1}\in \mathcal{F}~ \mbox{or}~ Z_{2} \in \mathcal{F}$.
\end{definition}
%\begin{lemma} 
It follows easily that every discrete $z$-ultrafilter on $\kk$ is a discrete prime $z$-filter.
%\end{lemma}
%\begin{proof} Suppose $\mathcal{F}$ is a discrete $z$-ultrafilter. Let $Z_{1},Z_{2}\in \Cal{D}(\mathbb{K})$ such that $Z_{1}\cup Z_{2}\in \mathcal{F}$. If possible let $Z_{1}\not\in \mathcal{F}$ and $Z_{2}\not\in \mathcal{F}$. Since $\mathcal{F}$ is maximal so there exists two discrete closed sets $F_{1}$ and $F_{2} \in \mathcal{F}$ such that $Z_{1}\cap F_{1}=\emptyset$ and $Z_{2}\cap F_{2}=\emptyset$ and therefore $(Z_{1}\cup Z_{2})\cap (F_{1}\cap F_{2})=\emptyset$. But $F_{1}\cap F_{2} \in \mathcal{F}$ which contradicts that $Z_{1}\cup Z_{2} \in \mathcal{F}$.\end{proof}

\begin{lemma} If $I$ is a prime ideal of $\mathbb{K}\langle z\rangle$ then $Z[I]$ is a discrete prime $z$-filter and conversely.
\end{lemma}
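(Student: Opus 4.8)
The plan is to push everything through the dictionary between ideals and discrete $z$-filters, using two tools already at hand: Lemma~\ref{dhopas}, which says every ideal $I$ of $\kk\la z\ra$ satisfies $Z^{-1}[Z[I]]=I$, and Lemma~\ref{impa}, which realizes every admissible closed discrete set as the zero set of a function in the appropriate ring. Together with the (already proved) fact that $Z[I]$ is always a discrete $z$-filter and that $Z^{-1}[\mathcal{F}]$ is always an ideal, both implications become short computations.

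\emph{Forward direction.} Assume $I$ is prime. Since $Z[I]$ is known to be a discrete $z$-filter, only the prime condition of Definition~\ref{dpzf} remains. Suppose $Z_1,Z_2\in\Cal{D}(\kk)$ with $Z_1\cup Z_2\in Z[I]$. By Lemma~\ref{impa} pick $f,g\in\kk\la z\ra$ with $Z_f=Z_1$ and $Z_g=Z_2$ (in the $\CC\la x\ra$ case one takes $Z_1,Z_2$ symmetric about the real axis, exactly as permitted by Lemma~\ref{impa}). Then $Z_{fg}=Z_f\cup Z_g=Z_1\cup Z_2\in Z[I]$, so $fg\in Z^{-1}[Z[I]]=I$ by Lemma~\ref{dhopas}. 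As $I$ is prime, $f\in I$ or $g\in I$, i.e.\ $Z_1=Z_f\in Z[I]$ or $Z_2=Z_g\in Z[I]$, which is the prime property.

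\emph{Converse.} Let $\mathcal{F}$ be a discrete prime $z$-filter and set $I=Z^{-1}[\mathcal{F}]$; this is an ideal of $\kk\la z\ra$ by the lemma above, and it is proper because $Z_1=\emptyset\notin\mathcal{F}$ shows the unit of $\kk\la z\ra$ does not lie in $I$. If $fg\in I$ then $Z_f\cup Z_g=Z_{fg}\in\mathcal{F}$, so primeness of $\mathcal{F}$ gives $Z_f\in\mathcal{F}$ or $Z_g\in\mathcal{F}$, hence $f\in I$ or $g\in I$; thus $I$ is prime. (Read instead as the assertion that $Z[I]$ being a discrete prime $z$-filter forces $I$ prime, the identical computation applies after invoking $Z^{-1}[Z[I]]=I$ from Lemma~\ref{dhopas}.)

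I do not expect a genuine obstacle: the real content sits in Lemmas~\ref{impa} and~\ref{dhopas}, and the only points demanding care are choosing the representing functions $f,g$ in the correct ring (the $\CC\la x\ra$/symmetry bookkeeping inherited from Lemma~\ref{impa}) and explicitly recording that $Z^{-1}[\mathcal{F}]$ is a proper ideal.
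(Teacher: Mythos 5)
Your proof is correct and follows essentially the same route as the paper's: both directions reduce to the identity $Z^{-1}[Z[I]]=I$ from Lemma~\ref{dhopas} together with $Z_{fg}=Z_f\cup Z_g$, after representing the discrete sets as zero sets via Lemma~\ref{impa}. Your added remarks on properness of $Z^{-1}[\mathcal{F}]$ and the symmetry bookkeeping for $\CC\la x\ra$ are sensible refinements but do not change the argument.
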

\begin{proof} Let $Z_{1},Z_{2}\in \Cal{D}(\mathbb{K})$ such that $Z_{1}\cup Z_{2}\in Z[I]$. Let $Z_{1}=Z(f_{1})$ and $Z_{2}=Z(f_{2})$. Therefore $Z(f_{1})\cup Z(f_{2})\in Z[I]$. Since $I$ is a discrete $z$-ideal so $f_{1}f_{2}\in I$ imply that $f_{1}\in I \mbox{  or  } f_{2}\in I$ and therefore $Z_{1}\in Z[I] \mbox{   or   } Z_{2}\in Z[I]$. Conversely, let $\mathcal{F}$ be a discrete prime $z$-filter on $\mathbb{K}$. Let $f_{1}f_{2}\in Z^{-1}[\mathcal{F}]$ and so $Z(f_{1}f_{2})\subseteq \mathcal{F}$, i.e., $Z(f_{1})\cup Z(f_{2})\in \mathcal{F}$ and consequently either $Z(f_{1})$ or $Z(f_{2})\in \mathcal{F}$,i.e., $f_{1}\in Z^{-1}[\mathcal{F}]$ or $f_{2}\in Z^{-1}[\mathcal{F}]$.%Clearly, every discrete $z$-ultrafilter is a discrete prime $z$-filter. Also, if $I$ is a discrete $z$-ideal of $\kk\la z\ra$ which is prime also then $Z[I]$ is a discrete prime $z$-filter on $\kk$. If $\mathcal{F}$ is a discrete prime $z$-filter on $\kk$ then $Z^{-1}[\mathcal{F}]$ is a prime ideal of $\kk\la z\ra$.
\end{proof}
Any family $\mathcal{F}$ of discrete closed sets in $\kk$ with a finite intersection property is contained in a $z$-filter of zero sets in $\kk$. The smallest such $z$-filter is said to be generated by $\mathcal{F}$. It $\mathcal{F}$ is also a discrete $z$-filter on $\kk$, then it is closed under finite intersection, therefore it also forms a base for the $z$-filter which it generates. %We denote this smallest generated $z$-filter by $\langle\mathcal{F}\rangle$. %Hence a discrete $z$-filter $\mathcal{F}$ generates $\langle\mathcal{F}\rangle$ and base for $\langle\mathcal{F}\rangle$ also. 
The following lemma can be easily checked using the definition of $z$-filters.
\begin{lemma}\label{cntd} If $\mathcal{F}$ and $\mathcal{F'}$ are $z$-filters on $\kk$, then $\mathcal{F}\subsetneq \mathcal{F'} \Rightarrow \langle\mathcal{F}\rangle\subsetneq \langle\mathcal{F'}\rangle$.
\end{lemma}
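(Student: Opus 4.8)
The plan is to reduce the statement to a single witness and push it through the generation operation. One inclusion is free: since $\mathcal F\subseteq\mathcal F'$, any $z$-filter containing $\mathcal F'$ contains $\mathcal F$, so by minimality of the generated $z$-filter we get $\langle\mathcal F\rangle\subseteq\langle\mathcal F'\rangle$ at once. Hence the entire content of the lemma is the \emph{strictness} of this inclusion, and for that I would exhibit an explicit element of $\langle\mathcal F'\rangle$ that does not lie in $\langle\mathcal F\rangle$.

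The natural candidate is any $D\in\mathcal F'\setminus\mathcal F$, which exists by the hypothesis $\mathcal F\subsetneq\mathcal F'$; trivially $D\in\mathcal F'\subseteq\langle\mathcal F'\rangle$. To see that $D\notin\langle\mathcal F\rangle$, I would invoke the observation recorded just above the lemma: a discrete $z$-filter is closed under finite intersection, hence downward directed, hence a base for the $z$-filter it generates; consequently a set $Z$ lies in $\langle\mathcal F\rangle$ if and only if $D_0\subseteq Z$ for some $D_0\in\mathcal F$. Now if $D\in\langle\mathcal F\rangle$, then $D_0\subseteq D$ for some $D_0\in\mathcal F$; but $D$, being a member of the discrete $z$-filter $\mathcal F'$, belongs to $\Cal{D}(\kk)$, so the third condition of Definition \ref{dzf} applied to $\mathcal F$ (with $D_1=D_0$) forces $D\in\mathcal F$, contradicting the choice of $D$. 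Therefore $D\in\langle\mathcal F'\rangle\setminus\langle\mathcal F\rangle$, and the inclusion is proper.

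I do not expect a genuine obstacle here; the only point that must not be glossed over is the ``normal form'' description $Z\in\langle\mathcal F\rangle\iff Z$ contains a member of $\mathcal F$, i.e. the fact that passing from a discrete $z$-filter to the $z$-filter it generates and then intersecting back with $\Cal{D}(\kk)$ recovers the original family — which is precisely upward closedness inside $\Cal{D}(\kk)$, the third condition of Definition \ref{dzf}. Once that is in place the argument is a two-line chase, so the proof is short; the role of the lemma is simply to let one read strict inclusions of generated $z$-filters off strict inclusions of their discrete bases, which is what is needed later when building chains of prime $z$-filters.
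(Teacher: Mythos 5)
Your proof is correct; the paper leaves this lemma as an ``easily checked'' fact, and your argument --- the free inclusion $\langle\mathcal F\rangle\subseteq\langle\mathcal F'\rangle$ from minimality, plus the observation that a witness $D\in\mathcal F'\setminus\mathcal F$ cannot lie in $\langle\mathcal F\rangle$ because $\mathcal F$ is a base for $\langle\mathcal F\rangle$ and is upward closed within $\Cal{D}(\kk)$ by condition (3) of Definition \ref{dzf} --- is exactly the intended verification. (As you implicitly assume, the hypothesis must be read as ``discrete $z$-filters'', which is how the lemma is invoked in Proposition \ref{peum}; for genuine $z$-filters of zero sets the statement is vacuous since $\langle\mathcal F\rangle=\mathcal F$.)
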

\subsection{Main theorems}\label{apps}\label{ssec22}
Now we prove our main results for Section \ref{sec2}. 
Our first result shows a rather important connection between prime $z$-filters and discrete prime $z$-filters on $\kk$.
\begin{proposition}\label{hil gaya} If $\mathcal{F}$ is a discrete prime $z$-filter on $\kk$ then $\langle\mathcal{F}\rangle$ is a prime $z$-filter on $\kk$. Conversely, if $\mathcal{F}'$ is a prime $z$-filter on $\kk$ which contains discrete closed sets of $\kk$ then there exists a discrete prime $z$-filter $\mathcal{F}$ such that $\langle\mathcal{F}\rangle=\mathcal{F}'$.
\end{proposition}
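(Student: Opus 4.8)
My guiding observation is that $\kk$ is a metric space, so every closed subset $F$ of $\kk$ is a zero set (explicitly $F=Z_g$ for $g(x)=d(x,F)$); hence every member of $\Cal{D}(\kk)$ is a zero set of $\kk$, while conversely every subset of a closed discrete set is again closed and discrete. I also use that a discrete $z$-filter $\mathcal{F}$, being closed under finite intersection (Definition \ref{dzf}), is a base for the $z$-filter it generates, so a zero set $Z$ lies in $\langle\mathcal{F}\rangle$ if and only if $D\subseteq Z$ for some $D\in\mathcal{F}$. For the forward implication, let $\mathcal{F}$ be a discrete prime $z$-filter and suppose $Z_1\cup Z_2\in\langle\mathcal{F}\rangle$ with $Z_1,Z_2$ zero sets of $\kk$. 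Pick $D\in\mathcal{F}$ with $D\subseteq Z_1\cup Z_2$, and replace $D$ by its intersection with a genuine closed discrete member of $\mathcal{F}$ --- which exists provided $\mathcal{F}$ is not the trivial filter $\{\kk\}$, the one exceptional case, discussed below --- so that $D$ itself is closed and discrete. Then $D=(D\cap Z_1)\cup(D\cap Z_2)$ with both pieces in $\Cal{D}(\kk)$, so the defining property of discrete prime $z$-filters (Definition \ref{dpzf}) yields, say, $D\cap Z_1\in\mathcal{F}$; since $D\cap Z_1\subseteq Z_1$ and $\langle\mathcal{F}\rangle$ is upward closed among zero sets, $Z_1\in\langle\mathcal{F}\rangle$, which shows $\langle\mathcal{F}\rangle$ is prime.

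For the converse, let $\mathcal{F}'$ be a prime $z$-filter containing some closed discrete set. The natural candidate is $\mathcal{F}:=\mathcal{F}'\cap\Cal{D}(\kk)$, the closed discrete sets (together with $\kk$) that belong to $\mathcal{F}'$. One checks readily that $\mathcal{F}$ is a discrete $z$-filter: $\emptyset\notin\mathcal{F}$; it is stable under pairwise intersection because $\mathcal{F}'$ and $\Cal{D}(\kk)$ both are; and it is upward closed inside $\Cal{D}(\kk)$ because members of $\Cal{D}(\kk)$ are zero sets and $\mathcal{F}'$ is upward closed among zero sets. It is moreover a discrete prime $z$-filter, its primality clause being simply the restriction to $\Cal{D}(\kk)$ of that of $\mathcal{F}'$. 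Finally $\langle\mathcal{F}\rangle\subseteq\mathcal{F}'$ since $\mathcal{F}\subseteq\mathcal{F}'$ and $\mathcal{F}'$ is a $z$-filter; and conversely, fixing a closed discrete $D_0\in\mathcal{F}'$ --- this is where the hypothesis is used --- any $Z\in\mathcal{F}'$ satisfies $D_0\cap Z\in\mathcal{F}'\cap\Cal{D}(\kk)=\mathcal{F}$ with $D_0\cap Z\subseteq Z$, whence $Z\in\langle\mathcal{F}\rangle$. Thus $\langle\mathcal{F}\rangle=\mathcal{F}'$.

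Nearly all of this is filter bookkeeping; the single step carrying genuine content is the last equality $\langle\mathcal{F}'\cap\Cal{D}(\kk)\rangle=\mathcal{F}'$, and it is there that the hypothesis that $\mathcal{F}'$ contains a closed discrete set is indispensable: without a discrete $D_0\in\mathcal{F}'$ to intersect against, a typical zero set of $\mathcal{F}'$ dominates no discrete member, and a prime $z$-filter with no discrete member at all is simply not of the form $\langle\mathcal{F}\rangle$. The mirror image of this is the only caveat in the forward direction: one must set aside the trivial discrete $z$-filter $\{\kk\}$, for which $\langle\mathcal{F}\rangle=\{\kk\}$ is not prime (as $\kk$ is a union of two proper zero sets, two closed half-spaces, say); every other discrete prime $z$-filter contains a genuine closed discrete set, which is exactly what the forward argument needs when the chosen $D$ happens to be $\kk$. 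The standing precaution throughout is to keep every set in play inside the lattice of zero sets of $\kk$, so that the $z$-filter axioms apply --- precisely what the opening observation guarantees.
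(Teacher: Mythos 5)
Your proof is correct and follows essentially the same route as the paper's: in the forward direction you intersect a filter member $D\subseteq Z_1\cup Z_2$ against $Z_1$ and $Z_2$ and invoke discrete primality, and in the converse you take $\mathcal{F}=\mathcal{F}'\cap\Cal{D}(\kk)$ and verify $\langle\mathcal{F}\rangle=\mathcal{F}'$ by intersecting against a fixed discrete $D_0\in\mathcal{F}'$. Your one genuine addition is the observation that the trivial filter $\{\kk\}$ (which qualifies as a discrete prime $z$-filter under Definition \ref{dzf}, since $\kk\in\Cal{D}(\kk)$, yet generates a non-prime $z$-filter) must be excluded --- a degenerate case the paper's proof silently passes over by asserting that $Z\cap Z_1$ and $Z\cap Z_2$ are discrete, which fails when the only available $Z$ is $\kk$ itself.
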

\begin{proof} First of all, $\langle\mathcal{F}\rangle$ is a $z$-filter on $\kk$. As regards primality, let $Z_{1}, Z_{2} \in Z[\kk]$ (the family of all zero sets of continuous functions in $\kk$) such that $Z_{1}\cup Z_{2} \in \langle\mathcal{F}\rangle$. This means that there exists $Z\in \mathcal{F}$ such that $Z\subseteq Z_{1}\cup Z_{2}\in \langle\mathcal{F}\rangle$ (since $\mathcal{F}$ is a base for $\langle\mathcal{F}\rangle$). This means that $(Z\cap Z_{1})\cup (Z\cap Z_{2})=Z\cap (Z_{1}\cup Z_{2})=Z\in \mathcal{F}$. Since each of $Z\cap Z_{1}$ and $Z\cap Z_{2}$ is discrete and $\mathcal{F}$ is a discrete prime $z$-filter on $\kk$, therefore $Z\cap Z_{1}$ or $Z\cap Z_{2}$ belongs to $\mathcal{F}$ and it implies that $Z_{1}\in \langle\mathcal{F}\rangle$ or $Z_{2}\in \langle\mathcal{F}\rangle$. For the converse part one can consider the set $\mathcal{F}$ consisting of discrete closed sets in $\mathcal{F}'$. It is now easy to check that $\mathcal{F}$ is a discrete prime $z$-filter and $\langle\mathcal{F}\rangle=\mathcal{F}'$. 
\end{proof}
%Proposition \ref{hil gaya} essentially says that every discrete prime $z$-filter on $\kk$ generates a prime $z$-filter on $\kk$. 
%We illustrate a couple of easy applications of Proposition \ref{hil gaya} to put it in proper perspective. 

Seeing that every prime $z$-filter is contained in a unique $z$-ultrafilter on $\kk$ (see ~\cite{GJ}, Section 2.13), we can conclude
\begin{corollary}\label{gnrt} Every discrete prime $z$-filter of discrete closed sets of $\kk$ generates a unique $z$-ultrafilter of zero sets of $\kk$. In particular therefore every discrete $z$-ultrafilter $\Cal{F}$ of $\kk$ generates a unique $z$-ultrafilter $\langle\Cal{F}\rangle^\uparrow$ of zero sets of $\kk$.
\end{corollary}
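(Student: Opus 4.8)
The plan is to read this off directly from Proposition~\ref{hil gaya} together with the classical fact, recorded just above, that every prime $z$-filter on a topological space is contained in a \emph{unique} $z$-ultrafilter (\cite{GJ}, Section 2.13). Concretely: let $\mathcal{F}$ be a discrete prime $z$-filter of discrete closed sets of $\kk$. First apply Proposition~\ref{hil gaya} to conclude that the $z$-filter $\langle\mathcal{F}\rangle$ of zero sets of $\kk$ generated by $\mathcal{F}$ is a prime $z$-filter. Then invoke \cite{GJ}, 2.13, to obtain a $z$-ultrafilter $\langle\mathcal{F}\rangle^\uparrow$ of zero sets of $\kk$ with $\langle\mathcal{F}\rangle\subseteq\langle\mathcal{F}\rangle^\uparrow$, unique among $z$-ultrafilters containing $\langle\mathcal{F}\rangle$.

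The one point that still needs a word is the sense in which $\mathcal{F}$ itself ``generates'' a unique $z$-ultrafilter. Here I would observe that a $z$-ultrafilter of zero sets of $\kk$ contains $\mathcal{F}$ precisely when it contains $\langle\mathcal{F}\rangle$: by construction $\langle\mathcal{F}\rangle$ is the smallest $z$-filter of zero sets containing $\mathcal{F}$, so any $z$-filter containing $\mathcal{F}$ already contains $\langle\mathcal{F}\rangle$, and the reverse implication is trivial. Hence the $z$-ultrafilters on $\kk$ containing $\mathcal{F}$ are exactly those containing $\langle\mathcal{F}\rangle$, and by the previous paragraph there is exactly one of them, namely $\langle\mathcal{F}\rangle^\uparrow$.

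The ``in particular'' clause is then immediate: it was already noted that every discrete $z$-ultrafilter $\Cal{F}$ of $\kk$ is a discrete prime $z$-filter, so the first part applies verbatim and yields the unique $z$-ultrafilter $\langle\Cal{F}\rangle^\uparrow$ of zero sets of $\kk$ generated by $\Cal{F}$. I do not anticipate any genuine obstacle here, since the corollary is essentially a repackaging of Proposition~\ref{hil gaya} and the cited uniqueness statement; the only thing worth spelling out is the equivalence ``contains $\mathcal{F}$ $\Longleftrightarrow$ contains $\langle\mathcal{F}\rangle$'' which is what legitimizes the word ``generates''.
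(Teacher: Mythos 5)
Your proof is correct and is exactly the paper's argument: the corollary is deduced by combining Proposition~\ref{hil gaya} with the fact from \cite{GJ}, Section 2.13, that every prime $z$-filter is contained in a unique $z$-ultrafilter, the ``in particular'' clause following since every discrete $z$-ultrafilter is a discrete prime $z$-filter. Your extra remark justifying the word ``generates'' is a harmless (and sensible) elaboration of what the paper leaves implicit.
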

Now, we prove two important and rather special properties of prime ideals of $\kk\la z\ra$.
\begin{proposition}\label{peum}
	Every prime ideal of $\kk\la z\ra$ is contained in a unique maximal ideal of $\kk\la z\ra$.
\end{proposition}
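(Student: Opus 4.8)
The plan is to translate the statement about prime ideals into the language of discrete $z$-filters, where we have a cleaner toolkit, and then exploit the fact (Corollary \ref{gnrt}) that a discrete prime $z$-filter generates a unique $z$-ultrafilter. Let $\mathfrak{p}$ be a prime ideal of $\kk\la z\ra$. First I would pass to $Z[\mathfrak{p}]$, which by the lemma preceding Proposition \ref{hil gaya} is a discrete prime $z$-filter on $\kk$. The existence of \emph{a} maximal ideal above $\mathfrak{p}$ is cheap — Zorn's lemma, or simply extend $Z[\mathfrak{p}]$ to a discrete $z$-ultrafilter $\mathcal{F}$ and take $Z^{-1}[\mathcal{F}]$, which is maximal by Lemma \ref{bijct} and contains $\mathfrak{p}$ since $Z^{-1}[Z[\mathfrak{p}]] = \mathfrak{p}$ by Lemma \ref{dhopas}. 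So the whole content is \emph{uniqueness}.

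For uniqueness, suppose $M_1$ and $M_2$ are maximal ideals of $\kk\la z\ra$ both containing $\mathfrak{p}$. Then $Z[M_1]$ and $Z[M_2]$ are discrete $z$-ultrafilters (Lemma \ref{bijct}), each containing the discrete prime $z$-filter $Z[\mathfrak{p}]$. Now I would apply Corollary \ref{gnrt}: the discrete prime $z$-filter $Z[\mathfrak{p}]$ generates a \emph{unique} $z$-ultrafilter $\langle Z[\mathfrak{p}]\rangle^\uparrow$ of zero sets of $\kk$. On the other hand, by Lemma \ref{cntd} the inclusion $Z[\mathfrak{p}] \subseteq Z[M_i]$ gives $\langle Z[\mathfrak{p}]\rangle \subseteq \langle Z[M_i]\rangle$, and since $Z[M_i]$ is a discrete $z$-ultrafilter, $\langle Z[M_i]\rangle$ is a $z$-ultrafilter (again Corollary \ref{gnrt}); any $z$-ultrafilter containing the prime $z$-filter $\langle Z[\mathfrak{p}]\rangle$ must equal $\langle Z[\mathfrak{p}]\rangle^\uparrow$, so $\langle Z[M_1]\rangle = \langle Z[M_2]\rangle = \langle Z[\mathfrak{p}]\rangle^\uparrow$. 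The remaining step is to descend this back: I need that $\langle Z[M_1]\rangle = \langle Z[M_2]\rangle$ forces $Z[M_1] = Z[M_2]$, i.e. that the map $\mathcal{F} \mapsto \langle\mathcal{F}\rangle$ from discrete $z$-ultrafilters to $z$-ultrafilters is injective. This holds because $Z[M_i] = \{\,D \in \Cal{D}(\kk) : D \in \langle Z[M_i]\rangle\,\}$ — a discrete closed set lies in the generated $z$-filter iff it already lies in the discrete $z$-filter, since the discrete $z$-filter is a base closed under the superset operation within $\Cal{D}(\kk)$, and any member of $\langle\mathcal{F}\rangle$ contained in $\Cal{D}(\kk)$ contains some member of $\mathcal{F}$ hence is itself in $\mathcal{F}$ by axiom (3) of Definition \ref{dzf}. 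Thus $Z[M_1] = Z[M_2]$, and applying $Z^{-1}$ and using Lemma \ref{dhopas} gives $M_1 = M_2$.

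I expect the main obstacle to be precisely that last injectivity point — making sure the passage from a discrete $z$-filter to the $z$-filter of zero sets it generates loses no information about which \emph{discrete} sets are members. Once one observes that $\Cal{D}(\kk)$-membership in $\langle\mathcal{F}\rangle$ is detected by the base $\mathcal{F}$ together with the upward-closure axiom, this is routine, but it is the one place where the structure of $\kk\la z\ra$ (via Lemma \ref{impa}, guaranteeing every discrete closed set is a zero set, and Lemma \ref{dhopas}) is genuinely used rather than formal filter manipulation. Everything else is bookkeeping between the ideal side and the filter side via Lemmas \ref{bijct} and \ref{dhopas}.
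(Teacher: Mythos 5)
Your proof is correct and follows essentially the same route as the paper: pass to the discrete prime $z$-filter $Z[\mathfrak{p}]$, invoke Corollary \ref{gnrt} on the uniqueness of the $z$-ultrafilter it generates, and descend back to the ideals via Lemmas \ref{bijct} and \ref{dhopas}. In fact you are slightly more careful than the paper at the one delicate point (that distinct discrete $z$-ultrafilters generate distinct $z$-ultrafilters, i.e.\ that $\mathcal{F}$ is recoverable from $\langle\mathcal{F}\rangle$ as its set of discrete members), which the paper's proof asserts without comment; your only small slip is attributing to Corollary \ref{gnrt} the fact that $\langle Z[M_i]\rangle$ is itself a $z$-ultrafilter, which is really the content of (the proof of) Proposition \ref{propi}, though the argument survives either way by passing to $\langle Z[M_i]\rangle^\uparrow$.
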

\begin{proof} Let $P$ be a ideal of $\kk\la z\ra$ and suppose that $P$ is contained in two distinct maximal ideals $M$ and $M'$ of $\kk\la z\ra$. Then by Lemma \ref{cntd} the discrete prime $z$-filter $Z[P]$ is contained in both $Z[M]$ and $Z[M']$ and therefore $\langle Z[P]\rangle$ is contained in both $\langle Z[M]\rangle$ and $\langle Z[M']\rangle$. Since $M$ and $M'$ are distinct, therefore by Lemma \ref{bijct} $Z[M]$ and $Z[M']$ are distinct and hence $\langle Z[M]\rangle^\uparrow$ and $\langle Z[M']\rangle^\uparrow$ are distinct. But $Z[P]$ is contained in both $\langle Z[M]\rangle^\uparrow$ and $\langle Z[M']\rangle^\uparrow$. This contradicts the first part of Lemma \ref{gnrt}. Hence the claim follows.
\end{proof}
\begin{proposition}\label{peum1}
	Every fixed prime ideal is maximal.
\end{proposition}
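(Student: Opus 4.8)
The plan is to unwind what a \emph{fixed} prime ideal means at the level of $z$-filters and then identify it. Let $P$ be a fixed prime ideal of $\kk\la z\ra$; "fixed" should mean that the discrete prime $z$-filter $Z[P]$ has a common point, i.e.\ there is a point $\point \in \kk$ lying in every $Z_f$ with $f \in P$. Equivalently, writing $\mathfrak{m}_\point$ for the ideal of all $g \in \kk\la z\ra$ with $g(\point)=0$, the hypothesis is $P \subseteq \mathfrak{m}_\point$. Since $\mathfrak{m}_\point$ is generated by $h(z)=z-\point$ (in the $\RR\la x\ra$ or $\CC\la z\ra$ case, or by $h(x)=x-\point$ in the $\CC\la x\ra$ case when $\point\in\RR$; I will treat the real coefficient subtlety where it arises) it is a maximal ideal, so it suffices to show $P = \mathfrak{m}_\point$, i.e.\ that $\mathfrak{m}_\point \subseteq P$.

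The key step is to produce, for an arbitrary $g \in \mathfrak{m}_\point$, an element of $P$ that forces $g \in P$ via primality and the gcd property (Lemma \ref{impb}, Lemma \ref{dhopas}). First, $P$ is non-trivial, so pick $f \in P$ with $f \neq 0$; then $\point \in Z_f$, so $f = (z-\point)^{k} f_1$ with $f_1(\point)\neq 0$ and $k \geq 1$. Since $Z_{f_1}$ is a discrete set not containing $\point$, by Lemma \ref{impa} there is $u \in \kk\la z\ra$ with $Z_u = Z_{f_1}$; then $Z_f = Z_{(z-\point)u} \cup Z_{(z-\point)^{k-1}f_1}$ — more cleanly, write $Z_f = \{\point\} \cup Z_{f_1}$ (as sets, ignoring multiplicity), and realize $\{\point\} = Z_{z-\point}$ and $Z_{f_1}$ as zero sets. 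By Lemma \ref{dhopas} (every ideal is a discrete $z$-ideal) and the factorization, the element $(z-\point)\cdot v$, where $Z_v = Z_{f_1}$, lies in $P$ because its zero set equals $Z_f \in Z[P]$. Now apply primality of $P$: $(z-\point)v \in P$ gives $z-\point \in P$ or $v \in P$. If $z - \point \in P$ then $P = \mathfrak{m}_\point$ and we are done. If instead $v \in P$, then $\point \notin Z_v$, contradicting that $P$ is fixed at $\point$ (every member of $Z[P]$ must contain $\point$). Hence $z - \point \in P$, so $\mathfrak{m}_\point \subseteq P \subseteq \mathfrak{m}_\point$, and $P = \mathfrak{m}_\point$ is maximal.

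The main obstacle I anticipate is bookkeeping around multiplicities and the $\CC\la x\ra$ case: in $\RR\la x\ra$ and $\CC\la x\ra$ the zero set $Z_f$ must (for $\CC\la x\ra$) be symmetric about the real axis and for $\RR\la x\ra$ consist of real points, so when I split $Z_f = \{\point\}\cup Z_{f_1}$ and reconstruct factors via Lemma \ref{impa} I need the pieces to remain admissible zero sets in the relevant ring — if $\point$ is non-real in the $\CC\la x\ra$ case one must pair it with $\bar\point$, and the generator of the corresponding maximal ideal is $(x-\point)(x-\bar\point)$ rather than $x-\point$. This is exactly the "notational idiosyncrasy" flagged in the introduction, and it is handled by consistently using Lemma \ref{impa}'s symmetric-set clause; none of it affects the logical skeleton above. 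A secondary point to state carefully is the precise meaning of "fixed": I will take it (consistently with the notation $\mathfrak{F}_p$ fixed at $p$) to mean that $\bigcap_{f\in P} Z_f \neq \emptyset$, and remark that the argument shows this intersection is then exactly $\{\point\}$.
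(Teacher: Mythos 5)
Your argument is correct and is essentially the paper's proof transported across the $Z$/$Z^{-1}$ correspondence: where you factor $f=(z-\point)^{k}f_1$ and apply primality of the ideal $P$ (together with Lemma \ref{dhopas}) to force $z-\point\in P$, the paper decomposes a generic $Z\in Z[P]$ as $(Z\setminus\{p\})\cup\{p\}$ and applies primality of the discrete $z$-filter $Z[P]$ to force $\{p\}\in Z[P]$, then concludes via Lemma \ref{bijct}. The only real addition on your side is the explicit identification $P=\mathfrak{m}_\point$ as the principal ideal generated by $z-\point$ (or by $(x-\point)(x-\bar\point)$ in the $\CC\la x\ra$ case), which the paper leaves implicit.
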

\begin{proof}
	Let $P$ be a fixed prime ideal of $\kk\la z\ra$. Then $Z[P]$ is a discrete prime $z$-filter. Since $Z[P]$ is fixed, $\bigcap Z[P]=\{p\}, p\in \kk$. $Z\in Z[P]\Rightarrow p\in Z$. %and $Z$ is a discrete closed set of $\kk\Rightarrow Z\setminus \{p\}$ is a discrete closed set of $\kk \Rightarrow 
	Now, $Z = (Z\setminus \{p\})\cup \{p\} \in Z[P] \Rightarrow\{p\}\in Z[P]$ (since $Z[P]$ is prime and $Z\setminus \{p\}\notin Z[P]$). Now the upset property of filter ensure that $Z[P]$ is a discrete $z$-ultrafilter, which gives that $Z[P]$ is a discrete $z$-ultrafilter. Now from  Lemma \ref{bijct} it follows that $P$ is maximal.
\end{proof}

%\begin{corollary}
%	If a zero set $Z$ of $\kk$ intersects every element of a discrete $Z$-ultrafilter $\mathcal{F}$ of $\kk$, then $Z \in \langle \Cal{F}\rangle^\uparrow.$
%\end{corollary}
%\begin{proof}
%	Since $Z$ intersects every element of $\Cal{F}$, so the collection $\Cal{F}\cup \{ Z\}$ has finite intersection property. Therefore $\Cal{F}\cup \{ Z\}$ generates a $z$-filter on $\kk$. Since $\langle F\rangle \subset \langle \Cal{F} \cup \{ Z\}\rangle$, we have by Lemma \ref{gnrt} that $\langle \Cal{F} \cup \{ Z\}\rangle \subset \langle \Cal{F}\rangle^\uparrow$, which implies that $Z \in \langle \Cal{F}\rangle^\uparrow$. 
%\end{proof}

Prime $z$-ideals have interesting connections with the topology of the underlying space; for example, in the case of a completely regular Hausdorff space $X$, prime $z$-ideals are related to convergence problems in the Stone-\v{C}ech compactification $\beta X$. Observe that Proposition \ref{hil gaya} essentially tells us that the structure of discrete prime $z$-filters of $\kk$ and prime $z$-filters of $\kk$ which contain discrete zero sets are same as partially ordered sets. Golasinski and Henriksen proved (see ~\cite{GH}) that the Krull dimension of $\kk\langle z \rangle$ is at least $2^{c}$ under Continnum Hypothesis, i.e., there is a chain of prime ideals in $\kk\langle z \rangle$ consisting of $2^{c}$ many elements. Then from Lemma \ref{cntd}, Proposition \ref{hil gaya} and Lemma \ref{dhopas}, we can conclude the following:
\begin{theorem}\label{2^{c} prime}
There is a chain of prime $z$-filters on $\kk$ consisting of at least $2^{c}$ many prime $z$-filters and each has a base consisting of discrete closed sets.
\end{theorem}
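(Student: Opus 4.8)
The plan is to transport the chain of prime ideals in $\kk\la z\ra$ guaranteed by Golasinski--Henriksen across to a chain of prime $z$-filters on $\kk$, using the dictionary already assembled in Subsection \ref{ssec21}. By the result of Golasinski and Henriksen, under the Continuum Hypothesis there is a strictly increasing chain $\{P_\alpha\}_{\alpha \in A}$ of prime ideals of $\kk\la z\ra$ with $|A| \ge 2^c$. First I would apply the $Z[\cdot]$ operation to each $P_\alpha$: by Lemma \ref{dhopas} every ideal of $\kk\la z\ra$ is a discrete $z$-ideal, so $Z^{-1}[Z[P_\alpha]] = P_\alpha$; in particular $P_\alpha \subsetneq P_\beta$ forces $Z[P_\alpha] \neq Z[P_\beta]$ (if the images coincided, applying $Z^{-1}$ would give $P_\alpha = P_\beta$), and of course $Z[P_\alpha] \subseteq Z[P_\beta]$ always. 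Hence $\{Z[P_\alpha]\}_{\alpha \in A}$ is a strictly increasing chain. By the lemma preceding Proposition \ref{hil gaya} (that $Z[I]$ is a discrete prime $z$-filter whenever $I$ is prime), each $Z[P_\alpha]$ is a discrete prime $z$-filter on $\kk$, and by definition each has a base of discrete closed sets.

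The second step is to pass from the discrete prime $z$-filters $Z[P_\alpha]$ to honest prime $z$-filters $\langle Z[P_\alpha]\rangle$ on $\kk$. By Proposition \ref{hil gaya}, each $\langle Z[P_\alpha]\rangle$ is a prime $z$-filter on $\kk$; since $Z[P_\alpha]$ is closed under finite intersections it is a base for $\langle Z[P_\alpha]\rangle$, so $\langle Z[P_\alpha]\rangle$ inherits a base of discrete closed sets. Strictness of the chain is preserved by Lemma \ref{cntd}: $Z[P_\alpha] \subsetneq Z[P_\beta]$ implies $\langle Z[P_\alpha]\rangle \subsetneq \langle Z[P_\beta]\rangle$. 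Thus $\{\langle Z[P_\alpha]\rangle\}_{\alpha \in A}$ is a chain of prime $z$-filters on $\kk$ of cardinality at least $2^c$, each with a base of discrete closed sets, which is exactly the assertion of the theorem.

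I expect the only genuinely substantive point to be the invocation of the Golasinski--Henriksen bound $2^c$ on the Krull dimension of $\kk\la z\ra$ under CH, which we are taking as a black box from \cite{GH}; everything else is bookkeeping with the correspondences already established. One minor subtlety worth stating carefully is that the chain should be taken through prime ideals that are genuinely proper (the zero ideal and the whole ring must be excluded, or absorbed into the chain harmlessly), so that the images $Z[P_\alpha]$ lie in $\Cal{D}(\kk)$-filters satisfying $\emptyset \notin Z[P_\alpha]$; this is automatic since a proper ideal contains no unit and a prime ideal is proper by convention. No other obstacle arises: the injectivity of $I \mapsto Z[I]$ on ideals of $\kk\la z\ra$, which is the crux, is precisely the content of Lemma \ref{dhopas}, and the preservation of strict inclusions under $\langle \cdot \rangle$ is Lemma \ref{cntd}.
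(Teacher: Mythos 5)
Your proposal is correct and follows exactly the route the paper takes: the paper deduces the theorem from the Golasinski--Henriksen chain of $2^c$ prime ideals by citing precisely Lemma \ref{dhopas} (injectivity of $I\mapsto Z[I]$), Proposition \ref{hil gaya} (primality of $\langle Z[P]\rangle$), and Lemma \ref{cntd} (preservation of strict inclusions). You have merely spelled out the bookkeeping that the paper leaves implicit.
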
 

To put Theorem \ref{2^{c} prime} in proper perspective, let us recall that in general, for a Tychonoff space $X$, there are examples of prime maximal ideals which contain no other prime ideals except themselves (for example, refer to ~\cite{M}, pp 157). Also, there are examples of spaces where only prime $z$-ideals are $M^{p}$ and $O^{p}$ but there are $2^{c}$ many prime ideals in between them (see ~\cite{GJ}, pp 200). ~\cite{Ko} seems to be one of the first investigations into the structure of prime $z$-filters. In ~\cite{M}, it is shown (Theorems 13.3, 14.1) that a chain of prime $z$-ideals in $C^*(\RR)$ is countably infinite. Recently, ~\cite{P} has shown that $\RR$ has a chain of prime $z$-filters of cardinality $c$, which seems to be the optimal result known till now.

%Theorem \ref{2^{c} prime} is a rather special result that holds 

\section{Topological properties of $\mathfrak{M}(\kk\la z\ra)$}\label{sec3}
\subsection{Far points and discrete $z$-filters}\label{ssec31}
%\begin{definition}
%	A far point of $\beta X$ is a point which does not belong to the closure of any discrete subspace of $X$.
%\end{definition}
%Existence of far points of $\beta X$ for sufficiently well-behaved spaces $X$ is guaranteed by the following
%\begin{proposition} 
%	If $X$ is a separable, locally compact, non-compact metric space in which the set of isolated points has compact closure, then $\beta X$ contains a set of $2^c$ far points which is a dense subspace of $\beta X \setminus X$.
%\end{proposition}
%It is worth noting that the above result depends on the Continuum Hypothesis. For more details, see ~\cite{W}, pp 114.
From the discussion in the last section, we see that every discrete prime $z$-filter extends to a unique $z$-ultrafilter on $\kk$. But there are $z$-ultrafilters on $\kk$ which do not contain any discrete subset of $\kk$. It follows from the following result:
\begin{theorem} (~\cite{FG})\label{12} If $X$ is a non-pseudocompact space which contains no more than $\aleph_{1}$ many dense open sets, then there is a free $z$-ultrafilter on $X$ no member of which is nowhere dense. 
%[see the book The Stone-Cech compactifications by Russel C. Walker page no 114]
\end{theorem}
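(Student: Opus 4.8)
The plan is to build the required $z$-ultrafilter by hand, steering it away from every nowhere dense zero set while keeping it free. The first step is a reformulation. If $\mathcal{U}$ is a $z$-ultrafilter on $X$, then a member $Z$ is nowhere dense exactly when $X\setminus Z$ is a dense cozero set, and conversely $Z=X\setminus C$ is a nowhere dense zero set for every dense cozero set $C$. Using the maximality property of $z$-ultrafilters (a zero set meeting every member of $\mathcal{U}$ lies in $\mathcal{U}$) one gets
\[
\text{$\mathcal{U}$ has no nowhere dense member}\iff\text{for every dense cozero set }C\text{ there is }Z\in\mathcal{U}\text{ with }Z\subseteq C .
\]
So it suffices to produce a $z$-ultrafilter which (i) contains a decreasing sequence of zero sets with empty intersection, making it free, and (ii) for each dense cozero set $C$ contains a zero set contained in $C$.

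The second step is to read off a combinatorial skeleton from non-pseudocompactness. Fixing an unbounded $f\in C(X)$, a routine extraction yields a pairwise disjoint family $\langle U_n:n\in\omega\rangle$ of nonempty cozero sets that is discrete in $X$, together with a decreasing sequence of zero sets $A_N$ with $\bigcup_{n\ge N}U_n\subseteq\operatorname{int}A_N$, with $A_N\cap U_n=\emptyset$ for $n<N$, and with $\bigcap_N A_N=\emptyset$ (take $U_n=f^{-1}(2k_n-1,2k_n+1)$ over the strictly increasing sequence of integers $k_n$ for which this set is nonempty, and $A_N=\{f\ge 2k_N-1\}$). I would then only ever use zero sets of the form $Z=\bigcup_n F_n$, where $F_n$ is a zero set of nonempty interior inside $U_n$; by disjointness and discreteness such a union is again a zero set with nonempty interior, and — crucially — unions, finite intersections, and intersections with the $A_N$ all behave coordinate by coordinate, so that a finite intersection of such sets with some $A_N$ has nonempty interior as soon as it is nonempty in a single coordinate $n\ge N$.

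The third and main step is a transfinite recursion. Using the hypothesis, enumerate the dense cozero subsets of $X$ as $\{C_\alpha:\alpha<\kappa\}$ with $\kappa\le\omega_1$ (there are no more of them than dense open sets). I would recursively choose zero sets $F^\alpha_n\subseteq C_\alpha\cap U_n$ with nonempty interior, so that $Z_\alpha:=\bigcup_n F^\alpha_n$ is a zero set contained in $C_\alpha$, maintaining the invariant: for every finite $s\subseteq\alpha+1$ there are infinitely many $n$ for which $\{F^\beta_n:\beta\in s\}$ is a $\subseteq$-chain (so its intersection still has nonempty interior). At a successor one simply shrinks inside $C_{\alpha+1}\cap\operatorname{int}F^\alpha_n$. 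The limit stage $\lambda<\omega_1$ is the heart of the matter: fix a strictly increasing cofinal sequence $\langle\lambda_j\rangle_{j<\omega}$ in $\lambda$ and an increasing $m\colon\omega\to\omega$ with $m(n)\to\infty$, and take $F^\lambda_n\subseteq C_\lambda\cap\operatorname{int}F^{\lambda_{m(n)}}_n$ with nonempty interior, so that coordinate $n$ "remembers" only the $m(n)$-th earlier approximation; since $m(n)\to\infty$, the chain condition for any fixed finite set of earlier stages is then inherited in all but finitely many coordinates. Arranging the cofinal sequences coherently through the earlier limits so that the invariant genuinely survives is the delicate bookkeeping, and this diagonalization over the "rooms" $U_n$ — the Fine–Gillman device — is the step I expect to require the most care; a decreasing $\omega$-chain of zero sets of nonempty interior can degenerate to a single point, and it is exactly the discrete family $\{U_n\}$ together with $m(n)\to\infty$ that prevents this. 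Note that the Continuum Hypothesis plays no role in this implication itself; it enters only when one checks that $\kk$ has at most $\aleph_1$ dense open subsets, so that the theorem is applicable there.

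Finally, set $\mathcal{B}=\{A_N:N\in\omega\}\cup\{Z_\alpha:\alpha<\kappa\}$. By the invariant every finite subfamily of $\mathcal{B}$ has nonempty intersection, so $\mathcal{B}$ is a $z$-filter subbase; extend the $z$-filter it generates to a $z$-ultrafilter $\mathcal{U}$ by Zorn's lemma. Then $\mathcal{U}$ is free because it contains all the $A_N$, and for each dense cozero set $C=C_\alpha$ it contains $Z_\alpha\subseteq C_\alpha$; by the first step, $\mathcal{U}$ has no nowhere dense member, as required.
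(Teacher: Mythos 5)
The paper does not actually prove this statement: it is quoted verbatim from Fine--Gillman \cite{FG}, so your proposal can only be measured against the standard argument, not against anything in the text. Your architecture is the right one and is essentially the classical one. The reformulation is correct (a zero set is nowhere dense iff its complement is a dense cozero set, and by the maximality property a $z$-ultrafilter has no nowhere dense member iff it contains a subset of every dense cozero set); the extraction of the discrete family $U_n$ and the decreasing zero sets $A_N$ with empty intersection from an unbounded function is correct, as is the assertion that $\bigcup_n F_n$ with $F_n\subseteq U_n$ a zero set is again a zero set (this uses the local finiteness of the intervals $(2k_n-1,2k_n+1)$ in $\RR$); and the recursion of length at most $\omega_1$ over the dense cozero sets, diagonalizing at limits of countable cofinality, is exactly where the $\aleph_1$ hypothesis is used. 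Your remark that CH enters only in verifying the hypothesis for $\mathbb{K}$ is also correct. (You tacitly use complete regularity to place a zero set with nonempty interior inside an arbitrary nonempty open set; harmless in the Tychonoff setting intended here.)

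The genuine gap is the one you flag yourself: the recursion does not close with the invariant as stated. ``For every finite $s\subseteq\alpha+1$ there are infinitely many $n$ for which $\{F^\beta_n:\beta\in s\}$ is a $\subseteq$-chain'' is both too weak and not demonstrably preserved. Already at a successor, knowing that $\{F^\beta_n:\beta\in s'\cup\{\alpha\}\}$ is a chain does not make $F^\alpha_n$ its minimum, so $F^{\alpha+1}_n\subseteq F^\alpha_n$ need not be comparable with the other members. At a limit $\lambda$ the problem is worse: you would need $\{F^{\beta}_n:\beta\in s\}\cup\{F^{\lambda_{m(n)}}_n\}$ to be a chain with minimum $F^{\lambda_{m(n)}}_n$ for infinitely many $n$, where the superscript varies with $n$; this does not follow from instances of an invariant whose ``infinitely many $n$'' is quantified separately for each fixed finite set. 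The repair is to strengthen the invariant to a coordinatewise tower: for all $\beta<\alpha$ one has $F^\alpha_n\subseteq F^\beta_n$ for all but finitely many $n$, and every $F^\alpha_n$ has nonempty interior. This is trivially preserved at successors, and at a limit of cofinality $\omega$ it is preserved by your diagonalization provided $m(n)$ grows slowly enough to dominate the finitely-many-exceptions thresholds $N(j,j')$ already accumulated for the pairs $\lambda_j<\lambda_{j'}$; then for each $\beta<\lambda$ one gets $F^\lambda_n\subseteq F^{\lambda_{m(n)}}_n\subseteq F^{\lambda_j}_n\subseteq F^\beta_n$ for almost all $n$. With the tower invariant, each finite family $\{F^\beta_n:\beta\in s\}$ is, for almost all $n$, a chain decreasing in the ordinal order, its intersection is $F^{\max s}_n$, which has nonempty interior and lies in $A_N$ for $n\geq N$, and the finite intersection property and the conclusion follow as you say. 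So the plan is sound and is the Fine--Gillman one, but as written the induction hypothesis does not survive the limit step and must be replaced by the mod-finite tower condition.
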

%The above theorem ensures the existence of far points, in the growth of $X$, which we have clearly mentioned for non-pseudocompact spaces $X$. 

Our space $\kk$ satisfies all properties of the above result and hence $\beta \kk$ has many far points. Obviously no discrete prime $z$-filter on $\kk$ can be extended to the kind of $z$-ultrafilters on $\kk$ mentioned in Theorem \ref{12}. But if we restrict our collection of $z$-ultrafilters on $\kk$ from $\beta \kk$ to $\beta \kk\setminus Q(\kk)$, where $Q(\kk)$ is the collection of all far points of $\beta \kk$, then for this restricted space the following proposition ensures that every $z$-ultrafilter on $\kk$ can be achieved as an extension of some discrete $z$-ultrafilter. 
%\begin{proposition}\label{propi} For every $z$-ultrafilter $\mathcal{U}^{p}$ on $\kk$ where $p\in \beta \kk\setminus Q(\kk)$, there exists a unique discrete $z$-ultrafilter $\mathcal{F}$ on $\kk$ such that $\mathcal{F}$ extends to $\mathcal{U}^{p}$.
%\end{proposition}
%\begin{proof} Let $\mathcal{F}=\{Z\in \mathcal{U}^{p}~|~ Z\in \Cal{D}(\kk)\}$. Since $p\in \beta\mathbb{K}\setminus Q(\mathbb{K})$, so $A$ is non-empty. It follows immediately that $A$ is a discrete $z$-filter on $\mathbb{K}$. Let us take two discrete zero sets $Z_{1},Z_{2}\in \Cal{D}(\mathbb{K})$ such that $Z_{1}\cup Z_{2}\in \mathcal{F}\subseteq \mathcal{U}^{p}$. As $\mathcal{U}^{p}$ is a prime $z$-filter it automatically shows that either $Z_{1}\in \mathcal{F}$ or $Z_{2}\in \mathcal{F}$. Hence $\mathcal{F}$ becomes a prime discrete $z$-filter on $\kk$. Consequently from Proposition \ref{gnrt} we can conclude that $\mathcal{F}$ is contained in a unique $z$-ultrafilter on $\kk$, i.e., $\mathcal{U}^{p}$ in this particular case. Now we intend to show that $\mathcal{F}$ is a discrete $z$-ultrafilter on $\kk$. Suppose $\mathcal{F}'$ be a discrete $z$-ultrafilter on $\kk$ such that $\mathcal{F}\subseteq \mathcal{F}'$. Then Proposition \ref{gnrt} implies also that $\mathcal{F}'$ extends to a unique $z$-ultrafilter on $\kk$ and obviously in this case that particular $z$-ultrafilter will be $\mathcal{U}^{p}$. Then from the construction of $\mathcal{F}$ it follows that $\mathcal{F}'\subseteq \mathcal{F}$. The uniqueness of $\mathcal{F}$ also follows from the construction of $\mathcal{F}$. 
\begin{proposition}\label{propi} For every $z$-ultrafilter $\mathcal{U}^{p}$ on $\kk$ where $p\in \beta \kk\setminus Q(\kk)$ there exists a unique discrete $z$-ultrafilter $\mathcal{F}$ on $\kk$ such that $\langle\mathcal{F}\rangle$ extends to $\mathcal{U}^{p}$.
\end{proposition}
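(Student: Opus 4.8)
The plan is to exhibit the required filter explicitly and then verify everything directly. I would set $\mathcal{F}_{0} := \mathcal{U}^{p} \cap \Cal{D}(\kk)$, the family of all closed discrete subsets of $\kk$ that belong to $\mathcal{U}^{p}$ (together with the formal member $\kk$), and show that $\mathcal{F}_{0}$ is the unique discrete $z$-ultrafilter whose generated $z$-filter lies inside $\mathcal{U}^{p}$. The first step is to unwind the hypothesis $p \in \beta\kk \setminus Q(\kk)$: by the definition of a far point, $p$ \emph{not} being far means precisely that $p \in \cl_{\beta\kk} D_{0}$ for some closed discrete set $D_{0} \subseteq \kk$; since $\kk$ is a metric space every closed set is a zero set, so $D_{0}$ is a zero set and $p \in \cl_{\beta\kk} D_{0}$ forces $D_{0} \in \mathcal{U}^{p}$, hence $D_{0} \in \mathcal{F}_{0}$ (and $D_{0}\neq\emptyset$ since $\emptyset\notin\mathcal{U}^{p}$). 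Verifying that $\mathcal{F}_{0}$ satisfies the three axioms of Definition \ref{dzf} is then routine: $\emptyset \notin \mathcal{U}^{p}$; a finite intersection of closed discrete sets is closed discrete and $\mathcal{U}^{p}$ is closed under finite intersections; and if $D \in \Cal{D}(\kk)$ contains a member of $\mathcal{F}_{0}$, then $D$ is a zero set lying above a member of the $z$-filter $\mathcal{U}^{p}$, so $D \in \mathcal{U}^{p}$.

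The substantive step is to show $\mathcal{F}_{0}$ is maximal, i.e.\ a discrete $z$-ultrafilter. Here I would use the elementary reformulation that a discrete $z$-filter $\mathcal{F}$ is maximal exactly when every $E \in \Cal{D}(\kk)$ meeting every member of $\mathcal{F}$ already lies in $\mathcal{F}$. So fix such an $E$; in particular $E \cap D_{0} \neq \emptyset$. The crucial observation is that, because $D_{0}$ is closed \emph{and} discrete, every subset of $D_{0}$ is again closed and discrete in $\kk$ (a subset of $D_{0}$ inherits the absence of accumulation points in $\kk$), so both $D_{0} \cap E$ and $D_{0} \setminus E$ are zero sets. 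Now $D_{0} = (D_{0} \cap E) \cup (D_{0} \setminus E) \in \mathcal{U}^{p}$, and $\mathcal{U}^{p}$, being a $z$-ultrafilter, is a prime $z$-filter; hence $D_{0} \cap E \in \mathcal{U}^{p}$ or $D_{0} \setminus E \in \mathcal{U}^{p}$. The second alternative is impossible, for then $D_{0} \setminus E$ would be a member of $\mathcal{F}_{0}$ disjoint from $E$, contradicting the choice of $E$; therefore $D_{0} \cap E \in \mathcal{U}^{p}$, and since $D_{0} \cap E \subseteq E$ with $E \in \Cal{D}(\kk)$, the up-set property gives $E \in \mathcal{U}^{p} \cap \Cal{D}(\kk) = \mathcal{F}_{0}$. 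This proves $\mathcal{F}_{0}$ is a discrete $z$-ultrafilter.

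What remains is formal. Since $\mathcal{F}_{0}$ is closed under finite intersections it is a base for $\langle\mathcal{F}_{0}\rangle$, so every element of $\langle\mathcal{F}_{0}\rangle$ contains a member of $\mathcal{F}_{0} \subseteq \mathcal{U}^{p}$, and $\mathcal{U}^{p}$ being a $z$-filter gives $\langle\mathcal{F}_{0}\rangle \subseteq \mathcal{U}^{p}$. Moreover a discrete $z$-ultrafilter is a discrete prime $z$-filter, so by Proposition \ref{hil gaya} the $z$-filter $\langle\mathcal{F}_{0}\rangle$ is prime, and by Corollary \ref{gnrt} it extends to a unique $z$-ultrafilter, which must therefore be $\mathcal{U}^{p}$; this is the assertion that $\langle\mathcal{F}_{0}\rangle$ extends to $\mathcal{U}^{p}$. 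For uniqueness, if $\mathcal{F}$ is any discrete $z$-ultrafilter with $\langle\mathcal{F}\rangle \subseteq \mathcal{U}^{p}$, then $\mathcal{F} \subseteq \langle\mathcal{F}\rangle \subseteq \mathcal{U}^{p}$ and its members are closed discrete, so $\mathcal{F} \subseteq \mathcal{U}^{p} \cap \Cal{D}(\kk) = \mathcal{F}_{0}$, and maximality of $\mathcal{F}$ forces $\mathcal{F} = \mathcal{F}_{0}$.

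The only point with real content — and the step I would be careful about — is the maximality argument: it rests on $D_{0}$ being closed \emph{and} discrete, since that (and not mere closedness of $D_{0}$) is exactly what makes $D_{0}\cap E$ and $D_{0}\setminus E$ zero sets and hence available as inputs to the primality of $\mathcal{U}^{p}$. Everything else is a matter of unwinding the definition of a far point and invoking Proposition \ref{hil gaya} and Corollary \ref{gnrt}.
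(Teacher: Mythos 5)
Your proof is correct and is built on the same construction as the paper's: both take $\mathcal{F}_0=\mathcal{U}^p\cap\Cal{D}(\kk)$, use the definition of a far point to produce a non-empty discrete member $D_0$, and then invoke Proposition \ref{hil gaya} and Corollary \ref{gnrt} for the extension and uniqueness statements. The one place you genuinely diverge is the maximality of $\mathcal{F}_0$. The paper argues indirectly: it supposes a discrete $z$-ultrafilter $\mathcal{F}'\supseteq\mathcal{F}_0$, notes via Corollary \ref{gnrt} that $\mathcal{F}'$ must also extend to $\mathcal{U}^p$, and concludes $\mathcal{F}'\subseteq\mathcal{U}^p\cap\Cal{D}(\kk)=\mathcal{F}_0$. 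You instead verify maximality directly through the ``meets every member'' criterion, using the splitting $D_0=(D_0\cap E)\cup(D_0\setminus E)$ together with primality of $\mathcal{U}^p$; the load-bearing observation, which you correctly isolate, is that every subset of a closed discrete set is again closed and discrete in $\kk$, hence a zero set and a legitimate input to primality. Your route is more self-contained and makes explicit where discreteness (as opposed to mere closedness) is used, while the paper's route is shorter because it recycles the unique-extension machinery it has already established. Both arguments are valid.
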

\begin{proof} Let $\mathcal{F}=\{Z\in \mathcal{U}^{p} : Z\in \Cal{D}(\kk)\}$. Since $p\in \beta\mathbb{K}\setminus Q(\mathbb{K})$, so $A$ is non-empty. It follows immediately that $A$ is a discrete $z$-filter on $\mathbb{K}$. Let us take two discrete zero sets $Z_{1},Z_{2}\in \Cal{D}(\mathbb{K})$ such that $Z_{1}\cup Z_{2}\in \mathcal{F}\subseteq \mathcal{U}^{p}$. As $\mathcal{U}^{p}$ is a prime $z$-filter it automatically shows that either $Z_{1}\in \mathcal{F}$ or $Z_{2}\in \mathcal{F}$. Hence $\mathcal{F}$ becomes a prime discrete $z$-filter on $\kk$. Consequently from Proposition \ref{gnrt} we can conclude that $\mathcal{F}$ is contained in a unique $z$-ultrafilter on $\kk$, i.e., $\mathcal{U}^{p}$ in this particular case. Now we intend to show that $\mathcal{F}$ is a discrete $z$-ultrafilter on $\kk$. Suppose $\mathcal{F}'$ is a discrete $z$-ultrafilter on $\kk$ such that $\mathcal{F}\subseteq \mathcal{F}'$. Then Proposition \ref{gnrt} implies also that $\mathcal{F}'$ extends to a unique $z$-ultrafilter on $\kk$ and obviously in this case that particular $z$-ultrafilter will be $\mathcal{U}^{p}$. Then from the construction of $\mathcal{F}$ it follows that $\mathcal{F}'\subseteq \mathcal{F}$. The uniqueness of $\mathcal{F}$ also follows from the construction of $\mathcal{F}$. 
\end{proof}

Proposition \ref{propi} is true also for prime $z$-filters on $\kk$. Since each prime $z$-filter is contained in a unique $z$-ultrafilter so Theorem \ref{2^{c} prime} can be modified as the following.

\begin{theorem}
If $p\in \beta\kk\setminus Q(\kk)$ then the $z$-ultrafilter $\mathcal{U}^{p}$ of $\kk$ contains a chain of prime $z$-filters on $\kk$ consisting of at least $2^{c}$ many elements.
\end{theorem}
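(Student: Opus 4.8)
The plan is to verify the statement for an arbitrary, \emph{fixed} $p\in\beta\kk\setminus Q(\kk)$—producing the chain inside the prescribed $\mathcal{U}^p$—rather than building one chain and afterwards identifying the ambient ultrafilter. The whole problem will be pushed, via the filter--ideal dictionary of Section \ref{sec2}, into the prime spectrum of $\kk\la z\ra$ below a single maximal ideal canonically attached to $p$.

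First I fix $p$ and let $\mathcal{F}_p$ be the unique discrete $z$-ultrafilter supplied by Proposition \ref{propi}, so that $\mathcal{U}^p=\la\mathcal{F}_p\ra$; I then set $M_p:=Z^{-1}[\mathcal{F}_p]$, which is a maximal ideal of $\kk\la z\ra$ by Lemma \ref{bijct}. Suppose for the moment that I am handed a strictly increasing chain $\{P_\alpha\}_{\alpha<\kappa}$ of prime ideals of $\kk\la z\ra$ with $P_\alpha\subseteq M_p$ for all $\alpha$. Applying $Z[\cdot]$ produces discrete prime $z$-filters $Z[P_\alpha]\subseteq Z[M_p]=\mathcal{F}_p$; this assignment is strictly order-preserving, since Lemma \ref{dhopas} gives $Z^{-1}[Z[P_\alpha]]=P_\alpha$, so the $Z[P_\alpha]$ are pairwise distinct and nested. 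Passing to generated $z$-filters and invoking Lemma \ref{cntd}, the family $\la Z[P_\alpha]\ra$ is again strictly increasing, each member is a prime $z$-filter by Proposition \ref{hil gaya}, and each satisfies $\la Z[P_\alpha]\ra\subseteq\la\mathcal{F}_p\ra=\mathcal{U}^p$. Thus any chain of length $\kappa$ of primes below $M_p$ transports to a chain of length $\kappa$ of prime $z$-filters inside $\mathcal{U}^p$, and the theorem is reduced to a single algebraic assertion: the prescribed maximal ideal $M_p$ contains a chain of at least $2^c$ prime ideals.

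For this I would appeal to Henriksen's analysis of the prime spectrum of $\kk\la z\ra$, as quantified by Golasinski--Henriksen \cite{GH}. The crucial point is that $p$ is \emph{not} a far point, so $\mathcal{F}_p$—equivalently $M_p$—is carried by a genuine closed discrete set $D\subseteq\kk$; the long prime chains are assembled locally from the orders of vanishing (and rates of decay) of functions along such a sequence, and hence can be realized beneath the \emph{given} $M_p$ rather than beneath some incidental maximal ideal. Under the Continuum Hypothesis this yields a chain of at least $2^c$ primes contained in $M_p$, which completes the argument once fed through the transport of the previous paragraph.

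The main obstacle is precisely this last input. Theorem \ref{2^{c} prime}, and the Golasinski--Henriksen bound \cite{GH} behind it, only assert a chain of length $2^c$ \emph{somewhere} in $\spec\kk\la z\ra$, whereas the universal form sought here instead demands such a chain below the \emph{given} $M_p$. I therefore expect the substance of the proof to be a homogeneity statement for free maximal ideals—namely that Henriksen's construction may be run verbatim on the discrete set underlying any non-far $\mathcal{F}_p$—after which the order-preserving bookkeeping of the second paragraph is entirely routine by comparison.
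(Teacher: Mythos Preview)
Your approach is exactly the paper's: the paper's entire ``proof'' is the sentence preceding the theorem, which points to Theorem \ref{2^{c} prime} together with the fact that every prime $z$-filter sits in a unique $z$-ultrafilter; your transport via $Z[\cdot]$, Lemma \ref{dhopas}, Lemma \ref{cntd}, and Proposition \ref{hil gaya} simply spells out what the paper leaves implicit. (One harmless slip: Proposition \ref{propi} gives $\langle\mathcal{F}_p\rangle^\uparrow=\mathcal{U}^p$, not $\langle\mathcal{F}_p\rangle=\mathcal{U}^p$; only the containment $\langle Z[P_\alpha]\rangle\subseteq\mathcal{U}^p$ is needed, and that survives.)

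The obstacle you flag---that one needs the $2^c$-chain of primes below the \emph{prescribed} $M_p$, not merely somewhere in $\operatorname{Spec}\kk\la z\ra$---is genuine, and the paper does not address it any better than you do: it simply asserts the theorem as a ``modification'' of Theorem \ref{2^{c} prime}. Your instinct is correct that Henriksen's construction (and the Golasi\'nski--Henriksen version in \cite{GH}) is in fact carried out below an \emph{arbitrary} free maximal ideal, using orders of vanishing along a discrete zero set lying in that ideal, so the local input you need is available; but neither you nor the paper states this explicitly.
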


\subsection{The relation between $\beta\kk$ and $\theta\kk$}\label{3.1} %The ring $\kk\la z\ra$ contains no divisor of zero (actually an integral domain). As a consequence, as mentioned before, its structure space loses Hausdorff property and becomes a compact $T_{1}$-space only.

Recall that there is a one-one correspondence between the points of $\beta\kk$ and the $z$-ultrafilters on $\kk$, each $z$-ultrafilter converging to its corresponding point. Keeping this correspondence in mind, we define a map from the structure space $\mathfrak{M}(\kk\la z\ra)$ (henceforth denoted simply by $\mathfrak{M}_\kk$) to $\beta \kk\setminus Q(\kk)$ by 
$$\Phi: \mathfrak{M}_\kk \longrightarrow \beta\mathbb{K}\setminus Q(\mathbb{K}),~~ \Phi(M)=\langle Z[M]\rangle^\uparrow, $$
where $\langle Z[M]\rangle^\uparrow$ denotes the unique $z$-ultrafilter on $\kk$ to which the discrete $z$-ultrafilter $Z[M]$ can be extended. It is quite clear that this map is an injective map. We quickly write down some of its properties:

Surjectivity of $\Phi$: It follows immediately from Proposition \ref{propi}.
%Let $\Cal{A}_{p}$ be a $z$-ultrafilter on $\kk$ for some $p\in \beta \kk\setminus Q(\kk)$. Let $\mathcal{F}_p = \{Z\in \Cal{A}_{p}~|~ Z~\mbox{is a discrete subset of } \kk\}$. Proposition \ref{propi} ensures that $\mathcal{F}_p\neq \emptyset$. Then $\mathcal{F}_p$ is a discrete prime $z$-filter on $\kk$. We claim that $\mathcal{F}^p$ is a discrete $z$-ultrafilter on $\kk$. Suppose not. Then $\mathcal{F}^p$ is contained in a discrete $z$-ultrafilter, say $\mathcal{F}'^p$, on $\kk$. Obviously, $\mathcal{F}'^p \not\subset \Cal{A}^{p}$. Then $\langle\mathcal{F}_p\rangle\subseteq \langle\mathcal{F}'_p\rangle$ in $\kk$ and $\langle\mathcal{F}'_p\rangle^\uparrow \neq \Cal{A}_{p}$ but $\langle\mathcal{F}_p\rangle$ is prime $z$-filter and hence is contained in a unique $z$-ultrafilter $\Cal{A}_{p}$, which contradicts that $\langle\mathcal{F}'_p\rangle^\uparrow$ is contained in a $z$-ultrafilter other than $\Cal{A}_{p}$.\newline

Closedness of $\Phi$: A basic closed set of $\mathfrak{M}_\kk$ is of the form $M(f)=\{M\in \mathfrak{M}_{\kk} : f\in M\}$, $f\in \kk\la z\ra$. We have, 
\begin{align*}
\Phi(M(f))&=\{\Phi(M) : f\in M\} = \{\langle Z[M]\rangle^\uparrow : f\in M\}\\
&=\{\langle Z[M]\rangle^\uparrow : Z_f\in Z[M]\} =\{\Cal{A}^{p}\in \beta\kk : Z_f\in Z[M]~\&~\langle Z[M]\rangle^\uparrow=\Cal{A}^{p}\}\\
&=\{\Cal{A}^{p}\in \beta\kk : Z_f\in\Cal{A}^{p}, p\notin Q(\kk)\} =\{\Cal{A}^{p} : p \in \beta\kk \setminus Q(\kk), Z_f\in \Cal{A}^{p}\}
\end{align*} 

which is a basic closed set in $\beta \kk \setminus Q(\kk)$. Therefore $\Phi$ is a closed map.
%\textbf{Continuity:} Let for any zero set $Z\in Z[\mathbb{C}]$, $\overline{Z}=\{A^{p}\in \beta\mathbb{C}~|~ Z\in A^{p}\}$. Then $\overline{Z}$ is a basic closed set in $\beta\mathbb{C}$. Now,
%\begin{align*}
%\Phi^{-1}[\overline{Z}]&=\{\Phi^{-1}(A^{p})~|~Z\in A^{p}\}\\
%&=\{M\in m(A)~|~Z\in \langleZ[M]\rangle^\uparrow\}\\
%\end{align*} 
%\\
%Now let $A_{Z}=\{f\in A(\mathbb{C}~|~ Z_f\subseteq Z\}$. Then,\\
%\begin{align*}
%\Phi^{-1}[\overline{Z}]&=\bigcap_{f\in A_{Z}}\{M\in m(A)~|~Z_f\in \langleZ[M]\rangle^\uparrow\}\\
%&=\bigcap_{f\in A_{Z}}\{M\in m(A)~|~f\in M\}\\
%&=\bigcap_{f\in A_{Z}}M(f)
%\end{align*}
%\\
%Therefore $\Phi^{-1}[\overline{Z}]$ is closed and hence $\Phi$ is continuous. Hence we can conclude the following.
This gives us

\begin{theorem} \label{continuous image} There is a bijective continuous map from $\beta \kk\setminus Q(\kk)$ onto  $\mathfrak{M}_{\kk}$, the structure space of $\kk\la z\ra$.
\end{theorem}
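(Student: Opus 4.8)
The plan is to package the work that has already been done into the statement of Theorem \ref{continuous image}. The map $\Phi : \mathfrak{M}_\kk \to \beta\kk\setminus Q(\kk)$ defined by $\Phi(M) = \langle Z[M]\rangle^\uparrow$ has, at this point in the paper, been checked to be injective, surjective (via Proposition \ref{propi}), and closed (via the displayed chain of equalities identifying $\Phi(M(f))$ with a basic closed set of $\beta\kk\setminus Q(\kk)$). So what remains is purely formal: a continuous bijection in the required direction is exactly $\Phi^{-1}$, and to say $\Phi^{-1}$ is continuous is the same as to say $\Phi$ is closed (equivalently open, since $\Phi$ is a bijection). Thus the proof is essentially a one-line assembly.

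First I would recall that $\Phi$ is a well-defined bijection: well-definedness and injectivity come from Lemma \ref{bijct} together with Corollary \ref{gnrt} (each maximal ideal $M$ gives the discrete $z$-ultrafilter $Z[M]$, which generates a \emph{unique} $z$-ultrafilter $\langle Z[M]\rangle^\uparrow$, and $M \mapsto Z[M]$ is a bijection onto discrete $z$-ultrafilters), and surjectivity is precisely Proposition \ref{propi} applied to each $p \in \beta\kk\setminus Q(\kk)$. Next I would point to the computation already carried out in the text: for every $f \in \kk\la z\ra$, $\Phi(M(f))$ equals $\{\Cal{A}^p : p \in \beta\kk\setminus Q(\kk),\ Z_f \in \Cal{A}^p\}$, which is a basic closed set of $\beta\kk\setminus Q(\kk)$ in the Stone topology (it is the trace on $\beta\kk\setminus Q(\kk)$ of the closed set $\{p \in \beta\kk : f \text{ vanishes on a neighbourhood of } p\}$, or more precisely of the closed set determined by $Z_f$). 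Since the sets $M(f)$ form a base for the closed sets of $\mathfrak{M}_\kk$ and $\Phi$ is a bijection, $\Phi$ carries basic closed sets to closed sets, hence carries all closed sets to closed sets; that is, $\Phi$ is a closed map.

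Finally, because $\Phi$ is a closed continuous-free bijection, its inverse $\Psi := \Phi^{-1} : \beta\kk\setminus Q(\kk) \to \mathfrak{M}_\kk$ is continuous: for a closed set $C \subseteq \mathfrak{M}_\kk$, $\Psi^{-1}(C) = \Phi(C)$ is closed. And $\Psi$ is a bijection since $\Phi$ is. This $\Psi$ is the asserted bijective continuous map from $\beta\kk\setminus Q(\kk)$ onto $\mathfrak{M}_\kk$, completing the proof.

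I do not expect any real obstacle here; the genuine content is entirely in Proposition \ref{propi} (surjectivity) and in the closedness computation, both of which are already in hand. The only point requiring a sentence of care is spelling out why $\{\Cal{A}^p : p \notin Q(\kk),\ Z_f \in \Cal{A}^p\}$ is closed in the subspace $\beta\kk\setminus Q(\kk)$ — this is because in $\beta\kk$ the set of $z$-ultrafilters containing a fixed zero set $Z_f$ is a basic closed set (standard theory of $\beta X$, cf.\ \cite{GJ}, Section 2.13), and we are simply intersecting with the subspace. One should also note explicitly that no separation hypothesis on $\mathfrak{M}_\kk$ is needed or available (it is only $T_1$, not Hausdorff), so the argument must go through "closed bijection $\Rightarrow$ continuous inverse" rather than any compactness-plus-Hausdorff shortcut; fortunately that implication is valid for arbitrary spaces.
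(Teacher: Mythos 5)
Your proposal is correct and follows essentially the same route as the paper: the theorem is obtained by assembling the already-established injectivity, surjectivity (Proposition \ref{propi}), and closedness of $\Phi$, and then observing that the inverse of a closed bijection is continuous. The paper presents exactly this as the discussion preceding the theorem statement, so no further comment is needed.
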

\begin{definition}\label{thetake}
For each discrete $z$-ultrafilter on $\kk$, assign a point and make an indexed set for the set of all such discrete $z$-ultrafilers. We call this indexed set $\theta \kk$. For each point $p\in \theta \kk$, let $\Cal{D}^{p}$ denote the corresponding discrete $z$-ultrafilter on $\kk$. For a closed discrete subset $Z$ of $\kk$, denote $\overline{Z}=\{p\in \theta \kk : Z\in \Cal{D}^{p}\}$. We topologize $\theta \kk$ by considering all $\overline{Z}$ as the basic closed sets.
\end{definition}
%Now we consider the set of all discrete $z$-ultrafilters. For each discrete $z$-ultrafilter we assign a point and make an indexed set for the set of all discrete $z$-ultrafilers of $\kk$. Let this indexed set be named $\theta \kk$. For each point $p\in \theta \kk$, let $\Cal{D}^{p}$ denote the corresponding discrete $z$-ultrafilter on $\kk$. Let $Z$ be a closed discrete subset of $\kk$ and $\overline{Z}=\{p\in \theta \kk : Z\in \Cal{D}^{p}\}$. We topologize $\theta \kk$ by considering all $\overline{Z}$ as the basic closed sets.
%[ As like stone-cech compactifications. for stone cech we take $Z$ as all zero subsets but here we take alls discrete closed sub sets].
\begin{lemma}\label{intp}
For distinct $Z_{1},Z_{2}\in \Cal{D}(\kk)$, $\overline{Z_{1}\cap Z_{2}} = \overline{Z_{1}} \cap \overline{Z_{2}}$.
\end{lemma}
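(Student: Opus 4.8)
The plan is to prove the set equality $\overline{Z_1 \cap Z_2} = \overline{Z_1} \cap \overline{Z_2}$ by a direct double inclusion, unwinding the definition $\overline{Z} = \{p \in \theta\kk : Z \in \Cal{D}^p\}$ and using the filter axioms from Definition \ref{dzf} together with the fact that each $\Cal{D}^p$ is a discrete $z$-ultrafilter, hence in particular a discrete prime $z$-filter (as noted right after Definition \ref{dpzf}).

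\textbf{Inclusion $\overline{Z_1 \cap Z_2} \subseteq \overline{Z_1} \cap \overline{Z_2}$.} First I would observe that $Z_1 \cap Z_2 \in \Cal{D}(\kk)$, since the intersection of two closed discrete subsets of $\kk$ is again closed and discrete, so $\overline{Z_1 \cap Z_2}$ is a legitimate basic closed set. Now take $p \in \overline{Z_1 \cap Z_2}$, i.e. $Z_1 \cap Z_2 \in \Cal{D}^p$. Since $Z_1 \cap Z_2 \subseteq Z_1$ and $Z_1 \in \Cal{D}(\kk)$, the upset axiom (iii) of Definition \ref{dzf} gives $Z_1 \in \Cal{D}^p$; similarly $Z_2 \in \Cal{D}^p$. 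Hence $p \in \overline{Z_1} \cap \overline{Z_2}$.

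\textbf{Inclusion $\overline{Z_1} \cap \overline{Z_2} \subseteq \overline{Z_1 \cap Z_2}$.} Take $p \in \overline{Z_1} \cap \overline{Z_2}$, so $Z_1 \in \Cal{D}^p$ and $Z_2 \in \Cal{D}^p$. By the finite intersection axiom (ii) of Definition \ref{dzf}, $Z_1 \cap Z_2 \in \Cal{D}^p$, so $p \in \overline{Z_1 \cap Z_2}$. This direction actually uses nothing beyond $\Cal{D}^p$ being a discrete $z$-filter, and the first direction only uses the upset property, so the primality/ultrafilter strength is not even needed here — the lemma holds for the $\overline{\phantom{Z}}$ operator associated to any family of discrete $z$-filters, but stating it for $\theta\kk$ is what is needed downstream.

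This argument is entirely routine; there is no real obstacle. The only point requiring a moment's care is the preliminary check that $Z_1 \cap Z_2$ lies in $\Cal{D}(\kk)$ so that the notation $\overline{Z_1 \cap Z_2}$ makes sense, but this is immediate from elementary point-set topology (a subset of a discrete set is discrete, and a finite intersection of closed sets is closed). I would present the proof as two short displayed chains of implications or simply as the two inclusions in prose, noting explicitly which filter axiom is invoked in each direction.
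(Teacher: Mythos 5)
Your proof is correct and follows essentially the same route as the paper's: the forward inclusion via the upset axiom and the reverse inclusion via closure under finite intersections of the filter $\Cal{D}^p$. If anything, your version is slightly more careful, since you explicitly verify that $Z_1\cap Z_2\in\Cal{D}(\kk)$ so that $\overline{Z_1\cap Z_2}$ is well-defined, a point the paper leaves implicit.
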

\begin{proof}
One part is trivial, i.e., $\overline{Z_1}\cap \overline{Z_2} \subseteq \overline{Z_{1}} \cap \overline{Z_{2}}$. For the reverse part, $\overline{Z_1} \cap \overline{Z_{2}} =\{p\in \theta \kk : Z_{1},Z_{2}\in \Cal{D}^{p}\}\subseteq \{p\in \theta \kk : Z_{1}\cap Z_{2}\in \Cal{D}^{p}\}=\overline{Z_{1}\cap Z_{2}}$. %Hence it follows.
\end{proof}
\begin{theorem}\label{ct1}
$\theta \kk$ is a compact $T_{1}$ space.
\end{theorem}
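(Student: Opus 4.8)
The plan is to verify compactness via the finite-intersection characterization on basic closed sets, and $T_1$ by exhibiting, for any two distinct points, a basic closed set containing one but not the other. I would begin with compactness. Since the sets $\overline{Z}$, $Z \in \Cal{D}(\kk)$, form a base for the closed sets of $\theta\kk$, it suffices to show that any family $\{\overline{Z_\alpha}\}_{\alpha \in A}$ of basic closed sets with the finite intersection property has nonempty total intersection. Using Lemma \ref{intp}, a finite intersection $\overline{Z_{\alpha_1}} \cap \dots \cap \overline{Z_{\alpha_n}}$ equals $\overline{Z_{\alpha_1} \cap \dots \cap Z_{\alpha_n}}$, which being nonempty forces $Z_{\alpha_1} \cap \dots \cap Z_{\alpha_n}$ to be a nonempty member of $\Cal{D}(\kk)$ (note $\emptyset \notin \Cal{D}^p$ for any $p$). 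Hence the family $\{Z_\alpha\}$ has the finite intersection property inside $\Cal{D}(\kk)$, so it generates a discrete $z$-filter, which by Zorn's lemma extends to a discrete $z$-ultrafilter $\Cal{D}^p$ for some $p \in \theta\kk$. Then each $Z_\alpha \in \Cal{D}^p$, i.e. $p \in \bigcap_\alpha \overline{Z_\alpha}$, establishing compactness.

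For the $T_1$ property, take distinct points $p, q \in \theta\kk$, so $\Cal{D}^p \neq \Cal{D}^q$ as discrete $z$-ultrafilters. Since neither contains the other (two distinct maximal objects in the same poset are incomparable), there is some $Z \in \Cal{D}^p \setminus \Cal{D}^q$; then $p \in \overline{Z}$ but $q \notin \overline{Z}$, so $\{q\}$ is not in the closure of... more directly, $\overline{Z}$ is a closed set separating $p$ from $q$ in one direction, and swapping roles gives a closed set the other way, so every singleton is closed. Concretely, $\overline{\{p\}}$ (topological closure) is contained in every basic closed set containing $p$; if $q \neq p$ then picking $Z \in \Cal{D}^p \setminus \Cal{D}^q$ shows $q \notin \overline{Z} \supseteq \overline{\{p\}}$, hence $q \notin \overline{\{p\}}$, so $\{p\}$ is closed and $\theta\kk$ is $T_1$.

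I expect the only genuine point requiring care is the step in compactness where one passes from "every finite subfamily of $\{\overline{Z_\alpha}\}$ is nonempty" to "$\{Z_\alpha\}$ generates a proper discrete $z$-filter": one must check that a finite intersection of members of $\Cal{D}(\kk)$ lies in $\Cal{D}(\kk)$ (a finite intersection of closed discrete subsets of $\kk$ is closed and discrete, and the special element $\kk$ causes no trouble), and that nonemptiness of $\overline{Z_{\alpha_1}\cap\dots\cap Z_{\alpha_n}}$ really does force $Z_{\alpha_1}\cap\dots\cap Z_{\alpha_n}\neq\emptyset$ — which is exactly axiom (1) in Definition \ref{dzf}. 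Everything else is a routine unwinding of definitions, and no analytic input (Weierstrass factorization, the gcd lemmas) is needed here; the argument is purely about the combinatorics of the discrete $z$-ultrafilters and the base $\{\overline{Z}\}$.
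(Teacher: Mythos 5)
Your proposal is correct and follows essentially the same route as the paper: compactness via the finite-intersection property on the base $\{\overline{Z}\}$, using Lemma \ref{intp} to reduce to the finite intersection property of the underlying family $\{Z_\alpha\}$ in $\Cal{D}(\kk)$ and then extending to a discrete $z$-ultrafilter by Zorn's lemma; and $T_1$ by choosing, for distinct $p,q$, a set $Z \in \Cal{D}^p \setminus \Cal{D}^q$ (which exists since distinct maximal filters are incomparable) so that $\overline{Z}$ separates them. Your write-up is somewhat more careful than the paper's at the step where nonemptiness of $\overline{Z_{\alpha_1}\cap\dots\cap Z_{\alpha_n}}$ forces $Z_{\alpha_1}\cap\dots\cap Z_{\alpha_n}\neq\emptyset$, but the argument is the same.
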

\begin{proof}
Consider two discrete $z$-ultrafilters $\Cal{D}^{p}$ and $\Cal{D}^{q}, p\neq q$. Since $\Cal{D}^{p}$ and $\Cal{D}^{q}$ are two distinct discrete $z$-ultrafilters there exists two distinct discrete closed sets $Z_{1}, Z_{2}$ such that $Z_{1}\in \Cal{D}^{p}$ and $Z_{1}\not\in \Cal{D}^{q}$ and $Z_{2}\in \Cal{D}^{q}$ and $Z_{2}\not\in \Cal{D}^{p}$ $\Rightarrow \Cal{D}^{p}\in \overline{Z}_{1}$, $\Cal{D}^{q}\not\in \overline{Z}_{1}$ and $\Cal{D}^{q}\in \overline{Z}_{2}$, $\Cal{D}^{p}\not\in \overline{Z}_{2}$.

To show compactness we need to show that a collection of basic closed sets with finite intersection property has non-empty intersection. Let $\{\overline{Z}_{\lambda}\}_{\lambda\in \Lambda}$ be a collection of basic closed sets in $\theta\kk$ with finite intersection property. Consider the collection $\mathcal{F}=\{Z_{\lambda} : \lambda\in \Lambda\}$. Then by Lemma \ref{intp}, $\mathcal{F}$ has finite intersection property. Therefore it can be extended to a discrete $z$-ultrafilter $\Cal{D}^{p}$. Then obviously $p\in \cap \overline{Z}_{\lambda}$. Therefore $\theta\mathbb{K}$ becomes compact.
\end{proof}
\begin{theorem}\label{hwtk} The structure space of $\kk\la z\ra$, that is, $\mathfrak{M}_{\kk}$, is homeomorphic with $\theta \kk$.
\end{theorem}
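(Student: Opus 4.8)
The strategy is to exhibit an explicit bijection between $\mathfrak{M}_\kk$ and $\theta\kk$ and then verify that it identifies the basic closed sets on each side, which by the definition of the two Stone-type topologies automatically upgrades the bijection to a homeomorphism. Concretely, I would define $\Theta : \mathfrak{M}_\kk \to \theta\kk$ by sending a maximal ideal $M$ to the point $p \in \theta\kk$ indexing the discrete $z$-ultrafilter $Z[M]$; this makes sense precisely because Lemma~\ref{bijct} tells us $Z[M]$ really is a discrete $z$-ultrafilter, and Definition~\ref{thetake} assigns exactly one point of $\theta\kk$ to it. The inverse is $p \mapsto Z^{-1}[\Cal{D}^p]$, which is a maximal ideal again by Lemma~\ref{bijct}. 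That these two maps are mutually inverse is exactly the content of the bijection already recorded in Lemma~\ref{bijct} (the proof there shows $Z^{-1}[Z[M]] = M$ and $Z[Z^{-1}[\mathcal{F}]] = \mathcal{F}$).

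Having $\Theta$ as a set-theoretic bijection, the next step is to match up the topologies. A basic closed set in $\mathfrak{M}_\kk$ is $M(f) = \{M : f \in M\}$ for $f \in \kk\la z\ra$, and a basic closed set in $\theta\kk$ is $\overline{Z} = \{p \in \theta\kk : Z \in \Cal{D}^p\}$ for $Z \in \Cal{D}(\kk)$. I would show $\Theta(M(f)) = \overline{Z_f}$: indeed $f \in M \iff Z_f \in Z[M] = \Cal{D}^{\Theta(M)} \iff \Theta(M) \in \overline{Z_f}$, where the first equivalence uses that every ideal of $\kk\la z\ra$ is a discrete $z$-ideal (Lemma~\ref{dhopas}), so $Z_f \in Z[M]$ forces $f \in M$. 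Conversely, given any $Z \in \Cal{D}(\kk)$, Lemma~\ref{impa} (Weierstrass factorization) produces $f \in \kk\la z\ra$ with $Z_f = Z$, so every basic closed set $\overline{Z}$ of $\theta\kk$ is of the form $\Theta(M(f))$. Thus $\Theta$ carries the base of closed sets on one side bijectively onto the base of closed sets on the other, so both $\Theta$ and $\Theta^{-1}$ are closed maps, hence $\Theta$ is a homeomorphism.

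I would also remark that $\theta\kk$ is compact $T_1$ by Theorem~\ref{ct1}, and $\mathfrak{M}_\kk$ is compact (a standard fact recalled in the introduction), so no separation or compactness subtlety is lurking; the argument is purely a bookkeeping identification of two descriptions of ``the same'' Stone space.

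The main obstacle—really the only nontrivial input—is the well-definedness and bijectivity packaged in Lemma~\ref{bijct} together with the discrete $z$-ideal property of Lemma~\ref{dhopas}: one must be sure that the passage $M \mapsto Z[M] \mapsto Z^{-1}[Z[M]]$ genuinely returns $M$ (not merely an ideal containing $M$), and this is exactly where Lemma~\ref{dhopas} is used, since $Z^{-1}[Z[M]] = M$ is the statement that $M$ is a discrete $z$-ideal. Everything else—translating $M(f)$ into $\overline{Z_f}$ and invoking Lemma~\ref{impa} for surjectivity onto basic closed sets—is routine. A small point worth double-checking is that distinct maximal ideals yield distinct discrete $z$-ultrafilters (so that $\Theta$ is injective), but this is immediate from $M = Z^{-1}[Z[M]]$: if $Z[M] = Z[M']$ then $M = M'$.
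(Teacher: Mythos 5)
Your proposal is correct and follows essentially the same route as the paper: the map $M\mapsto Z[M]$, bijectivity via Lemma~\ref{bijct}, and the identification of the basic closed sets $M(f)$ with $\overline{Z_f}$. Your additional bookkeeping (invoking Lemma~\ref{dhopas} for $Z^{-1}[Z[M]]=M$ and Lemma~\ref{impa} for surjectivity onto basic closed sets) merely makes explicit what the paper leaves implicit.
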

\begin{proof}
The map $\psi : \mathfrak{M}_\kk \longrightarrow \theta \kk$ given by $\psi(M)=Z[M]$ gives the homeomorphism. Bijectivity of $\psi$ is assured by the Lemma \ref{lmmbij}. Furthermore, $\psi$ exchanges the typical basic closed sets $M(f)$ and $\overline{Z_{f}}$ in the respective spaces.
\end{proof} 
Combining with Theorem $\ref{continuous image}$ we can conclude that
\begin{theorem} \label{bci}There is a bijective continuous map $\Psi : \beta\mathbb{K}\setminus Q(\mathbb{K}) \to \theta\mathbb{K}$.
\end{theorem}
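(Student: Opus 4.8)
The plan is to obtain Theorem \ref{bci} purely by composition, since all the substantive work has already been done. Recall that Theorem \ref{continuous image} produces a bijective continuous map $\Phi^{-1} : \beta\kk \setminus Q(\kk) \to \mathfrak{M}_\kk$ (the inverse of the closed injective surjection $\Phi$), and Theorem \ref{hwtk} produces a homeomorphism $\psi : \mathfrak{M}_\kk \to \theta\kk$, $M \mapsto Z[M]$. The map we want is simply $\Psi := \psi \circ \Phi^{-1}$, and I would define it explicitly: given $p \in \beta\kk \setminus Q(\kk)$, let $\mathcal{U}^p$ be the unique $z$-ultrafilter on $\kk$ converging to $p$; by Proposition \ref{propi} there is a unique discrete $z$-ultrafilter $\mathcal{F}$ with $\langle \mathcal{F} \rangle$ extending to $\mathcal{U}^p$; set $\Psi(p)$ to be the point of $\theta\kk$ indexed by this $\mathcal{F}$. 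This is exactly $Z[\Phi^{-1}(\mathcal{U}^p)]$, so the two descriptions agree.

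The verification then breaks into three routine checks. First, bijectivity: $\Phi^{-1}$ is a bijection by Theorem \ref{continuous image} and $\psi$ is a bijection by Lemma \ref{lmmbij} (equivalently Theorem \ref{hwtk}), so the composite is a bijection. Second, continuity: $\Phi^{-1}$ is continuous by Theorem \ref{continuous image} and $\psi$ is a homeomorphism, hence continuous, so $\Psi$ is continuous as a composite of continuous maps. Third, one should note for the record that $\Psi$ is \emph{not} claimed to be a homeomorphism — only a continuous bijection — since $\Phi$ itself is only known to be closed in one direction; the failure of $\theta\kk$ to be Hausdorff (Theorem \ref{ct1} gives only $T_1$) is precisely what blocks the usual "continuous bijection from compact to Hausdorff is a homeomorphism" argument, and this is worth a sentence of comment.

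The main obstacle, such as it is, is not in proving Theorem \ref{bci} but lies upstream: everything rests on Proposition \ref{propi}, which in turn relies on the existence theory for far points (Theorem \ref{12}) and therefore, as flagged in the introductory Note, on the Continuum Hypothesis entering through the bound on the number of dense open sets. So in the writeup I would be careful to state that the construction of $\Psi$ inherits whatever set-theoretic hypotheses are needed to make $Q(\kk)$ and Proposition \ref{propi} behave, and otherwise keep the proof of this particular theorem to the two-line composition argument. No genuinely new estimate or construction is required here; the theorem is a bookkeeping corollary of Theorems \ref{continuous image} and \ref{hwtk}.
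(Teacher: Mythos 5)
Your proposal is correct and is essentially identical to the paper's own argument: the paper obtains Theorem \ref{bci} precisely by composing the continuous bijection of Theorem \ref{continuous image} (the inverse of the closed bijection $\Phi$) with the homeomorphism $\psi$ of Theorem \ref{hwtk}. Your extra remarks on why only a continuous bijection (not a homeomorphism) is claimed, and on the dependence on the Continuum Hypothesis via Proposition \ref{propi}, are accurate but not needed for the proof itself.
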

\subsection{Embedding properties of $\kk$ in $\theta\kk$}\label{3.2}
Now we consider the natural embedding map $\eta_{\kk}:\kk\longrightarrow \theta\kk$ defined by $\eta_{\kk}(p)=\Cal{D}_{p}$, where $\Cal{D}_p$ is the discrete $z$-ultrafilter fixed at $p$, that is, all the members of $\Cal{D}_p$ contain the point $p$. This map turns out to be a continuous bijective map from $\kk$ onto $\eta_{\kk}(\kk)$-under the subspace topology of $\theta\kk$. But it can not be a homeomorphism as the basic closed subsets of $\eta_{\kk}(\kk)$ under the subspace topology of $\theta\kk$ are given by the $\eta_{\kk}$-image of the discrete closed subsets of $\kk$, which means that every closed subset of $\eta_{\kk}(\kk)$ is countable. It is quite obvious that $\eta_{\kk}(\kk)$ inherits the $T_{1}$ topology from $\theta\kk$. In our next result we prove that $\eta_{\kk}(\kk)$ is dense in $\theta\kk$, which allows us to interpret $\theta\kk$ as a kind of a compactification of $\kk$.
\begin{proposition}
$\eta_{\kk}(\kk)$ is dense in $\theta\kk$.
\end{proposition}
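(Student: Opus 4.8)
The plan is to show that every nonempty basic open set of $\theta\kk$ meets $\eta_\kk(\kk)$. Since the basic closed sets are the $\overline{Z}$ for $Z \in \Cal{D}(\kk)$, a basic open set has the form $\theta\kk \setminus \overline{Z}$ for some closed discrete $Z \subseteq \kk$ (or a finite union of such, but it suffices to handle the complements of basic closed sets, and then note that $\theta\kk \setminus (\overline{Z_1} \cup \dots \cup \overline{Z_n})$ contains $\theta\kk \setminus \overline{Z_1 \cap \dots}$ — actually more carefully, a basic open set is a finite intersection of complements of basic closed sets, so I should argue directly that any such finite intersection, if nonempty, hits $\eta_\kk(\kk)$). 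So the statement to prove reduces to: if $Z_1, \dots, Z_n \in \Cal{D}(\kk)$ and $\bigcap_{i}(\theta\kk \setminus \overline{Z_i}) \neq \emptyset$, then this set contains some $\eta_\kk(p) = \Cal{D}_p$.

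The key step is to observe that $\Cal{D}_p \in \theta\kk \setminus \overline{Z_i}$ precisely means $Z_i \notin \Cal{D}_p$, i.e. $p \notin Z_i$ (since $\Cal{D}_p$ is the fixed ultrafilter at $p$, whose members are exactly the closed discrete sets containing $p$ — together with $\kk$ itself). Hence $\Cal{D}_p$ lies in the given basic open set if and only if $p \notin Z_1 \cup \dots \cup Z_n$. Now $Z_1 \cup \dots \cup Z_n$ is a countable subset of $\kk$ (each $Z_i$ is closed discrete, hence countable), while $\kk$ has cardinality $c$; in fact $\kk \setminus (Z_1 \cup \dots \cup Z_n)$ is a nonempty (indeed dense) open subset of $\kk$. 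Pick any $p$ in this complement; then $\Cal{D}_p$ is in the basic open set, so it meets $\eta_\kk(\kk)$. The only thing that needs care is to confirm the basic open set is genuinely nonempty is not even needed — we produced a point of $\eta_\kk(\kk)$ in it unconditionally as long as $\kk \setminus \bigcup_i Z_i \neq \emptyset$, and this always holds because $\kk$ is uncountable and each $Z_i$ is countable. Thus \emph{every} basic open set of the form $\bigcap_i(\theta\kk \setminus \overline{Z_i})$ already meets $\eta_\kk(\kk)$, which gives density.

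I expect essentially no obstacle here; the main point to get right is the bookkeeping about what the fixed ultrafilter $\Cal{D}_p$ contains and the translation ``$\Cal{D}_p \notin \overline{Z}$ iff $p \notin Z$.'' One subtlety worth a sentence: I should make sure a general basic open set of $\theta\kk$ is a finite intersection of sets $\theta\kk \setminus \overline{Z}$ (this is the standard description of the topology generated by declaring the $\overline{Z}$ to be a base for the closed sets — open sets are arbitrary unions of finite intersections of complements of basic closed sets, so it suffices to treat finite intersections of the $\theta\kk\setminus\overline{Z}$), and that $\Cal{D}(\kk)$ is closed under finite unions inside the ``relevant'' sense — actually I will not need closure under unions; I will simply carry the finite family $Z_1,\dots,Z_n$ through and use that a finite union of countable sets is countable. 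With that, choosing $p \in \kk \setminus \bigcup_{i=1}^n Z_i$ finishes the proof.
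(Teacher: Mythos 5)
Your proof is correct and takes essentially the same route as the paper's: pick $p\in\kk$ outside the relevant closed discrete set(s) and observe that the fixed ultrafilter $\Cal{D}_p=\eta_\kk(p)$ lies in the given basic open set. The extra bookkeeping with finite intersections is harmless but not strictly needed, since by primality of discrete $z$-ultrafilters one has $\overline{Z_1}\cup\overline{Z_2}=\overline{Z_1\cup Z_2}$, so the complements of single basic closed sets $\theta\kk\setminus\overline{Z}$ already form a base for the open sets, and it suffices (as the paper does) to treat one $Z$ at a time.
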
 
\begin{proof}
Consider a basic open set $\theta\kk \setminus \overline{Z}$ of $\theta\kk$, where $Z$ is a discrete closed subset of $\kk$. Obviously $Z$ is not the whole of $\kk$, so consider a point $p\in \kk$ so that $p\notin Z$. Then $\eta_{\kk}(p)\notin \overline{Z}$ which implies that $\eta_{\kk}(p)\in \theta\kk\setminus \overline{Z}$.
\end{proof}
We need the following technical lemma:
\begin{lemma}\label{3.9}
(a) For each $Z\in \Cal{D}(\kk)$, $\overline{Z}\cap \eta_{\kk}(\kk)=\eta_{\kk}(Z)$.\newline
(b) 
For each $Z\in \Cal{D}(\kk)$, $\mbox{cl}_{\theta\kk}(\eta_{\kk}(Z)) =\overline{Z}$.\newline
(c) 
For each $Z\in \Cal{D}(\kk)$, $p\in \mbox{cl}_{\theta\kk}Z$ iff $Z\in \Cal{D}^p$.%\newline
%(d) $\mbox{Int}_{\theta\kk}[\mbox{cl}_{\theta\kk}(\eta_{\kk}(Z_f))]=\emptyset ~\forall f\in \kk\la z\ra$.
\end{lemma}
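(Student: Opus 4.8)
The plan is to prove the three statements of Lemma \ref{3.9} in the stated order, since (b) will follow quickly from (a) together with the definition of the basic closed sets $\overline{Z}$, and (c) is essentially a reformulation of (b) combined with (a). Throughout I would use only the definitions: $\eta_\kk(p)=\Cal{D}_p$ is the fixed discrete $z$-ultrafilter whose members are exactly the discrete closed sets containing $p$, and $\overline{Z}=\{p\in\theta\kk : Z\in\Cal{D}^p\}$ is the typical basic closed set, so that the closed sets of $\theta\kk$ are the intersections of such $\overline{Z}$'s.

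For part (a), I would argue both inclusions directly. If $q\in\overline{Z}\cap\eta_\kk(\kk)$, write $q=\eta_\kk(p)=\Cal{D}_p$; then $Z\in\Cal{D}_p$ means by definition that $p\in Z$, so $q=\eta_\kk(p)\in\eta_\kk(Z)$. Conversely, if $q=\eta_\kk(p)$ with $p\in Z$, then every member of $\Cal{D}_p$ contains $p$ and, since $\Cal{D}_p$ contains all discrete closed sets through $p$, in particular $Z\in\Cal{D}_p$, so $q\in\overline{Z}$; and trivially $q\in\eta_\kk(\kk)$. Hence $\overline{Z}\cap\eta_\kk(\kk)=\eta_\kk(Z)$.

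For part (b), one inclusion is immediate: $\eta_\kk(Z)\subseteq\overline{Z}$ by (a), and $\overline{Z}$ is closed, so $\mathrm{cl}_{\theta\kk}(\eta_\kk(Z))\subseteq\overline{Z}$. For the reverse inclusion I would take $p\in\overline{Z}$ and show every basic closed neighbourhood complement misses nothing, i.e.\ that every basic open set $\theta\kk\setminus\overline{W}$ containing $p$ meets $\eta_\kk(Z)$. So suppose $p\notin\overline{W}$, i.e.\ $W\notin\Cal{D}^p$; I must produce a point $x\in Z$ with $\eta_\kk(x)\notin\overline{W}$, i.e.\ with $x\notin W$ (using (a)). Since $W\notin\Cal{D}^p$ while $Z\in\Cal{D}^p$, we cannot have $Z\subseteq W$ (the upset property of the discrete $z$-filter $\Cal{D}^p$ would then force $W\in\Cal{D}^p$); hence there exists $x\in Z\setminus W$, which is exactly what we need. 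More generally, a basic open neighbourhood of $p$ in $\theta\kk$ is a finite intersection of sets $\theta\kk\setminus\overline{W_i}$, equivalently $\theta\kk\setminus\overline{W_1\cup\cdots\cup W_n}$ after replacing $W_i$ by their union if one insists on reducing to basic closed sets — alternatively I would just check that for each finite family with $W_i\notin\Cal{D}^p$, some $x\in Z$ avoids all $W_i$, which follows because $Z\in\Cal{D}^p$ and $\Cal{D}^p$ being prime (indeed an ultrafilter) forces $Z\not\subseteq W_1\cup\cdots\cup W_n$ unless some $W_i\in\Cal{D}^p$. This gives $p\in\mathrm{cl}_{\theta\kk}(\eta_\kk(Z))$.

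Part (c) is then just the combination: ``$p\in\mathrm{cl}_{\theta\kk}(\eta_\kk(Z))$'' equals ``$p\in\overline{Z}$'' by (b), and ``$p\in\overline{Z}$'' means ``$Z\in\Cal{D}^p$'' by the very definition of $\overline{Z}$ in Definition \ref{thetake}. (If the intended reading of ``$\mathrm{cl}_{\theta\kk}Z$'' in (c) is literally the closure of $\eta_\kk(Z)$, this is immediate; if $Z$ is being identified with $\eta_\kk(Z)$ via the embedding, it is the same statement.) The main obstacle I anticipate is the reverse inclusion in (b): one has to handle the basic open neighbourhoods of $p$ correctly, and the key point is to exploit that $\Cal{D}^p$ is not merely a discrete $z$-filter but a prime one (every discrete $z$-ultrafilter is a discrete prime $z$-filter, as noted after Definition \ref{dpzf}), so that $Z\in\Cal{D}^p$ and $Z\subseteq W_1\cup\cdots\cup W_n$ would force some $W_i\in\Cal{D}^p$. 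Once that observation is in place the rest is bookkeeping with the definitions.
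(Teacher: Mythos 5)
Your proof is correct and follows essentially the same route as the paper: part (a) from the definitions, part (c) as an immediate consequence, and the only substantive step being the reverse inclusion in (b). The paper phrases that step dually — showing every basic closed set $\overline{Z_1}\supseteq\eta_\kk(Z)$ satisfies $Z\subseteq Z_1$ (via (a) and injectivity of $\eta_\kk$) and hence $\overline{Z}\subseteq\overline{Z_1}$ — but this rests on the same upset property of $\Cal{D}^p$ that drives your neighbourhood argument, so the two are equivalent.
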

\begin{proof}
(a) follows routinely.\newline
(b) Since $\eta_{\kk}(Z)\subseteq \overline{Z}$, therefore $\mbox{cl}_{\theta\kk}(\eta_{\kk}(Z))\subseteq\overline{Z}$. Now for the other inclusion, consider any basic 
closed set $\overline{Z_{1}}$ of $\theta\kk$, such that $\eta_{\kk}(Z)\subseteq\overline{Z_{1}}$. Then we have $\eta_{\kk}(Z) \subseteq \overline{Z_{1}}\cap \eta_\kk(\kk) =\eta_{\kk}(Z_{1})$ (from (a) above) and hence $\overline{Z}\subseteq \overline{Z_{1}}$ which means that every basic
closed set of $\theta\kk$ containing $\eta_\kk(Z)$ also contains $\overline{Z}$ and this observation leads to a conclusion that $\overline{Z}\subseteq \mbox{cl}_{\theta\kk}(\eta_{\kk}(Z))$. This proves the theorem.\newline
(c) follows routinely from (b).\newline
%(d) If possible, let $\mbox{Int}_{\theta\kk}[\mbox{cl}_{\theta\kk}(\eta_{\kk}(Z_f))] = G \neq\emptyset$ for some $f\in \kk\la z\ra$. Then $G$ is an open set of $\theta \kk$ and hence it must intersect $\eta_{\kk}(\kk)$ (since $\eta_{\kk}(\kk)$ is dense in $\theta\kk$). Then $G\cap \eta_{\kk}(\kk)$ is non empty open subset of $\eta_{\kk}(\kk)$ and hence it should be uncountable, which gives a contradiction. %But $G\cap\eta_{\kk}(\kk)\subseteq \eta(Z(f))$-countable.
\end{proof}

With that in place, now we wish to characterize the maximal ideals of $\kk\la z\ra$ in terms of zero sets of functions in $\kk\la z\ra$ in the fashion of the Gelfand-Kolmogorov theorem. For $p \in \theta\kk$, denote by $M^p$ the maximal ideal corresponding to $\Cal{D}^p$. %, whose existence is guaranteed by Lemma \ref{bijct}. 
Also, by Lemma \ref{bijct}, and using the fact that $\theta\kk$ is in bijective correspondence with all maximal ideals of $\kk\la z\ra$, we can assert that every maximal ideal of $\kk\la z\ra$ arises this way. We have the following description for $M^p$:
\begin{theorem} \label{max}
$$M^{p}=\{f\in \kk\la z\ra : p\in \mbox{cl}_{\theta\kk}(\eta_\kk(Z_f))\},~ p\in \theta\kk.$$
\end{theorem}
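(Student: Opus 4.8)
The strategy is to unwind both sides through the dictionary already established: $M^p = Z^{-1}[\Cal{D}^p]$ (by definition of $M^p$), and $\Cal{D}^p$ is a discrete $z$-ultrafilter. So the statement $f \in M^p$ is equivalent to $Z_f \in \Cal{D}^p$. On the other side, $p \in \mbox{cl}_{\theta\kk}(\eta_\kk(Z_f))$ is, by Lemma \ref{3.9}(b), equivalent to $p \in \overline{Z_f}$, and by the definition of $\overline{Z_f}$ (Definition \ref{thetake}) this is exactly $Z_f \in \Cal{D}^p$. Chaining these three equivalences closes the proof.

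\textbf{Steps, in order.} First I would recall that by definition $M^p$ is the maximal ideal corresponding under Lemma \ref{bijct} to the discrete $z$-ultrafilter $\Cal{D}^p$, so $M^p = Z^{-1}[\Cal{D}^p] = \{f \in \kk\la z\ra : Z_f \in \Cal{D}^p\}$; hence $f \in M^p \iff Z_f \in \Cal{D}^p$. Second, I would invoke Lemma \ref{3.9}(b), which gives $\mbox{cl}_{\theta\kk}(\eta_\kk(Z_f)) = \overline{Z_f}$ for the closed discrete set $Z_f \in \Cal{D}(\kk)$; note $Z_f$ is indeed a closed discrete subset of $\kk$ since $f \in \kk\la z\ra$ has isolated zeros (or $Z_f = \kk$ when $f = 0$, the special member of $\Cal{D}(\kk)$, in which case $\overline{Z_f} = \theta\kk$ and the formula reads $0 \in M^p$ for all $p$, which is correct). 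Third, unravel $\overline{Z_f} = \{q \in \theta\kk : Z_f \in \Cal{D}^q\}$ from Definition \ref{thetake}, so $p \in \overline{Z_f} \iff Z_f \in \Cal{D}^p$. Combining: $f \in M^p \iff Z_f \in \Cal{D}^p \iff p \in \overline{Z_f} \iff p \in \mbox{cl}_{\theta\kk}(\eta_\kk(Z_f))$, which is the desired identity.

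\textbf{Main obstacle.} There is essentially no obstacle: every ingredient is in place, and the only thing requiring a word of care is the trivial edge case $f = 0$ (where $Z_f = \kk$ is the adjoined special element of $\Cal{D}(\kk)$) and the verification that $Z_f \in \Cal{D}(\kk)$ for nonzero $f$, which is immediate from the discreteness of the zero set of a nonzero element of $\kk\la z\ra$. One might alternatively cite Lemma \ref{3.9}(c) directly, which already states $p \in \mbox{cl}_{\theta\kk} Z_f$ iff $Z_f \in \Cal{D}^p$, collapsing steps two and three into a single reference; I would likely present the proof both ways-briefly-for clarity, but the one-line version via \ref{3.9}(c) combined with the definition of $M^p$ suffices.
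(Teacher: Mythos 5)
Your proposal is correct and follows the paper's own argument essentially verbatim: both identify $\mbox{cl}_{\theta\kk}(\eta_\kk(Z_f))$ with $\overline{Z_f}$ via Lemma \ref{3.9}(b), unwind Definition \ref{thetake} to get $p\in\overline{Z_f}\iff Z_f\in\Cal{D}^p$, and conclude by the correspondence $M^p=Z^{-1}[\Cal{D}^p]$ from Lemma \ref{bijct}. Your extra remarks on the $f=0$ edge case and the shortcut via Lemma \ref{3.9}(c) are fine but not needed.
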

\begin{proof}
%First we prove that each $M^{p}$ is an ideal of $\kk\la z\ra$ for all $p\in \theta\kk$. For that let $f,g\in M^{p}$, then from Lemma \ref{intp}, $p\in\mbox{cl}_{\theta\kk}(Z(f)\cap Z(g))\subseteq \mbox{cl}_{\theta\kk}Z(f-g)$ and therefore $f-g\in M^{p}$. Using the similar technique one can show that if $f\in M^{p}$ and $g\in \kk\la z\ra$ then $fg\in M^{p}$. Therefore $M^{p}$ is an ideal of $\kk\la z\ra$. For maximality, let $M^{p}$ be a proper ideal of $\kk\la z\ra$. Then there will be a maximal ideal $I$ of $\kk\la z\ra$ such that $M^{p}$ is a proper subset of $I$, means that there exists $f\in I \setminus M^{p}$. Again $Z[M^{p}]\subseteq Z[I]$ imply due to maximality of $M^{p}$ that $Z[I]=Z[M^{p}]$. Therefore $Z(f)\in Z[M^{p}]$ which means that $p\in \mbox{cl}_{\theta\kk}Z(f)$ and hence $f\in M^{p}$ that leads to a contradiction and proves the maximality of $M^{p}$. Again the duality between the maximal ideal and discrete $z$-ultrafilter easily shows that each maximal ideal corresponds to a point of $\theta\kk$. This proves the theorem. 
We have that 
\begin{align*}
\{f\in \kk\la z\ra : p\in \mbox{cl}_{\theta\kk}(\eta_\kk(Z_f))\} & = \{f \in \kk\la z\ra ~|~ p \in \overline{Z_f}\}\text{    (by Lemma \ref{3.9}, (b))}\\
& = \{f \in \kk\la z\ra : Z_f \in \Cal{D}^p\}  = Z^{-1}[\Cal{D}^p].
\end{align*}
From Lemma \ref{bijct}, $Z^{-1}[\Cal{D}^p]$ is a maximal ideal which is  obtained in correspondence with the discrete $z$-ultrafilter $\Cal{D}^p$. This proves the result. 
\end{proof}
%As a rather trivial application of Theorem \ref{max}, we prove that all maximal ideals in $\RR\la x\ra$ are formally real (this result was proved using different methods in ~\cite{GH}). That is,
%\begin{corollary}
%	If $M$ is a maximal ideal of $\RR\la x\ra$ and $f^2 + g^2 \in M$, then $f, g \in M$.
%\end{corollary}
%\begin{proof}
%	$f^2 + g^2 \in M^p$ means that $p \in \text{cl}_{\theta\kk}(\eta_{\kk}(Z_{f^2 + g^2}))$. But $\eta_\kk (Z_{f^2 + g^2}) = \eta_\kk (Z_f) \cap \eta_\kk (Z_g)$, giving that $p \in \text{cl}_{\theta\kk}(\eta_{\kk}(Z_f)) \cap \text{cl}_{\theta\kk}(\eta_{\kk}(Z_g))$. This implies $f, g \in M^p$.
%\end{proof}
%In view of Propositions \ref{peum} and \ref{peum1} above, it is reasonable to ask the following question: is there a prime ideal in $\kk\la z\ra$ which is not a maximal ideal? Henriksen showed in ~\cite{He1} that the Krull dimension of $\kk\la z\ra$ is at least $2^c$. Here, based on Theorem \ref{max}, we prove this result in a quite different manner.
As a non-trivial application of Theorem \ref{max}, we show here how to obtain a chain of prime ideals of $\kk\la z\ra$ from a chain of neighbourhoods of a point in $\theta\kk \setminus \eta_\kk(\kk)$. 
\begin{theorem}\label{krull}
	%The Krull dimension of $\kk\la z\ra$ is at least $2^c$.
Given two basic open neighbourhoods $U$ and $U^{'}$ of $x \in \theta\kk \setminus \eta_\kk(\kk)$ such that $U \subsetneq U^{'}$, we can find prime ideals $P$ and $P^{'}$ respectively of $\kk\la z\ra$ such that $P, P^{'} \subseteq M^x$, and $P \subsetneq P^{'}$.
\end{theorem}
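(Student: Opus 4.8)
The plan is to exploit the fact that a basic open neighbourhood of $x$ in $\theta\kk$ is, by definition, of the form $\theta\kk \setminus \overline{Z}$ for some discrete closed $Z \in \Cal{D}(\kk)$. So write $U = \theta\kk \setminus \overline{Z}$ and $U' = \theta\kk \setminus \overline{Z'}$ with $U \subsetneq U'$, which forces $\overline{Z'} \subsetneq \overline{Z}$. Since $x \notin \eta_\kk(\kk)$, we have $x \notin \overline{Z}$, hence $Z \notin \Cal{D}^x$, and likewise $Z' \notin \Cal{D}^x$; moreover $\overline{Z'} \subsetneq \overline{Z}$ gives us (via Lemma \ref{intp} and the fact that distinct $\overline{Z}$'s come from $z$-ultrafilter-incomparable data) room to produce, by Lemma \ref{impa}, functions $g, g' \in \kk\la z\ra$ with $Z_g = Z$, $Z_{g'} = Z'$. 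The first key step is to translate the strict inclusion $\overline{Z'} \subsetneq \overline{Z}$ into a statement that $g'$ ``strictly divides more'' than $g$ in a sense detectable at the level of prime ideals.

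The second step is the construction of $P$ and $P'$. The natural candidates are the prime ideals obtained from $g$ and $g'$ inside $M^x$: since $x \in \theta\kk$ corresponds to the discrete $z$-ultrafilter $\Cal{D}^x$, and $M^x = Z^{-1}[\Cal{D}^x]$ by Theorem \ref{max}, I would look at the prime ideals of $\kk\la z\ra$ minimal over the principal ideals generated by suitable functions vanishing on the relevant sets, chosen so that they land below $M^x$. Concretely, I expect to pick functions $h$ (resp. $h'$) whose zero sets are discrete closed sets lying in $\Cal{D}^x$ and refining the complements of $Z$ (resp. $Z'$) appropriately, then take $P$ a prime ideal with $(h) \subseteq P \subseteq M^x$ and $P'$ a prime ideal with $(h') \subseteq P' \subseteq M^x$, arranging $h' \mid h$ but $h \nmid h'$ so that $P \subsetneq P'$. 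Existence of such primes below $M^x$ is guaranteed because $\kk\la z\ra$ is a domain in which, by Proposition \ref{peum}, every prime extends uniquely to a maximal ideal — so one only needs to check the candidate primes are contained in $M^x$, which reduces to a zero-set containment verifiable via Lemma \ref{3.9}(c).

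The third step is verifying strictness $P \subsetneq P'$. Here I would use Lemma \ref{cntd} together with Lemma \ref{intp}: the strict inclusion $\overline{Z'} \subsetneq \overline{Z}$ of basic closed sets translates, under the correspondence $\psi$ of Theorem \ref{hwtk} and the duality between (discrete) $z$-filters and $z$-ideals, into a strict inclusion of the associated discrete $z$-filters, hence into a strict inclusion of ideals by the order-preserving nature of $Z^{-1}[\cdot]$ on discrete $z$-filters. The point is that a witness $p_0 \in \overline{Z} \setminus \overline{Z'}$ gives a discrete $z$-ultrafilter $\Cal{D}^{p_0}$ containing $Z$ but not $Z'$, and one can use it to separate the two prime ideals.

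\textbf{Main obstacle.} The delicate part will be step two: ensuring that the prime ideals $P$ and $P'$ can simultaneously be taken \emph{inside} $M^x$ while still being \emph{strictly} nested. A prime ideal containing a given function $g$ need not be contained in $M^x$ in general; the containment $P \subseteq M^x$ is equivalent, via Theorem \ref{max} and Lemma \ref{3.9}(c), to $Z[P] \subseteq \Cal{D}^x$, i.e.\ every zero set of a function in $P$ must belong to the ultrafilter $\Cal{D}^x$. So the real work is to build, from the neighbourhood data $U \subsetneq U'$, functions whose zero sets refine (in the partial order of discrete closed sets) the complements encoded by $x$ in a way compatible with $\Cal{D}^x$, and then invoke that $\kk\la z\ra$ being a gcd domain (Lemmas \ref{impa}, \ref{impb}) lets one factor and localize freely. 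I expect this to come down to choosing, inside the open set $U' \setminus \{x\}$, a point $p_0$ as above and writing down by Weierstrass factorization (Lemma \ref{impa}) explicit generators, then checking the chain conditions by hand; everything else is bookkeeping with the already-established dictionary between $\theta\kk$, $z$-ultrafilters, and maximal ideals.
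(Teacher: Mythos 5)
Your proposal correctly sets up the dictionary (basic neighbourhoods $U=\theta\kk\setminus\overline{Z}$, $M^x=Z^{-1}[\Cal{D}^x]$, Weierstrass realization of discrete closed sets as zero sets), but the core of the argument --- actually producing two \emph{strictly} nested primes inside the single maximal ideal $M^x$ --- is missing, and the mechanism you sketch for it does not work. You propose to take $P$ and $P'$ to be primes minimal over principal ideals $(h)$ and $(h')$ with $h'\mid h$ and $h\nmid h'$. First, a prime containing $(h)$ need not lie in $M^x$, and Proposition \ref{peum} only tells you each prime has a \emph{unique} maximal ideal above it; choosing a minimal prime over $(h)$ whose unique maximal ideal is $M^x$ is itself nontrivial and unaddressed. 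Second, and fatally, the divisibility relation between the generators gives no control over the inclusion of the resulting primes: since $(h)\subseteq(h')$, every prime over $(h')$ is a prime over $(h)$, and a minimal prime over $(h)$ inside $M^x$ can perfectly well \emph{coincide} with a minimal prime over $(h')$ inside $M^x$ (by Lemma \ref{dhopas} all ideals here are discrete $z$-ideals, so what matters is the zero-set filter, and the ``extra'' zeros of $h$ may be invisible to the filters refining $\Cal{D}^x$). Your ``witness point $p_0\in\overline{Z}\setminus\overline{Z'}$'' cannot separate $P$ from $P'$ either: it distinguishes maximal ideals, but $P$ and $P'$ are both supposed to sit under the same $M^x$.

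The paper's proof uses a different and essential device that your plan lacks: for an auxiliary point $q$ and a basic neighbourhood $V_q$ of $q$ avoiding $p$, it forms the multiplicatively closed set $G_{V_q}=\{f: \mathrm{cl}_{\theta\kk}(\eta_\kk(Z_f))\subseteq\theta\kk\setminus V_q\}$ and takes $P$ to be (the union of a maximal chain of) ideals inside $M^p$ disjoint from $G_{V_q}$; maximality with respect to disjointness from a multiplicative set is what forces primality, and shrinking $G_{V_q}$ to $G_{V_q'}$ for a larger neighbourhood $V_q'$ is what produces the strictly larger prime $P'$, with a Weierstrass function $g$ realizing $\theta\kk\setminus V_q=\mathrm{cl}_{\theta\kk}(\eta_\kk(Z_g))$ serving as the element of $P'\setminus P$ (it lies in $G_{V_q}$, hence outside $P$, but not in $G_{V_q'}$). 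Some version of this ``prime = maximal among ideals missing a multiplicative set'' construction is needed; without it your outline cannot be completed.
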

\begin{proof}
Choose $p, q \in \theta\kk \setminus \eta_\kk(\kk)$. Since $\theta\kk$ is $T_1$, there is a basic open neighbourhood $V_q$ of $q$ such that $p \notin V_q$. Define $G_{V_q} : = \{f \in \kk\la z\ra : \text{cl}_{\theta\kk}(\eta_\kk(Z_f)) \subseteq \theta\kk \setminus V_q \}$, and $N^{p, q} := M^p \bigcap M^q$. 

Now, let $\Cal{A}$ be the collection of all ideals contained in $M^p$, containing $N^{p, q}$ and disjoint from $G_{V_q}$. It can be checked that $G_{V_q} \bigcap M^p \neq \emptyset$. $\Cal{A}$ is a partially ordered set (under set inclusion), and therefore, we can find a maximal chain $\Cal{B}$ in $\Cal{A}$ by the Hausdorff maximality principle. Let $P := \bigcup \Cal{B}$. 

We will prove that $P$ is a prime ideal. If $g, h \in \kk\la z\ra \setminus P$, then the ideals $(P, g)$ (the smallest ideal containing $P$ and $\{g\}$) and $(P, h)$ must intersect $G_{V_q}$. This means that there exist $t, s \in G_{V_q}$ such that $t \equiv xg (\text{mod }P)$ and $s \equiv yh (\text{mod }P)$, where $x, y \in \kk\la z\ra$. Since $ts \neq 0 (\text{mod }P)$, $xygh \neq 0(\text{mod }P)$, which finally means that $gh \notin P$. This proves that $P$ is prime. It is clear that $P$ is strictly contained in the maximal ideal $M^p$, because $G_{V_q} \bigcap M^p \neq \emptyset$. Also, by Proposition \ref{peum}, $M^p$ is the unique maximal ideal $P$ is contained in.

Now, let $V_{q}^{'}$ be another basic open neighbourhood of $q$ such that $V_q \subsetneq V_q^{'}$ and $p \notin V^{'}_q$. Similarly defining $G_{V_q^{'}}$, we have that $G_{V_q^{'}} \subseteq G_{V_q}$. Clearly, we have $P \bigcap G_{V_q^{'}} = \emptyset$. Again, by considering the family $\Cal{A}^{'}$ as the collection of all ideals contained in $M^p$, containing $P$ and disjoint from $G_{V_q^{'}}$, we can construct another prime ideal $P^{'}$ such that $P \subseteq P^{'}$. 

We want to show that $P \subsetneqq P^{'}$. Since $V_q \subsetneqq V_q^{'}$, we have that $\theta\kk \setminus V^{'}_q \subsetneqq \theta\kk \setminus V_q$. This gives us two discrete closed sets $Z, Z^{'}$ in $\kk$, such that $ \theta\kk \setminus V_q = \text{cl}_{\theta\kk}(\eta_\kk(Z))$, $ \theta\kk \setminus V^{'}_q = \text{cl}_{\theta\kk}(\eta_\kk(Z^{'}))$, and $Z^{'} \subsetneqq Z$. Choose $g \in \kk \la z\ra$ such that $Z = Z_g$. That implies that $g \in P^{'} \setminus P$. 
%We want to show that $P \subsetneq P^{'}$. Choose $r \in V^{'}_q \setminus V_q$. Considering $V_q$ as a collection of ultrafilters, let $\Cal{U}$ denote the collection of all the closed discrete sets of $\kk$ contained in the ultrafilters in $V_q$. Let $Z$ be a closed discrete set in $\kk$ such that $Z$ is inside the ultrafilter $r$, but $Z \notin \Cal{U}$. Clearly, $r \in \text{cl}_{\theta\kk}(\eta_\kk(Z))$. Now, let $Z = Z_f$. It is clear that $f \notin G_{V_q^{'}}$ and $f \notin P$. Let $I$ be the ideal generated by $P \cup \{f\}$, and let $P^{''}$ denote the largest ideal containing $I$ and disjoint from $V^{'}_q$. By construction, $P^{''} = P^{'}$, which shows that $P \subsetneq P^{'}$.%Since $V_q \subsetneq V_q^{'}$, we have that $\theta\kk \setminus V^{'}_q \subsetneq \theta\kk \setminus V_q$. This gives us two discrete closed sets $Z, Z^{'}$ in $\kk$, such that $ \theta\kk \setminus V_q = \text{cl}_{\theta\kk}(\eta_\kk(Z))$, $ \theta\kk \setminus V^{'}_q = \text{cl}_{\theta\kk}(\eta_\kk(Z^{'}))$, and $Z^{'} \subsetneq Z$. Choose $g \in \kk \la z\ra$ such that $Z = Z_g$. That implies that $g \in P^{'} \setminus P$.
\end{proof}
As an immediate corollary, we have the following:
\begin{corollary}\label{krull1}
	The Krull dimension of $\kk\la z\ra$ is at least $c$.
\end{corollary}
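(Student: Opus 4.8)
The plan is to exhibit a chain of prime ideals in $\kk\la z\ra$ of length at least $c$ by iterating the construction in Theorem \ref{krull}. The key input is that $\theta\kk$ is a compact $T_1$ space in which $\eta_\kk(\kk)$ is dense but properly contained, so $\theta\kk \setminus \eta_\kk(\kk)$ is non-empty; pick a point $x$ in this set. What Theorem \ref{krull} really provides, once we unwind it, is the following: to any strictly decreasing chain of basic open neighbourhoods of $x$ (equivalently, a strictly increasing chain of basic closed sets $\overline{Z_\alpha} = \mathrm{cl}_{\theta\kk}(\eta_\kk(Z_\alpha))$ all avoiding $x$) one associates a chain of prime ideals $P_\alpha \subseteq M^x$, and the strict inclusion $\overline{Z_\alpha} \subsetneq \overline{Z_\beta}$ forces $P_\alpha \subsetneq P_\beta$ because a function cutting out $Z_\beta$ lands in $P_\beta \setminus P_\alpha$. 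So the problem reduces to producing a chain of discrete closed subsets of $\kk$, of order type (or at least cardinality) $c$, whose closures in $\theta\kk$ all miss $x$ and are strictly nested.

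First I would fix $x \in \theta\kk \setminus \eta_\kk(\kk)$ and let $\Cal{D}^x$ be the corresponding discrete $z$-ultrafilter; since $x \notin \eta_\kk(\kk)$, this ultrafilter is free, so $\bigcap \Cal{D}^x = \emptyset$. Then for every $Z \in \Cal{D}(\kk)$ with $Z \notin \Cal{D}^x$ we have $x \notin \overline{Z}$ by Definition \ref{thetake}. The concrete model I would use is $\kk = \RR$ (and transport to $\CC$ via the homeomorphism $\theta\RR \cong \theta\CC$, or simply run the same argument in $\CC$ directly): take the discrete closed set $\NN = \{1, 2, 3, \dots\}$, and inside it build a strictly increasing chain of infinite co-infinite subsets $S_\alpha \subseteq \NN$, $\alpha < c$, such that none of the $S_\alpha$ lies in $\Cal{D}^x$. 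One gets $c$-many strictly nested subsets of $\NN$ from the standard order-embedding of $(\RR, <)$ into $(\mathcal{P}(\NN), \subseteq)$ via Dedekind cuts of the rationals; and one can arrange (by shrinking, or by noting $\Cal{D}^x$ restricted to subsets of the discrete set $\NN$ is just an ordinary ultrafilter on $\NN$, which contains only one member of any partition of $\NN$ into two infinite pieces) that cofinally many of these subsets avoid $\Cal{D}^x$ — in fact at most one "side" of each cut can be in $\Cal{D}^x$, so we may pass to a subchain of full cardinality $c$ consisting of sets not in $\Cal{D}^x$. Setting $Z_\alpha := S_\alpha$ (a discrete closed subset of $\RR$), we get $x \notin \overline{Z_\alpha}$ and $\overline{Z_\alpha} \subsetneq \overline{Z_\beta}$ for $\alpha < \beta$ by Lemma \ref{intp} together with strictness of $S_\alpha \subsetneq S_\beta$ (the extra points of $S_\beta$ give a fixed discrete ultrafilter in $\overline{Z_\beta} \setminus \overline{Z_\alpha}$).

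Next I would feed this chain into the machinery of Theorem \ref{krull}: the complements $\theta\kk \setminus \overline{Z_\alpha}$ form a strictly decreasing chain of basic open neighbourhoods of $x$, so the theorem (applied to consecutive pairs, then amalgamated) yields prime ideals $P_\alpha \subseteq M^x$ with $P_\alpha \subsetneq P_\beta$ whenever $\alpha < \beta$. For limit stages I would simply take unions: a union of a chain of prime ideals is prime, it is still contained in $M^x$, and it is still disjoint from the relevant $G_{V}$ sets, so the construction extends through all $\alpha < c$. Throwing in $M^x$ itself at the top, we obtain a chain of prime ideals of $\kk\la z\ra$ of cardinality at least $c$, whence $\dim \kk\la z\ra \geq c$.

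The main obstacle is the bookkeeping in the previous paragraph: making sure that at every stage — and in particular at limit ordinals — the ideal we have built stays inside $M^x$ and stays disjoint from the set $G_{V_\alpha}$ that controls the \emph{next} strict inclusion, so that Theorem \ref{krull}'s hypothesis "$P \cap G_{V'} = \emptyset$" is genuinely available at step $\alpha+1$. This is exactly the point where one must check that $G_{V_\beta} \subseteq G_{V_\alpha}$ for $V_\beta \subseteq V_\alpha$ (immediate from the definition of $G_{V}$) and that disjointness from $G_{V_\alpha}$ is preserved under the unions taken at limits (clear, since a union meets $G_{V_\alpha}$ iff some member does). Once this is laid out carefully, the transfinite recursion goes through and the corollary follows.
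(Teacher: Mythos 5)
Your proposal is correct and follows essentially the same route as the paper: both reduce the problem to exhibiting a chain of $c$ many strictly nested discrete closed subsets of a fixed countable discrete set (via Dedekind cuts of $\mathbb{Q}$), translate this into a chain of basic open neighbourhoods, and feed it into Theorem \ref{krull}. Your additional bookkeeping — checking that the chosen sets avoid $\Cal{D}^x$ and that the transfinite recursion (unions at limit stages, preservation of disjointness from $G_V$) goes through — is a welcome elaboration of details the paper leaves implicit, but it is not a different argument.
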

\begin{proof}
	Given points $p, q \in \theta\kk \setminus \eta_\kk(\kk)$, we wish to produce a chain of $c$ many basic open neighbourhoods containing $p$ and not containing $q$. %Using the bijective continuous map $\Psi: \beta\kk \setminus Q(\kk) \to \theta\kk$, and given $x, y \in \beta\kk \setminus Q(\kk)$, this is equivalent to producing a chain of compact sets in $\beta\kk \setminus Q(\kk)$ containing $c$ many elements, each containing $x$ and disjoint from $\{y\}$. Since $p, q$, and hence $x, y$ are arbitrary, 
	Since $p, q$ are arbitrary in $\theta\kk \setminus \eta_\kk(\kk)$, it suffices to demonstrate a chain containing $c$ many discrete closed subsets of $\mathbb{R}^2$. 
	
	Take the set $\ZZ \times \{0\}$, and bijectively map it to $\mathbb{Q}$. For an irrational number $r$, $\mathbb{Q}_r := \{ x \in \mathbb{Q}: x < r\}$ is a chain of subsets in $\mathbb{Q}$. Taking the inverse images of $\mathbb{Q}_r$ in $\ZZ \times \{0\}$ and varying $r$ will give a chain of discrete closed subsets of $\ZZ\times\{0\} \subset \RR^2$ containing $c$ many elements.%To produce this, consider a closed discrete set $Z \subseteq \kk$ such that $x \in \text{cl}_{\beta\kk}Z$ and $y \notin \text{cl}_{\beta\kk}Z$. Now, consider a Cantor set $C \subseteq I$ (according as $\kk = \RR$ or $\CC$, $I = [0, 1]$ or $I = [0, 1] \times \{0\}$). For $r \in [0, 1]$, define $I_r = [0, r]$ and let $C_r : = C \bigcap I_r$. Then, $Z_r : = Z \bigcup C_r$ gives an uncountable family of closed sets of cardinality $c$, such that $x \in \text{cl}_{\beta\kk}Z_r$ and $y \notin \text{cl}_{\beta\kk}Z_r$. 
	\end{proof}
%Observe that the above procedure tells us the following: if $V_q \subsetneqq V_q^{'}$ are two basic open neighbourhoods of $q$ not containing $p$, then we can find prime ideals $P, P^{'}$ respectively such that $P \subsetneqq P^{'}$. In other words, this tells us how to build a chain of prime ideals (ordered by strict inclusion) containing $2^c$ many elements. It suffices to establish a chain of basic open neighbourhoods of $q$ not containing $p$, of cardinality $2^c$, ordered by strict inclusion. The rest is a straightforward argument using transfinite induction (for example, as in ~\cite{N}, Theorem I.2), and we omit the details. We just make the comment that one can establish the following more general statement: let $X$ be a cofinite topological space of cardinality $2^c$ and choose $x$, $y \in X$. Consider a basic open neighbourhood $V_0$ of $x$ not containing $y$. Then we can find a chain of basic open neighbourhoods $V_\alpha$ of $x$, ordered by strict inclusion, satisfying $V_\alpha \subsetneqq V_0$ for all $\alpha$ (that is, a ``decreasing chain'' with ``starting element'' $V_0$). This finishes the proof.
Observe that Corollary \ref{krull1} would extend to give another proof of the fact that Krull dimension of $\kk\langle z\rangle$ is $2^{c}$ if one could establish the existence of a neighbourhood chain around a point $x \in \theta\kk \setminus \eta_\kk(\kk)$ containing $2^c$ elements. It might be an interesting question to investigate whether such a chain exists.

Now, we go for another important claim: 
\begin{theorem}\label{ska}
$\theta\kk$ is a Wallman compactification of $\eta_{\kk}(\kk)$.
\end{theorem}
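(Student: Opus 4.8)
The plan is to verify that $\theta\kk$, together with the dense embedding $\eta_\kk : \kk \to \theta\kk$, satisfies the defining universal property (equivalently, the characterizing axioms) of a Wallman-type compactification: namely, that the family of basic closed sets $\{\overline{Z} : Z \in \Cal{D}(\kk)\}$ forms a normal base (a disjunctive, ring-like lattice of closed sets) on $\eta_\kk(\kk)$ whose induced Wallman space is precisely $\theta\kk$. Recall that the Wallman compactification $w(\mathcal{E})$ associated to a normal base $\mathcal{E}$ of closed sets on a $T_1$ space $Y$ has as its points the $\mathcal{E}$-ultrafilters, topologized by taking closures of members of $\mathcal{E}$ as basic closed sets; it is always compact $T_1$, and $Y$ embeds densely in it.

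First I would fix the lattice: on $Y := \eta_\kk(\kk)$, take $\mathcal{E} := \{\eta_\kk(Z) : Z \in \Cal{D}(\kk)\}$, which by Lemma \ref{3.9}(a) is exactly the trace $\{\overline{Z} \cap \eta_\kk(\kk)\}$, i.e.\ the family of closed subsets of $Y$ in its subspace topology. I would check that $\mathcal{E}$ is closed under finite unions and finite intersections (immediate, since $\Cal{D}(\kk)$ is, using Lemma \ref{impa} to realize unions and intersections as zero sets) and that it is disjunctive and separating — here is where $T_1$-ness of $\eta_\kk(\kk)$ and the fact that singletons $\{p\}$ lie in $\Cal{D}(\kk)$ get used: given a point $\eta_\kk(p)$ not in $\eta_\kk(Z)$, the set $\eta_\kk(\{p\})$ is a member of $\mathcal{E}$ containing the point and disjoint from $\eta_\kk(Z)$. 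Then I would verify the normality (screening) condition: disjoint members $\eta_\kk(Z_1), \eta_\kk(Z_2)$ of $\mathcal{E}$ — equivalently $Z_1 \cap Z_2 = \emptyset$ in $\kk$ — can be separated by complements of members of $\mathcal{E}$; since any partition of a discrete set into two discrete pieces works, this is essentially automatic and I would spell it out in one or two lines.

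Next I would identify the $\mathcal{E}$-ultrafilters on $Y$ with the points of $\theta\kk$. The correspondence is: an $\mathcal{E}$-ultrafilter $\mathcal{U}$ on $Y$ pulls back along $\eta_\kk$ to a maximal family of discrete closed sets in $\kk$ with the finite intersection property, i.e.\ a discrete $z$-ultrafilter, hence a point of $\theta\kk$ by Definition \ref{thetake}; conversely $\Cal{D}^p \mapsto \eta_\kk(\Cal{D}^p)$. Bijectivity follows from the one-to-one correspondence between $\mathcal{E}$ and $\Cal{D}(\kk)$ together with the maximality clauses on both sides. Finally I would match the topologies: the basic closed sets of the Wallman space $w(\mathcal{E})$ are $\{\mathcal{U} : \eta_\kk(Z) \in \mathcal{U}\}$, which under the above identification is exactly $\{p \in \theta\kk : Z \in \Cal{D}^p\} = \overline{Z}$, the basic closed set of $\theta\kk$ from Definition \ref{thetake}; hence the identification is a homeomorphism. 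Density of $\eta_\kk(\kk)$ in $\theta\kk$ was already proved, and compact $T_1$-ness is Theorem \ref{ct1}, so the Wallman axioms are all met.

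The main obstacle I anticipate is purely one of bookkeeping and of pinning down the precise definition of ``Wallman compactification'' being invoked — whether the bare Wallman space of the lattice of \emph{all} closed sets of the (non-Hausdorff, countably-closed) space $\eta_\kk(\kk)$, or a Wallman-type compactification with respect to the distinguished normal base $\mathcal{E}$. Since $\eta_\kk(\kk)$ is $T_1$ but every closed set in it is countable, one must be slightly careful that $\mathcal{E}$ really is the full closed-set lattice of the subspace (which follows from Lemma \ref{3.9}(a)) rather than a proper subfamily; once that is settled, the verification of the normal-base axioms and the ultrafilter-to-point dictionary is routine. No deep new idea is needed beyond the structural lemmas already established — Lemmas \ref{impa}, \ref{intp}, \ref{3.9}, Theorem \ref{ct1}, and the density proposition — so the proof should be short.
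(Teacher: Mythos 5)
Your overall route is the same as the paper's: the crux is that, by Lemma \ref{3.9}(a), the family $\mathcal{E}=\{\eta_\kk(Z): Z\in \Cal{D}(\kk)\}$ is not merely a base but the \emph{entire} lattice of closed subsets of $\eta_\kk(\kk)$ (finite unions and arbitrary intersections of closed discrete sets are closed discrete), so the closed-set ultrafilters of $\eta_\kk(\kk)$ are exactly the discrete $z$-ultrafilters on $\kk$, i.e.\ the points of $\theta\kk$, and the basic closed sets $\overline{Z}$ match the Wallman topology. That part of your argument, together with the already-proved density and compact $T_1$-ness, is correct and is precisely the paper's proof.

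However, one step of your proposal is false and must be deleted: the claimed verification of the normality (screening) axiom for $\mathcal{E}$. That axiom would require, for disjoint $Z_1,Z_2\in\Cal{D}(\kk)$, sets $C,D\in\Cal{D}(\kk)$ with $Z_1\cap C=\emptyset$, $Z_2\cap D=\emptyset$ and $\eta_\kk(C)\cup\eta_\kk(D)=\eta_\kk(\kk)$; this is impossible, since $C\cup D$ is countable while $\kk$ is not. Indeed, in the subspace topology on $\eta_\kk(\kk)$ every nonempty open set is the complement of a countable set, so any two nonempty open sets meet, and the space is as far from normal as possible. This is not a cosmetic issue: a genuine normal base would force the resulting Wallman-type compactification to be Hausdorff, whereas $\theta\kk\cong\mathfrak{M}_\kk$ is only $T_1$. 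The fix is simply to invoke the classical Wallman compactification of a $T_1$ space (ultrafilters of the full closed-set lattice), which needs no normality hypothesis and is exactly the notion the theorem intends; your own closing paragraph already identifies the right reduction, so you should commit to it and drop the normal-base verification entirely. The disjunctivity check via singletons is fine and is what guarantees the dense embedding.
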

\begin{proof}
It is sufficient to show that the collection of all closed subsets of $\eta_{\kk}(\kk)$ is none other than the $\eta_{\kk}$-image of closed discrete subsets of $\kk$, because from the construction of $\theta\kk$ it follows immediately. Now basic closed sets of $\eta_{\kk}(\kk)$ are of the form $\overline{Z}\cap \eta_{\kk}(\kk)=\eta_{\kk}(Z)$, where $\overline{Z}$ is a basic closed set of $\theta\kk$. %or in other words $Z\in \Cal{D}(\kk)$. 
Then $A=\{\eta_{\kk}(Z) : Z\in \Cal{D}(\kk)\}$ forms a base for the closed sets of $\eta_{\kk}(\kk)$. Since under arbitrary intersection every discrete set remains discrete, therefore, $A$ itself is the collection of all closed subsets of $\eta_{\kk}(\kk)$.
\end{proof}
%We know that there is a bijective continuous map $\Phi:\beta\kk\setminus\mathbb{Q}(\kk)\longrightarrow \theta\kk$.

 %It is still an interesting question to investigate whether $\theta \RR$ and $\theta \CC$ are homeomorphic. 
We end this section by proving that $\eta_\RR(\RR)$ and $\eta_\CC(\CC)$ are homeomorphic. This automatically implies that such a homeomorphism would readily extend to a homeomorphism between their Wallman compactifications $\theta\RR$ and $\theta\CC$. However, we also include an explicit demonstration of this fact. 
\begin{proposition}\label{churi}
(a) $\eta_\RR(\RR)$ and $\eta_\CC(\CC)$ are homeomorphic.\newline
(b) $\theta\RR$ and $\theta\CC$ are homeomorphic.
\end{proposition}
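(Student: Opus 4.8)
The plan is to produce one bijection $\phi \colon \RR \to \CC$ that is simultaneously a homeomorphism of $\eta_\RR(\RR)$ onto $\eta_\CC(\CC)$ (this gives (a)) and pushes discrete $z$-ultrafilters forward to discrete $z$-ultrafilters (this gives (b), by an explicit formula). First I would record the topological description of $\eta_\kk(\kk)$ already used in the proof of Theorem \ref{ska}: a subset of $\eta_\kk(\kk)$ is closed in the subspace topology from $\theta\kk$ if and only if it is either all of $\eta_\kk(\kk)$ or $\eta_\kk(D)$ for a closed discrete $D \subseteq \kk$. Transporting everything along the bijections $\eta_\RR$ and $\eta_\CC$, it therefore suffices to find a bijection $\phi \colon \RR \to \CC$ such that for every $D \subseteq \RR$, $D$ is closed and discrete in $\RR$ if and only if $\phi(D)$ is closed and discrete in $\CC$; note that $\phi(\RR) = \CC$ automatically matches up the two ``top'' closed sets.

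The device is to detect closed discreteness by a countable partition into relatively compact pieces. Write $\RR = \bigsqcup_n P_n$ with the $P_n$ the half-open unit intervals $[m, m+1)$, $m \in \ZZ$, reindexed by $\NN$, and $\CC = \bigsqcup_n A_n$ with $A_n = \{ z \in \CC : n \le |z| < n+1 \}$. In each partition every piece is bounded, has cardinality $c$, and every compact set meets only finitely many pieces. It follows that $D \subseteq \RR$ is closed and discrete if and only if $D \cap P_n$ is finite for every $n$: if $D$ is closed and discrete then $D \cap \overline{P_n}$ is a discrete closed subset of a compact set, hence finite; conversely an accumulation point of $D$ has a bounded neighbourhood that meets only finitely many pieces yet contains infinitely many points of $D$, forcing one $D \cap P_n$ to be infinite. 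The same holds for $\CC$ and the $A_n$. Now fix for each $n$ a bijection $P_n \to A_n$ (possible since both have cardinality $c$) and let $\phi$ be their union. Since $\phi$ and $\phi^{-1}$ carry pieces to pieces and preserve ``has finite intersection with each piece,'' both directions of the characterization transfer, so $\phi$ maps the closed sets of $\eta_\RR(\RR)$ bijectively onto those of $\eta_\CC(\CC)$; this proves (a).

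For (b) I would first give the soft argument: by Theorem \ref{ska}, $\theta\RR$ and $\theta\CC$ are the Wallman compactifications of the homeomorphic spaces $\eta_\RR(\RR)$ and $\eta_\CC(\CC)$, and the Wallman compactification is determined up to homeomorphism by the lattice of closed sets of the base space, so (b) follows from (a). For the explicit map, push discrete $z$-filters forward along $\phi$: to a discrete $z$-filter $\mathcal{F}$ on $\RR$ assign $\phi_*\mathcal{F} := \{\phi(D) : D \in \mathcal{F}\}$, which is a subfamily of $\Cal{D}(\CC)$ by part (a). Because $\phi$ is a bijection preserving membership in $\Cal{D}(\cdot)$, inclusion, and finite intersection, $\phi_*$ takes discrete $z$-filters to discrete $z$-filters and, by maximality, discrete $z$-ultrafilters to discrete $z$-ultrafilters; hence it restricts to a bijection $\Theta \colon \theta\RR \to \theta\CC$. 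Finally, for a closed discrete $Z \subseteq \CC$ one computes $\Theta^{-1}(\overline{Z}) = \{\mathcal{F} : \phi^{-1}(Z) \in \mathcal{F}\} = \overline{\phi^{-1}(Z)}$, a basic closed set of $\theta\RR$, and symmetrically $\Theta(\overline{Z'}) = \overline{\phi(Z')}$ for closed discrete $Z' \subseteq \RR$; so $\Theta$ is a closed continuous bijection, i.e., a homeomorphism.

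I expect the only delicate point to be the choice of partitions: the pieces must be relatively compact and form a locally finite family, so that ``meets each piece in a finite set'' is genuinely equivalent to ``closed and discrete'' — this is where the geometry of $\RR$ and $\CC$, and not merely their common cardinality, enters. Everything else is routine bookkeeping with bijections that commute with unions, intersections, and inclusions.
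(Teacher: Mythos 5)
Your proposal is correct and follows essentially the same route as the paper: both construct the homeomorphism by gluing bijections between the pieces of two countable, locally finite partitions of $\RR$ and $\CC$ into bounded sets of cardinality $c$ (the paper uses unit squares indexed via a bijection $\ZZ\to\ZZ\times\ZZ$ where you use annuli), and both then push discrete $z$-ultrafilters forward and match basic closed sets. If anything, your ``$D$ is closed and discrete iff $D$ meets each piece in a finite set'' criterion is a cleaner justification than the paper's appeal to a map ``taking compact sets to compact sets,'' which the exhibited piecewise bijection does not literally do.
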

\begin{proof}
(a) Observe that the basic closed sets of $\eta_\kk(\kk)$ are given by the closed discrete sets of $\kk$ in the usual topology. So, it suffices to design a bijection from $\RR$ to $\CC$ which takes discrete closed sets to discrete closed sets (in the usual topology). Using sequential compactness of $\kk$, it suffices to define a bijective map from $\RR$ to $\CC$ that takes compact sets to compact sets. For example, start by defining a bijective map $f = (f_1, f_2) : \ZZ \to \ZZ \times \ZZ$. Now, extend $f$ to $F : \RR \to \CC$ which maps $[n, n + 1)$ to  $[f_1(n), f_1(n) + 1) \times [f_2(n), f_2(n) + 1)$ bijectively. This extended map $F$ gives a homeomorphism.

%\end{proof}
%Also, we have the following:
%\begin{proposition}\label{kora}
%$\theta \RR$ is homeomorphic to $\theta\CC$.
%\end{proposition}
%begin{proof}
(b) Pick a discrete $z$-ultrafilter $\Cal{F}$ from $\theta\RR$. Using the map $F$ from (a), it is clear that $F(\Cal{F}) = \{F(S) : S \in \Cal{F}\}$ is a discrete $z$-ultrafilter on $\CC$ and hence lies in $\theta\CC$. Also, this mapping between discrete $z$-ultrafilters is bijective, which follows from the bijectivity of $F$. Lastly, a basic closed set of $\theta\RR$ is given by $\overline{Z} = \{\mathfrak{F} \in \theta\RR : Z \in \mathfrak{F}\}$. Then, we have that $F(\overline{Z}) = \{\mathfrak{G} \in \theta\CC : F(Z) \in \mathfrak{G}\}$, which is a basic closed set in $\theta\CC$. 
\end{proof}

\subsection{Acknowledgements}
The authors would like to thank Sudip Kumar Acharyya for reading through a draft copy of this paper and providing important suggestions. The second author is thankful to MPIM Bonn for its financial support and for providing ideal working conditions. 

\section{Appendix: Assorted properties of $\beta\kk$, $\theta\kk$ and $\Psi$} \label{app}
We use this appendix to record some assorted facts regarding $\theta\kk$, $\eta_\kk{\kk}$ and the map $\Psi$. These results seem to us to be of independent interest and some might also spur further investigation. We start by outlining one further connection between Stone-\v{C}ech compactification and $\theta\KK$, namely
\begin{proposition}
	For $Z \in \Cal{D}(\kk)$, $\text{cl}_{\theta\kk}(\eta_{\kk}(Z))$ is homeomorphic to $\beta Z$.
\end{proposition}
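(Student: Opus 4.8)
The plan is to realize both spaces as the set of ultrafilters on the underlying set of $Z$ and then match their bases of closed sets. By Lemma~\ref{3.9}(b), $\mathrm{cl}_{\theta\kk}(\eta_\kk(Z))=\overline{Z}=\{p\in\theta\kk:Z\in\Cal{D}^p\}$, a basic closed (hence compact, by Theorem~\ref{ct1}) subset of $\theta\kk$ with the subspace topology. Since a closed discrete subset of $\kk$ carries the discrete topology, $\beta Z$ is the Stone space of the Boolean algebra $\mathcal{P}(Z)$: its points are the ultrafilters on the set $Z$, and the sets $\widehat{A}:=\{\mathcal{U}\in\beta Z:A\in\mathcal{U}\}=\mathrm{cl}_{\beta Z}(A)$ $(A\subseteq Z)$ form a clopen base, in particular a base for the closed sets. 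I will also use repeatedly that every $A\subseteq Z$ again belongs to $\Cal{D}(\kk)$: as $Z$ is closed and discrete it has no limit point in $\kk$, so neither does $A$, and $A$ is discrete as a subspace of $Z$. (We take $Z$ to be a genuine closed discrete subset; the formal element $\kk\in\Cal{D}(\kk)$ is not discrete and is excluded here.)

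I would then define $\Phi:\overline{Z}\to\beta Z$ by sending a discrete $z$-ultrafilter $\Cal{F}$ with $Z\in\Cal{F}$ to its trace $\Phi(\Cal{F}):=\{A\subseteq Z:A\in\Cal{F}\}=\{D\cap Z:D\in\Cal{F}\}$. That $\Phi(\Cal{F})$ is an ultrafilter on $Z$ is immediate: it is a proper filter by the discrete $z$-filter axioms (Definition~\ref{dzf}), it is upward closed in $Z$ because any $B$ with $A\subseteq B\subseteq Z$ lies in $\Cal{D}(\kk)$ and so the upset axiom applies, and it is an ultrafilter because $A\cup(Z\setminus A)=Z\in\Cal{F}$ forces one of $A,Z\setminus A$ into $\Cal{F}$, since a discrete $z$-ultrafilter is a discrete prime $z$-filter. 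Conversely, given an ultrafilter $\mathcal{U}$ on $Z$, let $\Psi(\mathcal{U})$ be the upward closure of $\mathcal{U}$ in $\Cal{D}(\kk)$, i.e. $\{D\in\Cal{D}(\kk):A\subseteq D\text{ for some }A\in\mathcal{U}\}$; this is routinely a discrete $z$-filter containing $Z$.

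The crux is to check that $\Psi(\mathcal{U})$ is maximal, i.e. actually a discrete $z$-ultrafilter, so that $\Psi$ lands in $\overline{Z}$. If $\Cal{G}\supseteq\Psi(\mathcal{U})$ is a discrete $z$-ultrafilter, then $Z\in\Cal{G}$, so $\Phi(\Cal{G})$ is an ultrafilter on $Z$ containing $\mathcal{U}$ and hence equal to $\mathcal{U}$; but then for each $D\in\Cal{G}$ we get $D\cap Z\in\Cal{G}$, so $D\cap Z\in\Phi(\Cal{G})=\mathcal{U}$ and therefore $D\in\Psi(\mathcal{U})$, giving $\Cal{G}=\Psi(\mathcal{U})$. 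The same bookkeeping (trace, then upward closure) shows $\Phi\circ\Psi=\mathrm{id}_{\beta Z}$ and $\Psi\circ\Phi=\mathrm{id}_{\overline{Z}}$, so $\Phi$ is a bijection. I expect this maximality/reconstruction step to be the only point needing genuine care; the rest is formal manipulation of filters.

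Finally, for the topology: by Lemma~\ref{intp} the basic closed subsets of $\overline{Z}$ are the sets $\overline{D}\cap\overline{Z}=\overline{D\cap Z}$ with $D\in\Cal{D}(\kk)$, and as $D$ ranges over $\Cal{D}(\kk)$ the intersection $D\cap Z$ ranges over all of $\mathcal{P}(Z)$ (take $D=A$). For $A\subseteq Z$ one has $\overline{A}\cap\overline{Z}=\{\Cal{F}\in\overline{Z}:A\in\Cal{F}\}$, and since $A\in\Cal{F}\iff A\in\Phi(\Cal{F})$ this set is carried by the bijection $\Phi$ exactly onto $\widehat{A}$. Hence both $\Phi$ and $\Phi^{-1}$ send basic closed sets to basic closed sets, so $\Phi$ is a homeomorphism. (Alternatively, having shown $\Phi$ continuous and bijective, one may invoke compactness of $\overline{Z}$ together with the Hausdorffness of $\beta Z$.)
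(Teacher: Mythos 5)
Your proof is correct and follows essentially the same route as the paper's: both identify $\mathrm{cl}_{\theta\kk}(\eta_\kk(Z))=\overline{Z}$ with $\beta Z$ via the trace map $\Cal{F}\mapsto\{A\subseteq Z: A\in\Cal{F}\}$ (the paper's $A^p$) and then match the bases of closed sets. You additionally spell out the bijectivity via the explicit inverse (upward closure) and the maximality check, steps the paper dismisses as ``clear,'' and you rightly note that the formal member $\kk\in\Cal{D}(\kk)$ must be excluded.
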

\begin{proof}
	The non-trivial case is when $Z$ is infinite. By definition, $\beta Z$ consists of all $z$-ultrafilters of zero-sets of $Z$. For $p \in \text{cl}_{\theta\kk}(\eta_{\kk}(Z))$, let $A^p : = \{Z_1 \in \Cal{D}^p : Z_1 \subseteq Z\}.$ It is clear that $A^p$ is a $z$-ultrafilter on $Z$.
	
	Now, consider the map $\Phi_1 : \text{cl}_{\theta\kk}(\eta_{\kk}(Z)) \mapsto \beta Z$, given by 
	\[
	\Phi_1 (\Cal{D}^p) = A^p.
	\]
	Clearly, $\Phi_1$ is bijective. Again, any basic closed subset of $\text{cl}_{\theta\kk}(\eta_{\kk}(Z))$ is of the form $\text{cl}_{\theta\kk}(\eta_{\kk}(Z_1))$, where $Z_1 \subseteq Z$. That means, 
	\begin{align*}
	\Phi_1 (\text{cl}_{\theta\kk}(\eta_{\kk}(Z_1))) & = \{ A^p : p \in \text{cl}_{\theta\kk}(\eta_{\kk}(Z_1))\}\\
	& = \{A^p : Z_1 \in A^p\} = \text{cl}_{\beta Z} Z_1,
	\end{align*}
	which is a basic closed set in $\beta Z$, making $\Phi_1$ a closed map. One can similarly check that $\Phi_1$ is continuous, proving the claim.
\end{proof}

We know from the property of Stone-\v{C}ech compactification that for a first countable Tychonoff topological space $X$ no point of $\beta X\setminus X$ is $G_{\delta}$ in $\beta X$ (see ~\cite{GJ}, Chapter 9). It follows that no point of $\beta\mathbb{K}\setminus \mathbb{K}$ is $G_{\delta}$ in $\beta \mathbb{K}$. Now, we prove that no point of $(\beta\mathbb{K}\setminus Q(\mathbb{K}))\setminus \mathbb{K}$ is $G_{\delta}$ in $\beta\mathbb{K}\setminus Q(\mathbb{K})$.

For the following proposition, for notational convenience, let 
%$Q(\kk)$ denote the set of all far points of $\beta\kk$. Also, let 
$\kk^* = \beta \kk \setminus \kk, \kk^*_Q = (\beta\kk \setminus  Q(\kk)) \setminus \kk, \beta_Q(\kk) = \beta\kk \setminus Q(\kk)$.
Then, we have the following proposition, which is a variant of Lemma 9.4, Chapter 9 of ~\cite{GJ}:
\begin{proposition}
	Let $E \subset \beta_Q\kk$, and suppose that $Z$ is a zero set in $\beta_Q\kk$ that meets $cl_{\beta_Q\kk}E$ but not $\kk \cup E$. Then $E$ contains a copy $N$ of $\NN$ and $Z$ contains a copy of $\beta N \setminus N$.
	\end{proposition}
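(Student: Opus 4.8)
The plan is to adapt the argument of Lemma 9.4, Chapter 9 of ~\cite{GJ} to the non-compact space $\beta_Q\kk$. Write $Z = Z(h)$ for some $h \in C(\beta_Q\kk)$ with $0 \le h \le 1$, and fix a point $p \in Z \cap \text{cl}_{\beta_Q\kk}E$; by hypothesis $h(p) = 0$ and $p \notin \kk \cup E$. Each $V_n := \{h < 1/n\}$ is a cozero-set, hence an open neighbourhood of $p$, and $\bigcap_n V_n = Z$. Since $\beta_Q\kk$ is a subspace of the compact Hausdorff space $\beta\kk$ it is Tychonoff, so complete regularity is available throughout.

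Next I would construct inductively points $x_n \in E$ together with cozero neighbourhoods $W_n$ of $x_n$ in $\beta_Q\kk$ so that: (i) $h(x_n) < 1/n$; and (ii) $p \notin \overline{W_n}$ and the closures $\overline{W_n}$ are pairwise disjoint. At stage $n$ the set $F_{n-1} := \overline{W_1} \cup \dots \cup \overline{W_{n-1}}$ is closed and misses $p$, so $V_n \setminus F_{n-1}$ is an open neighbourhood of $p$; since $p \in \text{cl}_{\beta_Q\kk}E$ and $p \notin E$, it contains a point $x_n \in E$ with $x_n \ne p$, and complete regularity supplies a cozero neighbourhood $W_n$ of $x_n$ with $\overline{W_n} \subseteq V_n \setminus F_{n-1}$ and $p \notin \overline{W_n}$. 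Put $N := \{x_n : n \in \NN\}$; the $W_n$ separate the $x_n$, so $N$ is relatively discrete in $\beta_Q\kk$, i.e. a copy of $\NN$, and $N \subseteq E$.

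The crux — and the place where the non-compactness (and possible non-normality) of $\beta_Q\kk$ must be handled, rather than relying on the compactness used in the classical setting — is to refine the construction so that $N$ is $C^*$-embedded in $\beta_Q\kk$ with compact closure there; equivalently, so that $\text{cl}_{\beta\kk}N$ avoids $Q(\kk)$ and is a copy of $\beta N = \beta\NN$. The tool I would use is that every point of $\beta_Q\kk$, being by definition not a far point, lies in $\text{cl}_{\beta\kk}D$ for some $D \in \Cal{D}(\kk)$, and that for an infinite such $D$ the set $\text{cl}_{\beta\kk}D$ is itself a copy of $\beta\NN$ contained in $\beta_Q\kk$ (it is the Stone-\v{C}ech compactification of the closed discrete — hence $C^*$-embedded, $\kk$ being a metric space — set $D$, and no limit point of $D$ can be a far point). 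So I would choose, at stage $n$, infinite closed discrete sets $D_n \in \Cal{D}(\kk)$ with $x_n \in \text{cl}_{\beta\kk}D_n$, taken far enough apart that $D := \bigcup_n D_n$ is again in $\Cal{D}(\kk)$; then $N \subseteq \text{cl}_{\beta\kk}D \cong \beta\NN$, and a countable relatively discrete subset of a copy of $\beta\NN$ is automatically $C^*$-embedded, so $\text{cl}_{\beta\kk}N \cong \beta N$ and $\text{cl}_{\beta\kk}N \subseteq \text{cl}_{\beta\kk}D \subseteq \beta_Q\kk$. The hard part is making this choice of $D_n$ compatible with (i)–(ii) at the same stage: this is where one has to argue carefully, using that $p$ and all the $x_n$ sit in $\beta_Q\kk$ and are therefore non-far.

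With $N$ so chosen, $\text{cl}_{\beta_Q\kk}N = \text{cl}_{\beta\kk}N$ is a copy of $\beta N$, so $T := \text{cl}_{\beta_Q\kk}N \setminus N$ is a copy of $\beta N \setminus N$. It remains to see $T \subseteq Z$: given $q \in T$, under the identification $\text{cl}_{\beta_Q\kk}N \cong \beta\NN$ the point $q$ is the limit of a free ultrafilter $\mathcal{U}$ on $N$, so $q = \lim_{\mathcal{U}} x_n$ in $\beta_Q\kk$, and continuity of $h$ gives $h(q) = \lim_{\mathcal{U}} h(x_n) = 0$ since $h(x_n) < 1/n \to 0$; hence $q \in Z(h) = Z$. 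Thus $Z$ contains the copy $T$ of $\beta N \setminus N$ while $N \subseteq E$, which is the assertion.
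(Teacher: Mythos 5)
Your outline is plausible, but the proof as written has a genuine gap precisely at the step you yourself flag as ``the hard part.'' Conditions (i)--(ii) give only that $N=\{x_n\}$ is relatively discrete in $\beta_Q\kk$; they do not give $C^*$-embedding, and they do not prevent $\mbox{cl}_{\beta\kk}N$ from meeting $Q(\kk)$ (where $h$ is not even defined, so your final ultrafilter-limit argument cannot be applied to such cluster points, and $\mbox{cl}_{\beta_Q\kk}N\setminus N$ would then fail to be a full copy of $\beta N\setminus N$). Everything therefore hinges on producing closed discrete sets $D_n\subseteq\kk$ with $x_n\in \mbox{cl}_{\beta\kk}D_n$ and $\bigcup_n D_n\in\Cal{D}(\kk)$, and this is exactly what you do not do: ``taken far enough apart'' is not available, since each $D_n$ is constrained by the point $x_n$, and a countable union of closed discrete subsets of $\kk$ is in general not closed discrete. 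The step is in fact fillable --- for $x_n\in\beta\kk\setminus\kk$ one may replace $D_n$ by $D_n\setminus B(0,R_n)$ with $R_n\to\infty$ without losing $x_n$ from the closure, and the $x_n$ lying in $\kk$ automatically form a closed discrete subset of $\kk$ because any cluster point of theirs in $\kk$ would satisfy $h=0$ and so lie in $Z\cap\kk=\emptyset$ --- but none of this appears in your write-up, and without it the argument is incomplete.

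The paper sidesteps the issue by a different device: it sets $Y=\beta_Q\kk\setminus Z$, notes that $\kk\subseteq Y\subseteq\beta\kk$ forces $\beta Y=\beta\kk$, and applies the standard Gillman--Jerison fact that a subset on which the continuous function $1/f$ is unbounded contains a copy $N$ of $\NN$ that is $C$-embedded in $Y$. Being $C$-embedded in $Y$ yields $C^*$-embedding in $\beta Y=\beta\kk$ for free, so $\mbox{cl}_{\beta\kk}N=\beta N$ with no hand-built separating neighbourhoods and no surgery on discrete subsets of $\kk$. To salvage your construction, either complete the $D_n$ argument along the lines indicated above, or replace steps (i)--(ii) by the $C$-embedding lemma, which is the route the paper takes.
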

	\begin{proof}
		Let $Z = Z_f$, and $Y = \beta_Q\kk \setminus Z$. Since $Z \cap \kk = \emptyset$, we have $Y \supset \kk$, and therefore $\beta Y = \beta \kk$. Also, $Y \supset E$. In $C(Y)$, $h = (f|_Y)^{-1}$ exists, and because $Z$ meets $cl_{\beta_Q\kk}E$, $h$ is unbounded on $E$. Then $E$ contains a copy $N$ of $\NN$ that is $\kk$-embedded in $Y$ and on which $h$ goes to infinity. This means that $N$ is $\kk^*$-embedded in $\beta Y = \beta \kk$, which in turn means that $cl_{\beta(\beta_Q\kk)} N = \beta N\Rightarrow cl_{\beta \kk} N = \beta N $. Therefore $\beta N$ contains no far points of $\beta \kk$, as it is the closure of a discrete subset $N$ of $\kk$. Hence $\beta N \subseteq \beta\kk \setminus  Q(\kk)$.
		
		Again, $cl_{\beta_Q\kk} N \setminus N \subset Z \implies \beta N \subset Z$.
		\end{proof}
		\begin{remark}\label{3.14}\hfill
			\begin{enumerate}
				\item If we assume $E = \kk$, then it tells us that every non-empty zero set in $\kk^*_Q$, if disjoint from $\kk$, contains a copy of $\beta \NN$ and hence its cardinality is at least $2^c$, where $c$, as usual denotes the cardinality of $\RR$.
				\item Since every compact $G_\delta$-set contains a zero set in a completely regular space, therefore no point of $\kk_Q^*$ is a $G_\delta$-point of $\beta_Q\kk$. 
				\end{enumerate}
				\end{remark}

				It is clear that no point $p$ of $\eta_\kk(\kk)$ is a $G_\delta$ point, because otherwise, $\eta_\kk(\kk) \setminus \{p\}$ is an uncountable set which is a countable union of closed sets in $\eta_\kk(\kk)$, which cannot happen. More non-trivial is the following
				\begin{proposition}\label{sesh}
					No point of $\theta\kk \setminus \eta_\kk(\kk)$ is a $G_\delta$-point of $\theta\kk$.
					\end{proposition}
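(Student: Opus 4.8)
The plan is to proceed by contradiction: a hypothetical $G_\delta$ representation of $\{p\}$ would force a countable cover of $\kk$ by closed discrete sets, which is impossible on cardinality grounds.

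So suppose some $p \in \theta\kk \setminus \eta_\kk(\kk)$ were a $G_\delta$-point, say $\{p\} = \bigcap_{n \in \NN} U_n$ with each $U_n$ open in $\theta\kk$. The first step is to shrink each $U_n$ to a \emph{basic} open neighbourhood of $p$. Since the sets $\overline{Z}$, $Z \in \Cal{D}(\kk)$, form a base for the closed sets of $\theta\kk$ (Definition \ref{thetake}), the closed set $\theta\kk \setminus U_n$ is an intersection of such sets; as $p$ does not lie in $\theta\kk \setminus U_n$, some member $\overline{Z_n}$ of this intersection omits $p$, and then $Z_n \in \Cal{D}(\kk)$, $Z_n \notin \Cal{D}^p$, and $p \in \theta\kk \setminus \overline{Z_n} \subseteq U_n$. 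Hence $\{p\} = \bigcap_n (\theta\kk \setminus \overline{Z_n})$, equivalently $\theta\kk \setminus \{p\} = \bigcup_n \overline{Z_n}$.

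The second step uses the embedding $\eta_\kk$. Because $p \notin \eta_\kk(\kk)$, we have $\eta_\kk(\kk) \subseteq \theta\kk \setminus \{p\} = \bigcup_n \overline{Z_n}$. Intersecting both sides with $\eta_\kk(\kk)$ and applying Lemma \ref{3.9}(a), which identifies $\overline{Z_n} \cap \eta_\kk(\kk)$ with $\eta_\kk(Z_n)$, we obtain $\eta_\kk(\kk) = \bigcup_n \eta_\kk(Z_n)$; since $\eta_\kk$ is injective this yields $\kk = \bigcup_n Z_n$. But $\kk$ ($= \RR$ or $\CC$) is second countable, so every closed discrete set $Z_n$ is countable, and therefore $\kk$ would be a countable union of countable sets --- contradicting the uncountability of $\kk$. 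This contradiction proves the proposition.

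I do not anticipate a genuine obstacle here; the only step that needs a moment's thought is the reduction of $U_n$ to a basic open neighbourhood $\theta\kk \setminus \overline{Z_n}$ with $p \notin \overline{Z_n}$, and that is immediate from the $\overline{Z}$ forming a base for the closed sets. (The same argument applies verbatim to any point of $\theta\kk$, so it also recovers the observation made just before the proposition that no point of $\eta_\kk(\kk)$ is a $G_\delta$-point.) All the content of the proof sits in the cardinality count at the end.
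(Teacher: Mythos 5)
Your proof is correct, and it takes a genuinely different --- and more elementary --- route than the paper's. The paper deduces the proposition from the corresponding statement about $\beta\kk\setminus Q(\kk)$: it pulls a hypothetical $G_\delta$ representation of $\{p\}$ back along the bijective continuous map $\Psi$ of Theorem \ref{bci} and then invokes the second item of Remark \ref{3.14}, which in turn rests on the preceding proposition (the variant of Lemma 9.4 of \cite{GJ}) about zero sets of $\beta\kk\setminus Q(\kk)$ containing copies of $\beta\NN$. Your argument bypasses all of that machinery: it works entirely inside $\theta\kk$, using only the definition of the topology (Definition \ref{thetake}), Lemma \ref{3.9}(a), and the fact that a discrete subset of the second countable space $\kk$ is countable. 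Two small points are worth making explicit. First, the reduction of each $U_n$ to a basic open neighbourhood uses that the family $\{\overline{Z} : Z\in\Cal{D}(\kk)\}$ is closed under finite unions, which holds because discrete $z$-ultrafilters are prime, so $\overline{Z_1}\cup\overline{Z_2}=\overline{Z_1\cup Z_2}$; without this, a closed set need only be an intersection of \emph{finite unions} of basic closed sets. Second, since $\Cal{D}(\kk)$ contains the special member $\kk$ and $\overline{\kk}=\theta\kk$, the condition $p\notin\overline{Z_n}$ guarantees $Z_n\neq\kk$, so each $Z_n$ really is a countable set. With those observations your cardinality count goes through, and in fact your argument proves the stronger statement that $\theta\kk$ has no $G_\delta$-points whatsoever, subsuming the remark made just before the proposition. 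What the paper's route buys instead is the auxiliary information about $\beta\kk\setminus Q(\kk)$ --- in particular the copies of $\beta\NN$ inside its zero sets and the $2^c$ cardinality bound --- which is of independent interest but is not needed for this proposition.
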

					\begin{proof} The above claim follows from contradiction. Let there be $p \in \theta\kk \setminus \kk$ such that $p$ is a $G_\delta$-point of $\theta\kk$. Then there exists a countable collection of open neighbourhoods $V_i$ of $p$ in $\theta\kk$ such that $\bigcap^\infty_{i = 1} V_i = \{p\}.$ Since $\Psi$ is bijective and continuous, we have
					\[
						\Psi^{-1}(\bigcap_{i = 1}^\infty V_i) = \Psi^{-1}(\{p\}) \implies \bigcap_ {i = 1}^\infty\Psi^{-1}(V_i) = \Psi^{-1}(p),
						\]
						which means that $\Psi^{-1}(p)$ is a $G_\delta$-point of $\beta_Q\kk$. But no point of $\beta_Q\kk\setminus\kk$ is a $G_\delta$-point of $\beta_Q\kk$ (Remark \ref{3.14} above), which means that $\Psi^{-1}(p) \in \kk$. This is not possible as $\Psi$ maps $\kk \subset \beta_Q\kk$ to $\eta_\kk(\kk)\subset \theta\kk$, and therefore, it maps $\beta_Q\kk\setminus\kk$ to $\theta\kk\setminus \eta_\kk(\kk)$.
						\end{proof}  

\bibliographystyle{plain}

\begin{thebibliography}{11}
%@book{gillman2013rings,
 % title={Rings of continuous functions},
  %author={Gillman, Leonard and Jerison, Meyer},
  %year={2013},
  %publisher={Springer Science \& Business Media}
%}
\bibitem[AD]{AD} S. Acharyya and D. De, {\em The existence of far points in the growth of the Sorgenfrey line}, Southeast Asian Bull. Math., {\bf 33} (2009), no. 5, 821 - 826.
\bibitem[A]{A} N. Alling, {\em Global ideal theory of meromorphic function fields,} Trans. Amer. Math. Soc., {\bf 256} (1979), 241- 266.
\bibitem[C]{C} P. Clark (http://mathoverflow.net/users/1149/pete-l-clark), {\em Questions about spectra of rings of continuous functions}, URL (version: 2011-01-28): http://mathoverflow.net/q/53647.
\bibitem[Co]{Co} J. Conway, {\em Functions of one complex variable}, GTM 11, Springer-Verlag, New York, 1978.
\bibitem[FG]{FG} N. Fine and L. Gillman, {\em Remote points in $\beta \RR$}, Proc. Amer. Math. Soc., {\bf 13} (1962), 29 - 36.
\bibitem[GJ]{GJ} L. Gillman and M. Jerison, {\em Rings of continuous functions}, GTM 43, Springer-Verlag, New York-Heidelberg, 1976.
\bibitem[G]{G} M. Golasinski, {\em Henriksen's contributions to residue class rings of analytic and entire functions,} Topology Appl., {\bf 158} (2011), no. 14, 1756 - 1761.
\bibitem[GH]{GH} M. Golasinski and M. Henriksen, {\em Residue class rings of real-analytic and entire functions}, Colloq. Math., {\bf 104} (2006), no. 1, 85 - 97.
\bibitem[Gr]{Gr} H. Grauert, {\em On Levi's problem and the imbedding of real-analytic manifolds,} Ann. of Math. (2), {\bf 68} (1958), 460 - 472.
\bibitem[H]{H} O. Helmer, {\em Divisibility properties of integral functions,} Duke Math. J., {\bf 6} (1940), no. 2, 345 - 356.
\bibitem[He]{He} M. Henriksen, {\em On the ideal structure of the ring of entire functions,} Pacific J. Math., {\bf 2} (1952), 179 - 184.
\bibitem[He1]{He1} M. Henriksen, {\em On the prime ideals of the ring of entire functions,} Pacific J. Math., {\bf 3} (1953), 711 - 720.
\bibitem[J]{J} C. Jensen, {\em Some curiosities of rings of analytic functions,} J. Pure Appl. Algebra, {\bf 38} (1985), 277 - 283.
%\bibitem[K]{K} A. Knutson (http://mathoverflow.net/users/391/allen-knutson), {\em Analytic contraction of the Stone-\v{C}ech compactification of $\mathbb C$,} URL (version: 2014-01-07): http://mathoverflow.net/q/153852.
\bibitem[Ko]{Ko}  C. Kohls, {\em Prime z-filters on completely regular spaces}, Trans. Amer. Math. Soc., {\bf 120} (1965), 236 - 246. 
\bibitem[Kr]{Kr} I. Kra, {\em On the ring of holomorphic functions on an open Riemann surface}, Trans. Amer. Math. Soc., {\bf 132} (1968), 231 - 244.
\bibitem[M]{M} M. Mandelker, {\em Prime $z$-ideal structure of $C$(R),} Fund. Math., {\bf 63} (1968), 145 - 166. 
\bibitem[N]{N} J-I. Nagata, {\em Modern General Topology}, Second Edition., North-Holland Publishing Co., Amsterdam, 1985. 
\bibitem[P]{P} H. Pham, {\em Uncountable families of prime $z$-ideals in $C_0(\mathbb{R})$}, Bull. Lond. Math. Soc., {\bf 41} (2009), no. 2, 354 - 366.
\bibitem[P1]{P1} H. Pham, {\em The kernels of radical homomorphisms and intersections of prime ideals,} Trans. Amer. Math. Soc., {\bf 360} (2008), 1057 - 1088.
\bibitem[R]{R} H. Royden, {\em Rings of analytic and meromorphic functions}, Trans. Amer. Math. Soc., {\bf 83} (1956), 269 - 276.
\bibitem[W]{W} R. Walker, {\em The Stone-\v{C}ech compactification}, Ergebnisse der Mathematik und ihrer Grenzgebiete, Band 83. Springer-Verlag, New York-Berlin, 1974.
%\bibitem[We]{We}J. Weigandt (http://mathoverflow.net/users/4872/jamie-weigandt), {\em What is the spectrum of the ring of entire functions?}, URL (version: 2010-07-30): http://mathoverflow.net/q/33945.
\end{thebibliography}
\def\noopsort#1{}
%\begin{thebibliography}{11}

\end{document}